\newcommand{\N}{\mathcal{N}}
\newcommand{\QA}{\mathfrak{A}}
\newcommand{\K}{\mathcal{K}}
\newcommand{\A}{\mathcal{A}}
\newcommand{\I}{\mathcal{I}}
\newcommand{\J}{\mathcal{J}}
\newcommand{\QN}{\mathcal{QN}}
\newcommand{\FI}{\overline{\mathcal{F}(X)}^{\Vert\cdot\Vert_\mathcal I}}
\newcommand{\QAQN}{\mathfrak{A}_X^{\mathcal{QN}_p}}
\newcommand{\QASK}{\mathfrak{A}_X^{\mathcal{SK}_p}}
\newcommand{\SK}{\mathcal{SK}}
\newcommand{\F}{\mathcal{F}}
\newcommand{\V}{\Vert\cdot\Vert}
\newcommand{\VI}{\Vert\cdot\Vert_\mathcal I}
\theoremstyle{definition}
\newtheorem{thm}{Theorem}[section]
\newtheorem{prop}[thm]{Proposition}
\newtheorem{lem}[thm]{Lemma}
\newtheorem{qu}[thm]{Question}
\newtheorem{remark}[thm]{Remark}
\newtheorem{remarks}[thm]{Remarks}
\newtheorem{ex}[thm]{Example}
\newtheorem{fact}[thm]{Fact}
\theoremstyle{plain}
\numberwithin{equation}{section}
\title[Quotient algebras of Banach operator ideals]{Quotient algebras of Banach operator ideals related to non-classical approximation properties}
\author[Henrik Wirzenius]{Henrik Wirzenius}
\address{Henrik Wirzenius: Department of Mathematics and Statistics, Box 68, Pietari Kalmin katu 5,
FI-00014 University of Helsinki, Finland}
\email{henrik.wirzenius@helsinki.fi}
\subjclass[2010]{46B28, 47L20, 47B10}
\keywords{Quotient algebra, Banach operator ideals, approximation properties, quasi $p$-nuclear operators, Sinha-Karn $p$-compact operators}
\thanks{This work was supported by the Magnus Ehrnrooth Foundation and the Swedish Cultural Foundation in Finland}
\begin{document}
\begin{abstract}
We investigate the quotient algebra $\mathfrak{A}_X^{\mathcal I}:=\mathcal I(X)/\overline{\mathcal F(X)}^{||\cdot||_{\mathcal I}}$ for Banach operator ideals $\mathcal I$ contained in the ideal of the compact operators, where $X$ is a Banach space that fails the $\mathcal I$-approximation property. The main results concern the nilpotent quotient algebras $\mathfrak A_X^{\mathcal{QN}_p}$ and $\mathfrak A_X^{\mathcal{SK}_p}$ for the quasi $p$-nuclear operators $\mathcal{QN}_p$ and the Sinha-Karn $p$-compact operators $\mathcal{SK}_p$. The results include the following: (i) if $X$ has cotype 2, then $\mathfrak A_X^{\mathcal{QN}_p}=\{0\}$ for every $p\ge 1$; (ii) if $X^*$ has cotype 2, then $\mathfrak A_X^{\mathcal{SK}_p}=\{0\}$ for every $p\ge 1$; (iii) the exact upper bound of the index of nilpotency of $\mathfrak A_X^{\mathcal{QN}_p}$ and $\mathfrak A_X^{\mathcal{SK}_p}$ for $p\neq 2$ is $\max\{2,\left \lceil p/2 \right \rceil\}$, where $\left \lceil p/2 \right \rceil$ denotes the smallest $n\in\mathbb N$ such that $n\ge p/2$; (iv) for every $p>2$ there is a closed subspace $X\subset c_0$ such that both $\mathfrak A_X^{\mathcal{QN}_p}$ and $\mathfrak A_X^{\mathcal{SK}_p}$ contain a countably infinite decreasing chain of closed ideals. In addition, our methods yield a closed subspace $X\subset c_0$ such that the compact-by-approximable algebra $\mathfrak A_X=\mathcal K(X)/\mathcal A(X)$ contains two incomparable countably infinite chains of nilpotent closed ideals. 
\end{abstract}

\maketitle

\section{Introduction}\label{section1}
Let $X$ be a Banach space and let $\I=(\I,\VI)$ be a Banach operator ideal. The closure $\FI$ is a $\VI$-closed ideal of $\I(X)$, where $\F(X)$ consists of the bounded finite-rank operators $X\to X$, and consequently, the quotient algebra $\QA_X^{\I}:=\I(X)/\overline{\F(X)}^{\VI}$ is a Banach algebra when equipped with the quotient norm 
\[||T+\overline{\F(X)}^{\VI}||=\inf\big\{||T-S||_\I\mid S\in\overline{\F(X)}^{\VI}\big\},\quad T\in\I(X).\]
In the present paper we initiate a study of the quotient algebra $\QA_X^{\I}$ for Banach operator ideals $\I=(\I,\VI)$ contained in the ideal of the compact operators $\K=(\K,\V)$. For such Banach operator ideals $\I$, the non-unital quotient algebra $\QA_X^\I$ is radical, and it can only be non-trivial within the class of Banach spaces $X$ failing the $\I$-approximation property. Here the Banach space $X$ is said to have the $\I$-approximation property ($\I$-AP in short) if $\F(Y,X)$ is $\VI$-dense in $\I(Y,X)$ for every Banach space $Y$. The $\I$-AP has recently been studied, under varying terminology, for different Banach operator ideals $\I=(\I,\VI)$, see e.g. \cite{SK02, DPS10_2, CK10, GLT12, LT13, LT17, Kim19, Kim20} and their references.

It is well known that the $\K$-AP coincides with the classical approximation property (AP in short) of Grothendieck. Moreover, the $\I$-AP is typically a weaker property than the AP and it is therefore in general difficult to construct or to identify examples of Banach spaces failing the $\I$-AP. For $\I=\K$ one obtains the quotient algebra $\QA_X:=\QA_X^\K$ of compact-by-approximable operators, that is, $\QA_X=\K(X)/\A(X)$ where the uniform closure $\A(X)=\overline{\F(X)}$ denotes the class of approximable operators $X\to X$. The compact-by-approximable algebra $\QA_X$  has been systematically studied in \cite{TW21, TW22} and one of the objectives of this paper is to exhibit different phenomena for the class of radical quotient algebras $\QA_X^\I$ which are not known to manifest themselves for the compact-by-approximable algebra $\QA_X$ for any Banach space $X$. 

The primary focus is on the quotient algebra $\QA_X^{\QN_p}=\QN_p(X)/\overline{\F(X)}^{\V_{\QN_p}}$ for the Banach operator ideal $\QN_p=(\QN_p,\V_{\QN_p})$ of quasi $p$-nuclear operators \cite{PP69}, and the quotient algebra $\QA_X^{\SK_p}=\SK_p(X)/\overline{\F(X)}^{\V_{\SK_p}}$ for the Banach operator ideal $\SK_p=(\SK_p,\V_{\SK_p})$ of Sinha-Karn $p$-compact operators \cite{SK02}, where $p\in[1,\infty)$ is a real number. In recent years, the class $\SK_p$ for $p\in[1,\infty)$ has arguably been the most studied class of non-classical Banach operator ideals with respect to the $\I$-AP, see e.g. \cite{DPS10_2, GLT12, Oja12, Kim15}. It is known that in an isometric sense both $\QN_p=\SK_p^{dual}$ and $\SK_p=\QN_p^{dual}$, where $\I^{dual}$ denotes the dual ideal of a Banach operator ideal $\I$, and therefore a simultaneous study of the quotient algebras $\QAQN$ and $\QASK$ is relevant. 

It follows from a classical result of Persson and Pietsch \cite[Satz 48]{PP69} that both quotient algebras $\QAQN$ and $\QASK$ are nilpotent for any given Banach space $X$. Moreover, it turns out that $\QAQN$ and $\QASK$ are trivial for large classes of Banach spaces $X$, which include spaces without the AP. In fact, in Section \ref{section3} we show that if $X$ has cotype 2 (respectively, if $X^*$ has cotype 2), then $\QAQN=\{0\}$ (respectively, $\QASK=\{0\}$) for every $1\le p<\infty$. We also provide for every $1\le p<2$ examples of Banach spaces $X$ that fail the $\QN_p$-AP, but for which $\QAQN=\{0\}$. Analogously, for every $2<p<\infty$ there are  Banach spaces $X$ that fail the $\SK_p$-AP for which $\QASK=\{0\}$. Recall here that it is not known whether there exists a Banach space $X$ that fails the AP for which $\QA_X=\{0\}$.

In Section \ref{section4} we discuss the nilpotency of the quotient algebras $\QAQN$ and $\QASK$. The aforementioned result of Persson and Pietsch yields the upper bound $\max\{2,\left \lceil p/2 \right \rceil\}$ for the index (of nilpotency) of $\QAQN$ and $\QASK$ for any Banach space $X$. Here $\left \lceil p/2\right \rceil=\min\{n\in\mathbb N\mid n\ge p/2\}$ and the index of a nilpotent algebra $A$ is the minimum of all natural numbers $n\in\mathbb N$ for which the product $a_1\cdots a_n=0$ whenever $a_1,\ldots, a_n\in A$. Expanding on a sophisticated factorisation method due to  Reinov \cite{Reinov82}, we establish that the upper bound $\max\{2,\left \lceil p/2 \right \rceil\}$ is exact in the sense that for every $p\neq 2$ there is a closed subspace $X\subset c_0$ for which the index of both quotient algebras $\QAQN$ and $\QASK$ equals $\max\{2,\left \lceil p/2 \right \rceil\}$.

In Section \ref{section5} we provide examples of Banach spaces $X$ for which the quotient algebras $\QAQN$ and $\QASK$ carry non-trivial closed ideals of a specific natural form. By a closed ideal of a given Banach algebra $A=(A,\V_A)$ we always mean a $\V_A$-closed two-sided ideal of $A$. Our methods also provide new examples of closed ideals of the compact-by-approximable algebra $\QA_X$ that improve upon a result obtained in \cite{TW22}. Namely, our main theorem (Theorem \ref{2083}) exhibits a closed subspace $X\subset c_0$ for which $\QA_X$ carries two incomparable countably infinite  chains of nilpotent closed ideals. At the same time, we obtain a countably infinite chain of closed ideals of the quotient algebras $\QAQN$ and $\QASK$, respectively, for any given $2<p<\infty$.

The non-trivial closed ideals found in Section \ref{section5} arise from very natural closed ideals of $\QN_p(X)$ and $\SK_p(X)$, respectively, which are all in fact algebraic ideals of the bounded operators $\mathcal L(X)$.  
In Section \ref{section6} we exhibit a particular construction of a Banach space $Z$ such that $\I(Z)$ carries an uncountable family of closed ideals that are not ideals of $\mathcal L(Z)$. Here $\I=(\I,\VI)$ can be any Banach operator ideal contained in the compact operators $\K=(\K,\V)$ for which the $\I$-AP fails a certain duality property. We also discuss and draw attention to some duality problems related to the quotient algebras $\QAQN$ and $\QASK$.

\section{Preliminary results and examples}\label{section2}
In this section we recall relevant definitions and concepts related to Banach operator ideals $\I=(\I,\VI)$ and the corresponding $\I$-approximation property. Moreover, we show that the quotient algebra $\QA_X^\I$ is a radical Banach algebra whenever the Banach operator ideal $\I$ is contained in the ideal of the compact operators $\K=(\K,\V)$. We also briefly indicate the general connection between the $\I$-approximation property and the quotient algebra $\QA_X^\I$, and describe a basic construction of a Banach space $Z$ for which dim$\,\QA_Z^\I=\infty$ for any non-approximative Banach operator ideal $\I=(\I,\VI)$. We refer to  \cite{Pietsch80}, \cite{DF93} and the survey \cite{DJP} for comprehensive sources on Banach operator ideals. For unexplained notions in Banach space theory we refer to \cite{DJT95} and \cite{AK06}.
Throughout the paper we consider Banach spaces over the same scalar field $\mathbb K=\mathbb R$ or $\mathbb K=\mathbb C$. 
\smallskip

Let $\mathcal L(X,Y)$ denote the space of all bounded linear operators $X\to Y$. We use the term \emph{Banach operator ideal}  for a complete normed operator ideal in the sense of Pietsch \cite{Pietsch80}. Recall that a Banach operator ideal $\I=(\I,\VI)$ assigns for each pair of Banach spaces $(X,Y)$ a linear subspace $\I(X,Y)$ of $\mathcal L(X,Y)$, and each component $\I(X,Y)$ is equipped with a complete norm $\VI$ so that the following properties hold:
\begin{enumerate}
\item[(BOI1)] $x^*\otimes y\in\mathcal I(X,Y)$ and $||x^*\otimes y||_\I=||x^*||\,||y||$ for all $x^*\in X^*$ and $y\in Y$.
\item[(BOI2)] for all Banach spaces $Z$ and $W$,  and all operators $A\in\mathcal L(Z,X)$, $B\in\mathcal L(Y,W)$ and $S\in\I(X,Y)$ one has $BSA\in \I(Z,W)$. In addition, $||BSA||_\I\leq ||B||\, ||A||\, ||S||_\I$.
\item[(BOI3)] $||S||\leq ||S||_\I$ for all $S\in\I(X,Y)$.
\end{enumerate}
Above $x^*\otimes y$ denotes the bounded rank-one operator $x\mapsto x^*(x)y$ and $\V$ is the uniform operator norm. The first part of (BOI2) will be referred to as the \emph{operator ideal property}. 

For two Banach operator ideals $\I=(\I,\VI)$ and $\J=(\J,\V_{\J})$ we write
\[\I\subset\mathcal J\] 
if $\I(X,Y)\subset \mathcal J(X,Y)$ for all Banach spaces $X$ and $Y$, and $||\cdot||_{\mathcal J}\leq ||\cdot||_\I$. If in turn $\I(X,Y)=\J(X,Y)$ for all Banach spaces $X$ and $Y$, and $||\cdot||_\J=||\cdot||_\I$, we write $\I=\J$.
\smallskip

Let $\I=(\I,\VI)$ be an arbitrary Banach operator ideal and let $X$ be a Banach space.  We say that the quotient algebra $\QA_X^\I$ is radical if the spectrum
\begin{equation}\label{radical}
\sigma(T+\overline{\F(X)}^{\VI}):=\{\lambda\in\mathbb K\mid \lambda \mathbb 1-(T+\overline{\F(X)}^{\VI})\text{ is not invertible in }(\QA_X^\I)^{\#}\}=\{0\}
\end{equation}
for all $T\in\I(X)$.
Here $(\QA_X^\I)^{\#}=\mathbb K\oplus \QA_X^\I$ is the formal unitisation of the Banach algebra $\QA_X^\I$ and $\mathbb 1$ denotes the unit element of $(\QA_X^\I)^{\#}$.  Recall that the condition \eqref{radical} is equivalent to the usual definition of a radical Banach algebra whenever the underlying scalar field $\mathbb K$ is the complex numbers $\mathbb C$, see e.g.  \cite[Proposition 1.5.32(iv)]{Dales00}. Moreover, if $\mathbb K=\mathbb R$, then \eqref{radical} is consistent with the interpretation of a radical real compact-by-approximable algebra $\QA_X$ displayed on page 4 of \cite{TW22}.

Our first result verifies that the quotient algebra $\QA_X^\I$ is radical in the sense of \eqref{radical} for any Banach operator ideal $\I=(\I,\VI)$ contained in the compact operators $\K=(\K,\V)$. Recall here that the compact-by-approximable algebra $\QA_X$ is a radical Banach algebra, see \cite[Theorem 2.5.8(iv)]{Dales00} for the case $\mathbb K=\mathbb C$ and \cite[Proposition 2.8]{TW22} for the case $\mathbb K=\mathbb R$.

\begin{prop}\label{prop21}
Suppose that $\I=(\I,\VI)$ is a Banach operator ideal such that $\I\subset\K$ and let $X$ be an arbitrary Banach space. Then the quotient algebra $\QA_X^\I$ is a radical Banach algebra in the sense of \eqref{radical}.
\end{prop}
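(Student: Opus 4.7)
The plan is to show that for every $T\in\I(X)$ and every nonzero scalar $\lambda$, the element $\lambda\mathbb 1-(T+\FI)$ is invertible in $(\QA_X^\I)^{\#}$, which yields \eqref{radical}. After replacing $T$ by $T/\lambda$, this reduces to the case $\lambda=1$: I must exhibit an inverse of $\mathbb 1-(T+\FI)$ for an arbitrary $T\in\I(X)$.

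The first step is to invoke the Riesz--Schauder theory. Since $\I\subset\K$, the operator $T$ is compact and $I-T$ is Fredholm of index $0$. A routine decomposition (writing $X=\ker(I-T)\oplus M=N\oplus\operatorname{Ran}(I-T)$ with $\dim\ker(I-T)=\dim N<\infty$ and transporting $\ker(I-T)$ isomorphically onto $N$) produces a finite-rank operator $F\in\F(X)$ such that $I-T-F$ is a linear bijection of $X$ onto itself. By the open mapping theorem its inverse $R:=(I-T-F)^{-1}$ lies in $\mathcal L(X)$.

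The second step is to lift $R$ back to the quotient algebra. Setting $S:=R-I$ and exploiting $R(I-T-F)=I$, one rewrites $S=R(T+F)$. Since $T+F\in\I(X)$ and $R\in\mathcal L(X)$, the operator ideal property (BOI2) forces $S\in\I(X)$, so $\mathbb 1+(S+\FI)$ is a bona fide element of $(\QA_X^\I)^{\#}$. A direct calculation in $\mathcal L(X)$ yields
\[(I+S)(I-T)=R(I-T-F)+RF=I+RF,\qquad (I-T)(I+S)=(I-T-F)R+FR=I+FR,\]
and both $RF$ and $FR$ lie in $\F(X)\subset\FI$ because $F$ has finite rank. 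Passing to the quotient shows that $\mathbb 1+(S+\FI)$ is a two-sided inverse of $\mathbb 1-(T+\FI)$, completing the argument.

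The main subtlety is ensuring that the quasi-inverse ends up in the correct algebra. Riesz theory effortlessly supplies an inverse $R\in\mathcal L(X)$ of a finite-rank perturbation of $I-T$, but the element $R-I$ must land specifically in $\I(X)$ rather than in $\mathcal L(X)$ or $\K(X)$ for the manipulations in $(\QA_X^\I)^{\#}$ to make sense. This is precisely what the operator ideal property (BOI2) secures, together with the inclusion $\F(X)\subset\I(X)$; it is also the reason the result holds uniformly across all Banach operator ideals $\I\subset\K$ with no additional hypotheses on $\I$.
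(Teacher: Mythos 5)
Your proof is correct and takes essentially the same route as the paper's: both invoke Riesz--Schauder theory to perturb $I-T$ by a finite-rank operator into an invertible one, then display an explicit inverse in $(\QA_X^\I)^{\#}$ whose membership in the correct algebra is secured by the operator ideal property (BOI2). Your normalisation to $\lambda=1$ and the choice $R(T+F)$ for the quasi-inverse streamline the bookkeeping slightly, but the substance of the argument is identical.
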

\begin{proof}
Suppose that $\lambda\in\mathbb K\setminus\{0\}$ and let $T\in\I(X)$ be arbitrary. Since $\I\subset\K$, we have $\lambda^{-1}T\in\K(X)$. According to the Riesz-Schauder theory (see the implication (c)$\Rightarrow$(b) of \cite[Theorem 1.4.5]{CPY74}), there is an invertible bounded operator $S\in\mathcal L(X)$ and a bounded finite-rank operator $F\in\mathcal F(X)$ such that
\begin{equation}\label{166}
\lambda^{-1}T-I_X=S+F.
\end{equation} 
Here $I_X$ denotes the identity operator on $X$. We claim that
\begin{equation}\label{rad}
\lambda^{-1}\mathbb 1-(\lambda^{-2}TS^{-1}+\FI)\in(\QA_X^\I)^{\#}
\end{equation} 
is the inverse of the element $\lambda \mathbb 1-(T+\FI)\in(\QA_X^\I)^{\#}$. Note here that $TS^{-1}\in\I(X)$ by the operator ideal property and thus the element in \eqref{rad} is a well-defined element of the unitisation $(\QA_X^\I)^{\#}$. Towards the claim, we first observe that the identity \eqref{166} implies that
\begin{equation}\label{k0}
-T-TS^{-1}+\lambda^{-1}T^2S^{-1}=-T(S+I_X-\lambda^{-1}T)S^{-1}=TFS^{-1}\in \F(X)
\end{equation}
and
\begin{equation}\label{k12}
-TS^{-1}-T+\lambda^{-1}TS^{-1}T=-TS^{-1}(I_X+S-\lambda^{-1}T)=TS^{-1}F\in\F(X),
\end{equation}
since $F\in \F(X)$. It then follows from \eqref{k0} that
\begin{align*}
\Big( \lambda \mathbb{1}&-(T+\FI)\Big)\cdot\Big(\lambda^{-1}\mathbb{1}-(\lambda^{-2}TS^{-1}+\FI)\Big)\\
&=\mathbb{1}+(\lambda^{-1} TFS^{-1}+\FI)=\mathbb{1} \in(\QA_X^\I)^{\#},
\end{align*}
and similarly from \eqref{k12} that 
\begin{align*}
\Big(\lambda^{-1}&\mathbb{1}-(\lambda^{-2}TS^{-1}+\FI)\Big)\cdot\Big(\lambda \mathbb{1}-(T+\FI)\Big)\\
&=\mathbb 1+(\lambda^{-1}TS^{-1}F+\FI)=\mathbb 1\in(\QA_X^\I)^{\#}.
\end{align*}
Thus the element $\lambda \mathbb 1-(T+\FI)$ is invertible in $(\QA_X^\I)^{\#}$, and so $\lambda\notin\sigma(T+\FI)$. Since it is clear that $0\in\sigma(T+\FI)$, we thus have $\sigma(T+\FI)=\{0\}$.
\end{proof}

\begin{remark}
Recall that a complex radical Banach algebra cannot have a unit. It follows from Proposition \ref{prop21} that also in the real case $\mathbb K=\mathbb R$ the quotient algebra $\QA_X^\I$ is non-unital whenever $\I\subset\K$. In fact, assume towards a contradiction that $U+\FI\in\QA_X^\I$ is the unit element, where $U\in\I(X)\setminus\FI$. By Proposition \ref{prop21} we have $1\notin\sigma(T+\FI)$ and thus the element $\mathbb 1-(U+\FI)\in (\QA_X^\I)^{\#}$ is invertible. Let $\lambda \mathbb 1+(T+\FI)$ be the inverse for some $\lambda\in\mathbb K$ and $T\in\I(X)$. It follows that
\begin{align*}
\mathbb 1&=\Big(\mathbb 1-(U+\FI)\Big)\cdot\Big(\lambda\mathbb 1+(T+\FI)\Big)\\
&=\lambda\mathbb  1-(\lambda U+T-UT+\FI)
=\lambda \mathbb 1-(\lambda U+\FI),
\end{align*}
where the last equality holds since $T-UT\in\FI$. This means that $\lambda=1$ and $U\in\FI$, which is a contradiction.
\end{remark}
We proceed with a brief discussion of the general connection between the $\I$-approximation property and the quotient algebra $\QA_X^\I$ for an arbitrary Banach operator ideal $\I=(\I,\VI)$. In Section \ref{section3} we explore this connection further for the Banach operator ideals $\QN_p$ of quasi $p$-nuclear operators and $\SK_p$ of Sinha-Karn $p$-compact operators.

Let $\I=(\I,\VI)$ be a Banach operator ideal. Following Delgado et al.  \cite{DPS10_2} and Oja \cite{Oja12}, we say that the Banach space $X$ has the \emph{$\mathcal I$-approximation property} ($\mathcal I$-AP) if 
\begin{equation*}
\I(Y,X)=\overline{\F(Y,X)}^{\V_\mathcal I}
\end{equation*} 
for every Banach space $Y$. The notion of the $\I$-AP is a natural version of the classical approximation property related to an arbitrary Banach operator ideal $\I=(\I,\VI)$. In fact, recall that the Banach space $X$ has the \emph{approximation property} (AP) if for all compact subsets $K\subset X$ and all $\varepsilon>0$ there is a bounded finite-rank operator $F\in\F(X)$ such that
\[\sup_{x\in K}||Fx-x||<\varepsilon.\]
Due to a classical result of Grothendieck (see e.g. \cite[Theorem 1.e.4]{LT77}), the Banach space $X$ has the AP if and only if $\K(Y,X)=\A(Y,X)$ for every Banach space $Y$, where $\A=\overline{\F}$ denotes the Banach operator ideal $\A=(\A,\V)$ of \emph{approximable} operators. Consequently, $X$ has the AP if and only if $X$ has the $\K$-AP.
\smallskip

Clearly $\QA_X^\I=\{0\}$ whenever $X$ has the $\I$-AP, and thus a good understanding of the $\I$-AP is essential for studies of the quotient algebra $\QA_X^\I$. In general, it may happen that $\QA_X^\I=\{0\}$ even though $X$ fails the $\I$-AP as we will see in Example \ref{trivialquotient2}. Nevertheless, if the Banach space $X$ fails the $\I$-AP, there is by definition a Banach space $Y$ such that $\I(Y,X)\neq \overline{\F(Y,X)}^{\VI}$. It then follows by the operator ideal property that $\QA_{X\oplus Y}^\I\neq \{0\}$ as we will demonstrate below.

For this, pick an operator $T\in\I(Y,X)\setminus\overline{\F(Y,X)}^{\VI}$. Let $J_X:X\to X\oplus Y$ and $J_Y:Y\to X\oplus Y$ denote the natural isometric embeddings, and let $P_X:X\oplus Y\to X$ and $P_Y:X\oplus Y\to Y$ denote the natural projections. Consider the operator $U:=J_XTP_Y\in\mathcal L(X\oplus Y)$. By the operator ideal property (for $\I$) we have $U\in\I(X\oplus Y)$. On the other hand, by the operator ideal property for the approximative kernel $\overline{\F}^{\VI}$ (see below for the definition of $\overline{\F}^{\VI}$) we have $U\notin \overline{\F(X\oplus Y)}^{\VI}$. In fact, suppose on the contrary that $U\in\overline{\F(X\oplus Y)}^{\VI}$. Then $T=P_XUJ_Y\in\overline{\F(Y,X)}^{\VI}$ by the operator ideal property, which is a contradiction.

Above we used the fact that the class $\overline{\F}^{\VI}=(\overline{\F}^{\VI},\VI)$ defined by the components $\overline{\F}^{\VI}(X,Y)=\overline{\F(X,Y)}^{\VI}$ and the ideal norm $\VI$ is a Banach operator ideal, which is called the \emph{approximative kernel} of $\I=(\I,\VI)$, see \cite[D.1.13]{Pietsch87}.
If $\I=\overline{\F}^{\VI}$, then $\I$ is called an \emph{approximative} Banach operator ideal. Observe that there exists Banach spaces that fail the $\I$-AP only for non-approximative Banach operator ideals $\I=(\I,\VI)$.
\smallskip

The simple construction above of the direct sum $X\oplus Y$ with a non-trivial quotient algebra $\QA_{X\oplus Y}^{\I}$ can be generalised to construct a Banach space $Z$ for which $\QA_Z^\I$ is infinite-dimensional. We refer to \cite[Section 2]{TW21} for related results on the compact-by-approximable algebra $\QA_X=\K(X)/\A(X)$. 
\begin{ex}\label{infinitedimensional}
Let $\I=(\I,\VI)$ be a non-approximative Banach operator ideal and suppose that the Banach space $X$ fails the $\I$-AP. Let $Y$ be a Banach space such that 
\begin{equation}\label{notequal}
\I(Y,X)\neq \overline{\F(Y,X)}^{\VI}.
\end{equation} 
Denote $X_0=X$ and $X_n=Y$ for every $n\in\mathbb N$, and consider the $\ell^p$-direct sum $Z:=\left(\bigoplus_{n=0}^\infty X_n\right)_{\ell^p}$, where $1\le p\le\infty$ and the case $p=\infty$ denotes a direct $c_0$-sum with the supremum norm. Then the quotient algebra $\QA_Z^\I$ is infinite-dimensional.
\end{ex}
\begin{proof}
For each $n\in\mathbb N_0:=\mathbb N\cup\{0\}$ let $P_n:Z\to X_n$ denote the natural projection and let $J_n:X_n\to Z$ denote the corresponding natural isometric embedding. In view of \eqref{notequal} there is an operator $T\in\I(Y,X)\setminus\overline{\F(Y,X)}^{\VI}$ so that $c:=\inf||T-A||_{\I}>0$, where the infimum is taken over all $A\in\overline{\F(Y,X)}^{\VI}$. Let $T_n:=J_0 TP_n\in\I(Z)$ for all $n\in\mathbb N$. Note that
\begin{equation}\label{bounded}
||T_n+\overline{\F(Z)}^{\VI}||\le||T_n||_\I=||J_0TP_n||_\I\le||T||_\I
\end{equation}
for all $n\in\mathbb N$ by (BOI2).
Moreover, for all $m,n\in\mathbb N$, $n\neq m$ and all $S\in\overline{\F(Z)}^{\VI}$ we have
\begin{align*}
||T_n-T_m-S||_{\I}&=||J_0TP_n-J_0TP_m-S||_{\I}\\
&\geq||P_0(J_0TP_n-J_0TP_m-S)J_n||_{\I}\\
&=||T-P_0SJ_n||_{\I}\ge c>0
\end{align*}
since $P_mJ_n=0$ and $P_0SJ_n\in\overline{\F(Y,X)}^{\VI}$ by the operator ideal property. It follows that
\begin{equation}\label{subsequences}
||T_n-T_m+\overline{\F(Z)}^{\VI}||\ge c>0
\end{equation} 
for all $n\neq m$. By \eqref{bounded} and \eqref{subsequences} the sequence $(T_n+\overline{\F(Z)}^{\V_{\I}})\subset\mathfrak A_Z^{\I}$ is a bounded sequence with no  convergent subsequences. Consequently, dim$\,\QA_Z^\I=\infty$.
\end{proof}
\section{Quasi \texorpdfstring{$p$}{p}-nuclear operators and Sinha-Karn \texorpdfstring{$p$}{p}-compact operators}\label{section3}
In this section we initiate our studies of the quotient algebras $\QAQN$ and $\QASK$
for the quasi $p$-nuclear operators $\QN_p=(\QN_p,\V_{\QN_p})$ and the Sinha-Karn $p$-compact operators $\SK_p=(\SK_p,\V_{\SK_p})$. It turns out that these quotient algebras are trivial for large classes of Banach spaces. In fact, in the main result of this section (Theorem \ref{trivialquotient}) we establish that if the Banach space $X$ has cotype 2, then the quotient algebra $\QAQN=\{0\}$ for every $1\le p<\infty$. Similarly, if the dual space $X^*$ has cotype 2, then $\QASK=\{0\}$ for every $1\le p<\infty$. These results involve observations on the $\QN_p$-AP and the $\SK_p$-AP which are of independent interest. Moreover, our results yield Banach spaces $X$ that fail the $\QN_p$-AP, but for which $\QAQN=\{0\}$ in the case of $1\le p<2$. Similar examples is obtained for the Sinha-Karn $p$-compact operators $\SK_p$ in the case of $2<p<\infty$. 
\smallskip

We proceed with the definitions, and a discussion of relevant properties of the Banach operator ideals $\QN_p$ and $\SK_p$. Throughout this section the closed unit ball of the Banach space $X$ is denoted by $B_X$ and $p'$ denotes the dual exponent of the real number $p\in[1,\infty)$.

Suppose that $1\leq p<\infty$ and let $X$ and $Y$ be Banach spaces. Following Persson and Pietsch \cite[Section 4]{PP69}, the operator $T\in\mathcal L(X,Y)$ is called \emph{quasi $p$-nuclear}, denoted $T\in\QN_p(X,Y)$, if there is a strongly $p$-summable sequence $(x_k^*)\in\ell^p_s(X^*)$ such that
\begin{equation}\label{rt}
||Tx||\leq \big(\sum_{k=1}^\infty |x_k^*(x)|^p\big)^{1/p},\quad x\in X.
\end{equation}
The class $\QN_p=(\QN_p,\V_{\QN_p})$ is a Banach operator ideal where the ideal norm $\V_{\QN_p}$ is defined by
\[||T||_{\QN_p}=\inf\{||(x_k^*)||_p\mid (\ref{rt})\text{ holds for }(x_k^*)\in\ell^p_s(X^*)\},\qquad T\in\QN_p(X,Y).\]
\begin{remark}\label{QNpinjhull}
Let $1\le p<\infty$. It is known that $\QN_p=\N_p^{inj}$, where $\N_p^{inj}$ denotes the injective hull of the $p$-nuclear operators $\N_p=(\N_p,\V_{\N_p})$, see \cite[Satz 38 and Satz 39]{PP69} or the comment before \cite[Theorem 6]{Pietsch14}.
\end{remark}
We say that the operator $T\in\mathcal L(X,Y)$ is a \emph{Sinha-Karn $p$-compact} operator, denoted $T\in\SK_p(X,Y)$, if there is a strongly $p$-summable sequence $(y_k)\in\ell^p_s(Y)$ such that
\begin{equation}\label{t}
T(B_X)\subset \Big\{\sum_{k=1}^\infty \lambda_k y_k\mid (\lambda_k)\in B_{\ell^{p'}}\Big\}\quad\big(\text{if }p=1\text{ then }(\lambda_k)\in B_{c_0}\big).
\end{equation}
The class $\SK_p=(\SK_p,\V_{\SK_p})$ of the Sinha-Karn $p$-compact operators is a Banach operator ideal, see \cite[Theorem 4.2]{SK02} and \cite[Proposition 3.15]{DPS10}, where the ideal norm $\V_{\SK_p}$ is defined by
\[||T||_{\SK_p}=\inf \{||(y_k)||_p\mid (\ref{t})\text{ holds for }(y_k)\in\ell^p_s(Y)\},\qquad T\in\SK_p(X,Y).\] 
We point out that the Sinha-Karn $p$-compact operators were introduced in \cite{SK02} as $p$-compact operators and with the notation $(K_p,\kappa_p)$ for the corresponding Banach operator ideal. However, we prefer to follow the slightly modified notation and terminology introduced by Tylli and the author in \cite{TW22}.

We will require the following facts: 
\begin{align}\label{monot}
&\QN_p\subset\QN_q\subset\K,\\
\label{monotonicity} &\SK_p\subset\SK_q\subset\mathcal K,
\end{align} 
for all $1\leq p\leq q<\infty$, see \cite[Satz 24 and Satz 25]{PP69} for \eqref{monot} and \cite[Proposition 4.3]{SK02} or \cite[p. 949]{Oja12} for \eqref{monotonicity}. Moreover, the following dualities hold for all Banach spaces $X$ and $Y$, and all operators $T\in\mathcal L(X,Y)$, see \cite[Theorem 2.8]{GLT12}:
\begin{align}
T\in\QN_p(X,Y)&\Leftrightarrow T^*\in\SK_p(Y^*,X^*).\text{ In this case }||T||_{\QN_p}=||T^*||_{\SK_p}. \label{911}\\
T\in\SK_p(X,Y)&\Leftrightarrow T^*\in\QN_p(Y^*,X^*).\text{ In this case }||T||_{\SK_p}=||T^*||_{\QN_p}.\label{9112}
\end{align}
In other words, $\QN_p=\SK_p^{dual}$ and $\SK_p=\QN_p^{dual}$, where $\I^{dual}=(\I^{dual},\V_{\I^{dual}})$ denotes the dual ideal of the Banach operator ideal $\I=(\I,\VI)$. 
Recall that the components of the dual ideal $\I^{dual}$ are defined by 
\[\I^{dual}(X,Y)=\{T\in\mathcal L(X,Y)\mid T^*\in\I(Y^*,X^*)\},\]
and the associated ideal norm is defined by $||T||_{\I^{dual}}=||T^*||_{\I}$ for $T\in\I^{dual}(X,Y)$.

Due to the dualities \eqref{911} and \eqref{9112}, a simultaneous study of the quotient algebras $\QAQN$ and $\QASK$ is relevant. In fact, it is easy to check that if $X$ is a reflexive Banach space, then the mapping
\[\Phi_X:\QAQN\to \QA_{X^*}^{\SK_p}, \quad \Phi_X\big(T+\overline{\F(X)}^{\V_{\QN_p}}\big)=T^*+\overline{\F(X^*)}^{\V_{\SK_p}}\]
defines an isometric linear isomorphism which reverses products for any $1\le p<\infty$. However, this mapping is not a linear isomorphism in general for non-reflexive spaces in the case $p\neq 2$, and in Section \ref{section6} we will address some of the subtleties of the relationship between $\QAQN$ and $\QA_{X^*}^{\SK_p}$ as well as between $\QASK$ and $\QA_{X^*}^{\QN_p}$.
\smallskip

We proceed with results and examples concerning the $\QN_p$-AP and the $\SK_p$-AP. The study of the $\SK_p$-AP was initiated in \cite{DPS10_2} and it has thereafter been studied in several papers, see e.g. \cite{GLT12, Oja12, LT13, Kim15}. Moreover, the $p$-AP, which is a related weaker approximation property, has been studied in e.g. \cite{SK02, DOPS09, CK10}. The $\QN_p$-AP has received less attention, although it is known that the $\QN_p$-AP coincides with the \emph{approximation property of type p} whose study was initiated by Sinha and Karn \cite{SK08}, see the discussion on \cite[p. 499]{DPS10_2}. 

\begin{remarks}\label{9113}
(i) If the Banach space $X$ has the AP, then $X$ has both the $\QN_p$-AP and the $\SK_p$-AP for every $1\leq p<\infty$. Here the case of the $\QN_p$-AP follows from \cite[Satz 26]{PP69} (see \cite[19.7.7]{Jarchow81} for a detailed proof of \cite[Satz 26]{PP69}) or \cite[Proposition 4.8]{SK08}, and the case of the $\SK_p$-AP was established in \cite[Proposition 3.10]{GLT12}. 
\smallskip

(ii) The Banach operator ideals $\QN_2$ and $\SK_2$ are approximative, that is, $\QN_2=\overline{\F}^{\V_{\QN_2}}$ and $\SK_2=\overline{\F}^{\V_{\SK_2}}$, and thus every Banach space has both the $\QN_2$-AP and the $\SK_2$-AP. These facts can be verified in different ways. A short proof for $\SK_2$ is indicated on \cite[p. 952]{Oja12} and the case of $\QN_2$ can be shown in a similar way. In fact, it follows from \cite[18.1.8 and 18.2.1]{Pietsch80} that $\mathcal N_2^{inj}=\mathcal N_2$ and thus $\QN_2=\mathcal N_2$ (see Remark \ref{QNpinjhull}). It follows that $\QN_2$ is approximative since $\N_p$ is approximative for any $1\le p<\infty$ according to \cite[Satz 5]{PP69}.
\end{remarks}
Let $1\le p<\infty$ and $p\neq 2$. Since the AP implies both the $\QN_p$-AP and the $\SK_p$-AP, it is in general a difficult task to construct or recognise Banach spaces that fail either the $\QN_p$-AP or the $\SK_p$-AP. 
Due to results of Reinov \cite{Reinov82} that rely on Enflo's \cite{E} and Davie's \cite{Da73} constructions of Banach spaces failing the AP, there is a Banach space that fails the $\QN_p$-AP, see also \cite[p. 164]{DOPS09}. By duality, there is a Banach space that fails the $\SK_p$-AP, see \cite[Theorem 2.4]{DPS10_2}. In Examples \ref{Ex:33} and \ref{Ex:34} we deduce that such spaces exist within the class of the closed subspaces of $\ell^p$ from known results on these approximation properties and the classical AP. In addition, by applying Terzio\u{g}lu's factorisation theorem \cite{Terzioglu71} and Example \ref{infinitedimensional}, we find closed subspaces $Z_1,Z_2\subset c_0$ such that the quotient algebras $\QA_{Z_1}^{\QN_p}$ and $\QA_{Z_2}^{\SK_p}$ are infinite-dimensional.

For the proofs of Example \ref{Ex:33}.(i) and Proposition \ref{SKp}, we recall that the operator $T\in\mathcal L(X,Y)$ is called \emph{absolutely $p$-summing}, denoted $T\in\varPi_p(X,Y)$, if there is a constant $C\ge 0$ such that 
\begin{equation}\label{121}
\big(\sum_{k=1}^n ||Tx_k||^p\big)^{1/p}\le C\cdot\sup_{x^*\in B_{X^*}}\big(\sum_{k=1}^n |x^*(x_k)|^p\big)^{1/p}
\end{equation}
for every finite subset $\{x_1,\ldots,x_n\}\subset X$. 
The class $\varPi_p=(\varPi_p,\V_{\varPi_p})$ is a Banach operator ideal for all $1\le p<\infty$, where the absolutely $p$-summing norm $||T||_{\varPi_p}$ of any $T\in\varPi_p(X,Y)$ is the infimum of all constants $C\ge 0$ for which \eqref{121} holds, see e.g. \cite[Remark 11.1]{DF93}. A straightforward computation yields the following Banach operator ideal inclusion:
\begin{equation}\label{0311}
\QN_p\subset\varPi_p
\end{equation}
for all $1\le p<\infty$.

\begin{ex}\label{Ex:33}
Suppose that $1\le p<\infty$ and $p\neq 2$. 
\begin{enumerate}[(i)]
\item  Then there is a closed subspace $X\subset \ell^p$ that fails the $\QN_p$-AP.
\item Then there is a closed subspace $Z\subset c_0$ such that dim$\,\QA_Z^{\QN_p}=\infty$.
\end{enumerate}
\end{ex}
\begin{proof}
(i) By \cite[Corollary 1.1]{Reinov82} there are Banach spaces $E$ and $F$ together with an operator
\begin{equation}\label{222022}
T\in\QN_p(E,F)\setminus\overline{\F(E,F)}^{\V_{\varPi_p}}.
\end{equation}
It follows from \eqref{0311} that $T\notin \overline{\F(E,F)}^{\V_{\QN_p}}$.
According to the proof of \cite[Lemma 5]{PP69}, there are operators $A\in\K(E,Y)$, $S\in\QN_p(Y,X)$ and $B\in\K(X,F)$ such that $T=BSA$, where $X$ is a closed subspace of $\ell^p$. Since $T\notin\overline{\F(E,F)}^{\V_{\QN_p}}$ we have $S\notin\overline{\F(Y,X)}^{\V_{\QN_p}}$ by the operator ideal property, and thus, $X$ fails the $\QN_p$-AP. 
\smallskip

(ii) Consider the factorisation $T=BSA$ in part (i) of the quasi $p$-nuclear operator $T$ in \eqref{222022}. According to the compact factorisation result of Terzio\u{g}lu \cite{Terzioglu71} (or Randtke \cite{Randtke72}), the compact operators $A\in\K(E,Y)$ and $B\in\K(X,F)$ admit compact factorisations $A=A_2A_1$ and $B=B_2B_1$ through closed subspaces $M\subset c_0$ and $N\subset c_0$, respectively. Let $U:=B_1SA_2$ so that the following diagram commutes:
\begin{center}
\begin{tikzcd}
&E\arrow[dl,swap,"A_1"]\arrow{d}{A}\arrow{r}{T} &F&\\
M\arrow[bend right=20,swap]{rrr}{U}\arrow{r}{A_2} &Y\arrow{r}{S}&X\arrow{u}{B}\arrow{r}{B_1}&N\arrow[ul,swap,"B_2"].
\end{tikzcd} 
\end{center}
Here $U\in\QN_p(M,N)\setminus \overline{\F(M,N)}^{\V_{\QN_p}}$ by the operator ideal property, since $S\in\QN_p(Y,X)$ and $T=B_2UA_1\notin\overline{\F(E,F)}^{\V_{\QN_p}}$ by \eqref{222022}. Thus $\QN_p(M,N)\neq\overline{\F(M,N)}^{\V_{\QN_p}}$.

Finally, let $X_0=N$ and $X_n=M$ for all $n\in\mathbb N$ and consider the $c_0$-direct sum $Z:=\bigoplus_{n=0}^\infty X_n\subset c_0$. It then follows from Example \ref{infinitedimensional} that dim$\,\QA_Z^{\QN_p}=\infty$.
\end{proof}
For the analogous examples of closed subspaces failing the $\SK_p$-AP we recall the following surprising result of Oja \cite[Theorem 1]{Oja12}: If $X$ is a closed subspace of $L^p(\mu)$, then $X$ has the AP if and only if $X$ has the $\SK_p$-AP. 
\begin{ex}\label{Ex:34}
Suppose that $1\le p<\infty$ and $p\neq 2$.
\begin{enumerate}[(i)]
\item  Then there is a closed subspace $X\subset \ell^p$ that fails the $\SK_p$-AP.
\item Then there is a closed subspace $Z\subset c_0$ such that dim$\,\QA_Z^{\SK_p}=\infty$.
\end{enumerate}
\end{ex}
\begin{proof}
(i) Let $X\subset \ell^p$ be a closed subspace that fails the AP. Recall that such a subspace exists due to Davie \cite{Da73} for $p>2$ and Szankowski \cite{Sza78} for $1\le p<2$. Then $X$ fails the $\SK_p$-AP according to Oja's result \cite[Theorem 1]{Oja12} stated above.
\smallskip

(ii) Let $F$ be any Banach space that fails the $\SK_p$-AP. Then there is an operator $T\in \SK_p(E,F)\setminus\overline{\F(E,F)}^{\V_{\SK_p}}$ for a suitable Banach space $E$. According to \cite[Proposition 2.9]{GLT12}, there are Banach spaces $Y$ and $X$ together with operators $A\in\K(E,Y)$, $S \in\SK_p(Y,X)$ and $B\in\K(X,F)$ such that $T=BSA$. 

We next apply Terzio\u{g}lu's factorisation result \cite{Terzioglu71} on the compact operators $A\in\K(E,Y)$ and $B\in\K(X,F)$ and obtain, in a similar way as in Example \ref{Ex:33}.(ii), closed subspaces $M,N\subset c_0$ together with an operator $U\in\SK_p(M,N)\setminus\overline{\F(M,N)}^{\V_{\SK_p}}$. Let $Z:=\bigoplus_{n=0}^\infty X_n\subset c_0$, where $X_0=N$ and $X_n=M$ for all $n\in\mathbb N$. Then dim$\,\QA_Z^{\SK_p}=\infty$ by Example \ref{infinitedimensional}. 
\end{proof}

The following result exhibits a large class of Banach spaces that have the $\QN_p$-AP for every $2<p<\infty$. We refer to e.g. \cite[Section 6.2]{AK06} or \cite[Chapter 11]{DJT95} for the notions of type and cotype for Banach spaces. In particular, recall that $\ell^p$ and all closed subspaces $X\subset\ell^p$ have cotype 2 whenever $1\le p\le 2$ and type 2 whenever $2\le p<\infty$.

\begin{prop}\label{QNpcotype2}
Suppose that $X$ is a Banach space with cotype 2. Then $X$ has the $\QN_p$-AP for every $2<p<\infty$.
\end{prop}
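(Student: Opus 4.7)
The core idea is that every $T \in \QN_p(Y,X)$ factors through $\ell^p$ in a canonical way, and when $X$ has cotype 2 and $p > 2$ this factorisation can be routed through a Hilbert space, producing explicit finite-rank approximants of $T$ in $\V_{\QN_p}$ by truncation of the dominating sequence.

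From a dominating sequence $(x_k^*) \in \ell^p(Y^*)$ for $T$, I would write $T = R \circ S$, where $S \colon Y \to \ell^p$ is $y \mapsto (x_k^*(y))_k$ and $R \colon \overline{S(Y)} \to X$ is the induced operator with $\|R\| \le 1$; note that $S \in \QN_p(Y,\ell^p)$ with the dominating sequence $(x_k^*)$ itself. Since $\overline{S(Y)}$ is a closed subspace of $\ell^p$ with $p > 2$, it inherits type~2, while $X$ has cotype~2 by hypothesis. Maurey's factorisation theorem for operators from a type~2 space into a cotype~2 space (see e.g.\ \cite{DJT95}) then yields a factorisation $R = R_2 \circ R_1$ through a Hilbert space $H$, with $R_1 \colon \overline{S(Y)} \to H$ and $R_2 \colon H \to X$. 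Next I would invoke Maurey's extension theorem for Hilbert-valued operators defined on subspaces of a type~2 space to obtain an extension $\tilde R_1 \colon \ell^p \to H$ of $R_1$ with comparable norm, so that $T = R_2 \circ \tilde R_1 \circ S$.

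With this enlarged factorisation in hand, I would approximate $S$ by the truncations $S_n(y) := (x_1^*(y), \ldots, x_n^*(y), 0, 0, \ldots)$, which are finite-rank operators $Y \to \ell^p$. The estimate
\[
\|S - S_n\|_{\QN_p(Y, \ell^p)} \le \Big(\sum_{k>n} \|x_k^*\|^p\Big)^{1/p} \xrightarrow{n\to\infty} 0
\]
together with the operator ideal property then gives, for $T_n := R_2 \tilde R_1 S_n \in \F(Y,X)$,
\[
\|T - T_n\|_{\QN_p} \le \|R_2\|\,\|\tilde R_1\|\,\|S - S_n\|_{\QN_p} \xrightarrow{n\to\infty} 0.
\]
Consequently $T \in \overline{\F(Y,X)}^{\V_{\QN_p}}$, and since $Y$ was arbitrary, $X$ has the $\QN_p$-AP.

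The main obstacle is marshalling the two Maurey-type inputs in the correct form: the type~2/cotype~2 factorisation of $R$ through a Hilbert space, and the extension of the Hilbert-valued $R_1$ to the whole of $\ell^p$. Both are classical but non-trivial results, and it is their simultaneous availability that reduces the proof to the routine truncation argument for $S$. If the extension $\tilde R_1$ were not available, one could not directly define finite-rank approximants $T_n$ via truncation, since the image of $S_n$ need not land inside $\overline{S(Y)}$, where $R$ (and hence $R_1$) is a priori defined.
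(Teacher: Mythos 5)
Your proof is correct but follows a genuinely different route from the paper's. The paper cites the factorisation from the proof of \cite[Lemma 5]{PP69}, which produces $T = BA$ with $A \in \QN_p(Y,M)$ and, crucially, $B \in \K(M,X)$ \emph{compact}, for some closed subspace $M \subset \ell^p$; it then invokes \cite[Theorem 2.2]{TW22} (every compact operator from a type 2 space to a cotype 2 space is approximable) to conclude $B \in \A(M,X)$, and finishes with the elementary Lemma \ref{minimal}.(i). You instead use the cruder canonical factorisation $T = RS$, in which $R : \overline{S(Y)} \to X$ is merely a contraction rather than a compact operator, and you compensate for this by invoking the Kwapie\'n--Maurey theorem to factor $R$ through a Hilbert space and then Maurey's extension theorem to lift the Hilbert-valued piece from $\overline{S(Y)}$ to all of $\ell^p$; the finite-rank approximants then come out explicitly by truncating the dominating sequence, and the $\V_{\QN_p}$-estimate closes the argument. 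The paper's proof is shorter at the cost of two black boxes; yours is longer but self-contained and produces the approximants explicitly. Both arguments rest on the same structural observation, namely that closed subspaces of $\ell^p$ for $p > 2$ have type 2 while $X$ has cotype 2 — you just extract the consequence of that observation by hand rather than by citation.
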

\begin{proof}
Let $2<p<\infty$ and suppose that $T\in\QN_p(Y,X)$, where $Y$ is an arbitrary Banach space. According to the proof of \cite[Lemma 5]{PP69}, there is a closed subspace $M\subset\ell^p$ together with operators $A\in\QN_p(Y,M)$ and $B\in\K(M,X)$ such that $T=BA$. Now, since $M$ has type 2 and $X$ has cotype 2, we have $B\in\K(M,X)=\A(M,X)$ according to \cite[Theorem 2.2]{TW22}. It then follows from part (i) of Lemma \ref{minimal} below that $T=BA\in\overline{\F(Y,X)}^{\V_{\QN_p}}$. This concludes the proof.
\end{proof}
In the proof of Proposition \ref{QNpcotype2} we used the following elementary but useful lemma, which we state separately for convenient reference. 

\begin{lem}\label{minimal}
Let $\I=(\I,\VI)$ be a Banach operator ideal and let $X$, $Y$ and $Z$ be arbitrary Banach spaces.
\begin{enumerate}[(i)]
\item Suppose that $A\in \I(X,Y)$ and $B\in \A(Y,Z)$. Then $BA\in \overline{\F(X,Z)}^{\VI}$.
\item Suppose that $A\in \A(X,Y)$ and $B\in \I(Y,Z)$. Then $BA\in \overline{\F(X,Z)}^{\VI}$.
\end{enumerate}
\end{lem}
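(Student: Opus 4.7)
The plan is to use the ideal-norm estimate from (BOI2) combined with the definition of $\A(Y,Z)$ as the uniform closure of $\F(Y,Z)$. Both parts are essentially the same argument with the roles of the ideal operator and the approximable operator swapped, exploiting the fact that (BOI2) gives a mixed estimate $\|BSA\|_\I \le \|B\|\,\|A\|\,\|S\|_\I$ in which only one factor carries the ideal norm.

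For part (i), I would fix $A \in \I(X,Y)$ and $B \in \A(Y,Z)$. By the definition of $\A$, pick a sequence $(F_n) \subset \F(Y,Z)$ with $\|B - F_n\| \to 0$ in the uniform operator norm. Then each composition $F_n A$ lies in $\F(X,Z)$ because $F_n$ has finite rank (so $F_n A$ has finite rank as well). Applying (BOI2) with the uniform norm on $B-F_n$ gives
\[
\|BA - F_n A\|_\I = \|(B - F_n)A\|_\I \le \|B - F_n\|\,\|A\|_\I \longrightarrow 0,
\]
which exhibits $BA$ as a $\VI$-limit of elements of $\F(X,Z)$, i.e.\ $BA \in \overline{\F(X,Z)}^{\VI}$.

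Part (ii) is symmetric: fix $A \in \A(X,Y)$ and $B \in \I(Y,Z)$, and pick $(G_n) \subset \F(X,Y)$ with $\|A - G_n\| \to 0$. Each $B G_n$ lies in $\F(X,Z)$ since $G_n$ has finite rank, and (BOI2) yields
\[
\|BA - B G_n\|_\I = \|B(A - G_n)\|_\I \le \|B\|_\I\,\|A - G_n\| \longrightarrow 0,
\]
so $BA \in \overline{\F(X,Z)}^{\VI}$.

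There is essentially no obstacle here; the entire content of the lemma is the observation that (BOI2) allows one of the two outer factors in a composition to be controlled by the uniform norm rather than the ideal norm, so uniform approximation of that factor by finite-rank operators automatically yields $\VI$-approximation of the product. The only point worth flagging is that one must apply the \emph{finite-rank} approximants $F_n$ or $G_n$ (which belong to $\F$ and thus trivially to $\I$ with arbitrary ideal norm) rather than trying to approximate $B$ or $A$ directly within $\I$; this is what makes the argument work even though $B$ in (i) and $A$ in (ii) are a priori only uniformly, not $\VI$-, approximable by finite-rank operators.
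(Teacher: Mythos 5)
Your proof is correct and follows the same route as the paper: approximate the $\A$-factor uniformly by finite-rank operators and use the mixed estimate in (BOI2) to transfer the uniform convergence to $\VI$-convergence of the products. The only cosmetic difference is that you work with an approximating sequence while the paper uses a single $\varepsilon$-approximant.
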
 
\begin{proof} (i) Let $\varepsilon>0$ and pick a bounded finite-rank operator $F\in\F(Y,Z)$ such that $||B-F||<\varepsilon/||A||_\I$. Then $FA\in\F(X,Z)$ and 
\[
||BA-FA||_\I\le ||B-F||\,||A||_\I<\varepsilon
\]
by (BOI2). Consequently, $BA\in\overline{\F(X,Y)}^{\VI}$.
\smallskip
 
Part (ii) is proven similarly. 
\end{proof}

We proceed towards a result, which is similar to Proposition \ref{QNpcotype2}, for the Sinha-Karn $p$-compact operators in the case of $1\le p<2$.  
In the proof we use the following characterisation of the $\SK_p$-AP for dual Banach spaces, which is one of many equivalent conditions obtained by Delgado et al. \cite[Theorem 2.3]{DPS10_2}.
\begin{fact}\label{1612}
Suppose that $1\le p<\infty$ and let $X$ be a Banach space. Then the following are equivalent:
\begin{enumerate}[(i)]
\item $X^*$ has the $\SK_p$-AP.
\item $\QN_p(X,Y)=\overline{\F(X,Y)}^{\V_{\QN_p}}$ for every Banach space $Y$.
\end{enumerate}
\end{fact}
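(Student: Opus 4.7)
The plan is to connect the two conditions via the duality $\QN_p=\SK_p^{dual}$ and $\SK_p=\QN_p^{dual}$, anchored by the following key technical lemma: for any bounded operator $U\colon W\to X^*$, one has $U\in\SK_p(W,X^*)$ if and only if $U^*\iota_X\in\QN_p(X,W^*)$, with equal ideal norms, where $\iota_X\colon X\to X^{**}$ is the canonical embedding. One inequality follows from H\"older: an $\SK_p$-representation $U(w)=\sum_k\lambda_k^{(w)}y_k$ with $(y_k)\in\ell^p_s(X^*)$ and $(\lambda_k^{(w)})\in B_{\ell^{p'}}$ gives
\[
|(U^*\iota_X x)(w)|=|U(w)(x)|\le\Big(\sum_k|\lambda_k^{(w)}|^{p'}\Big)^{1/p'}\Big(\sum_k|y_k(x)|^p\Big)^{1/p},
\]
so $(y_k)$ is a $\QN_p$-witness for $U^*\iota_X$. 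The reverse uses Hahn--Banach: a $\QN_p$-witness $(x_k^*)\in\ell^p_s(X^*)$ with $|U(w)(x)|\le(\sum_k|x_k^*(x)|^p)^{1/p}$ says that the functional $x\mapsto U(w)(x)$ factors continuously through $\Psi\colon X\to\ell^p$, $\Psi(x)=(x_k^*(x))_k$; extending the induced functional from $\Psi(X)$ to all of $\ell^p$ yields $(\lambda_k^{(w)})\in B_{\ell^{p'}}$ with $U(w)=\sum_k\lambda_k^{(w)}x_k^*$.

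For the implication (ii) $\Rightarrow$ (i), let $S\in\SK_p(Z,X^*)$. The key lemma gives $\tilde S:=S^*\iota_X\in\QN_p(X,Z^*)$ with $\|\tilde S\|_{\QN_p}=\|S\|_{\SK_p}$. By (ii), there exist $F_n\in\F(X,Z^*)$ with $\|\tilde S-F_n\|_{\QN_p}\to 0$. Writing $F_n=\sum_k x_k^{(n)*}\otimes z_k^{(n)*}$, I transpose to the finite-rank operator $G_n:=\sum_k z_k^{(n)*}\otimes x_k^{(n)*}\in\F(Z,X^*)$; a direct computation shows $G_n^*\iota_X=F_n$. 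The key lemma applied to $U=S-G_n$ then yields $\|S-G_n\|_{\SK_p}=\|\tilde S-F_n\|_{\QN_p}\to 0$, so $X^*$ has the $\SK_p$-AP.

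For (i) $\Rightarrow$ (ii), given $T\in\QN_p(X,Y)$, duality provides $T^*\in\SK_p(Y^*,X^*)$, which by (i) is approximable in $\V_{\SK_p}$ by $H_n\in\F(Y^*,X^*)$. Applying the key lemma with $U=T^*-H_n$ and using the naturality identity $T^{**}\iota_X=\iota_Y T$ yields $\|\iota_Y T-H_n^*\iota_X\|_{\QN_p}\to 0$. The finite-rank operator $H_n^*\iota_X\colon X\to Y^{**}$ has image in a finite-dimensional subspace of $Y^{**}$ that typically lies outside $\iota_Y(Y)$, so the remaining task is to perturb it by a $\V_{\QN_p}$-small amount into an operator of the form $\iota_Y F_n$ with $F_n\in\F(X,Y)$. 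I would invoke the principle of local reflexivity on this finite-dimensional range, with a sufficiently large finite slice of $Y^*$ adapted to a $\QN_p$-witness, to replace each coefficient $y_k^{(n)**}$ appearing in $H_n$ by a vector $y_k^{(n)}\in Y$; injectivity of $\QN_p=\N_p^{inj}$ then gives $\|T-F_n\|_{\QN_p}=\|\iota_Y T-\iota_Y F_n\|_{\QN_p}$, which can be kept close to $\|\iota_Y T-H_n^*\iota_X\|_{\QN_p}$ and hence made small.

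The principal obstacle is the local reflexivity step in (i) $\Rightarrow$ (ii): the $\QN_p$ norm is global in the $Y^*$-variable, whereas the standard PLR selector $\tau\colon E\to Y$ only matches $\iota_Y\tau$ with $\mathrm{id}_E$ against a prescribed finite slice $F\subset Y^*$. One must therefore choose this slice carefully in relation to a strongly $p$-summable witness of the $\QN_p$ norm of $H_n^*\iota_X$, and quantify the resulting perturbation $\iota_Y F_n-H_n^*\iota_X$ in $\V_{\QN_p}$ rather than merely in operator norm.
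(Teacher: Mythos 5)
The paper does not prove Fact \ref{1612} itself; it cites \cite[Theorem 2.3]{DPS10_2} and, in a remark after the proof of Fact \ref{16122}, sketches how (i)$\Rightarrow$(ii) follows by factoring a quasi $p$-nuclear operator through a closed subspace of $\ell^p$ (via \cite[Lemma 5]{PP69}) and, if needed, the DFJP factorisation, so that the duality argument is carried out against a \emph{reflexive} intermediate space. Your ``key lemma'' is a correct unpacking of the duality $\QN_p=\SK_p^{dual}$ recorded in \eqref{911}--\eqref{9112}, and your proof of (ii)$\Rightarrow$(i) by transposing the finite-rank approximants is sound and quite direct; it is, if anything, cleaner than mimicking the paper's route for the companion Fact \ref{16122}.

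Your (i)$\Rightarrow$(ii), however, has a genuine gap precisely where you flag it, and I do not think the local reflexivity route can be made to work. Once you have $\|\iota_Y T-H_n^*\iota_X\|_{\QN_p}\to 0$ with $H_n^*\iota_X\in\F(X,Y^{**})$, setting $F_n=\tau H_n^*\iota_X$ for a local-reflexivity operator $\tau$ on the finite-dimensional range $E_n$ gives, via (BOI2), the estimate $\|\iota_Y\tau H_n^*\iota_X-H_n^*\iota_X\|_{\QN_p}\le\|\iota_Y\tau-\mathrm{id}_{E_n}\|\,\|H_n^*\iota_X\|_{\QN_p}$. But PLR only makes $\tau$ near-isometric and matches $\iota_Y\tau$ with $\mathrm{id}_{E_n}$ against a prescribed finite slice of $Y^*$; it does \emph{not} make $\|\iota_Y\tau-\mathrm{id}_{E_n}\|$ small, and indeed this quantity cannot tend to $0$ unless $E_n\subset\iota_Y(Y)$, since $\iota_Y\tau$ maps $E_n$ into $\iota_Y(Y)$. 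No choice of slice repairs this, because the quasi $p$-nuclear norm of an operator into $Y^{**}$ is controlled by the full $Y^{**}$-norm of its values, not by a finite slice of $Y^*$. The factorisation route sidesteps all of this: by \cite[Lemma 5]{PP69} write $T=BA$ with $A\in\QN_p(X,M)$, $M$ a closed subspace of $\ell^p$ (hence reflexive), and $B\in\K(M,Y)$; then $A^*\in\SK_p(M^*,X^*)$ is $\V_{\SK_p}$-approximable by some $F\in\F(M^*,X^*)$ by (i), reflexivity of $M$ and naturality give $A=(A^*)^*\iota_X$ and $F^*\iota_X\in\F(X,M)$ with $\|A-F^*\iota_X\|_{\QN_p}\le\|A^*-F\|_{\SK_p}$ via \eqref{9112} and (BOI2), and finally $BF^*\iota_X\in\F(X,Y)$ approximates $T$ in $\V_{\QN_p}$.
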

We also recall the following result of Persson and Pietsch \cite[Satz 43]{PP69}: 
\begin{equation}\label{1433}
||T||_{\QN_p}=||T||_{\varPi_p}
\end{equation}
for all $T\in\QN_p(X,Y)$, where $X$ and $Y$ are arbitrary Banach spaces (recall that in \cite{PP69} absolutely $p$-summing operators are called \emph{quasi-p-integral Abbildungen}). In particular, it follows from the Banach operator ideal inclusion $\QN_p\subset\varPi_p$ in \eqref{0311} and the identity \eqref{1433} that the approximative kernels of $\QN_p$ and $\varPi_p$ coincide, that is,
\begin{equation}\label{2012}
\overline{\F}^{\V_{\QN_p}}=\overline{\F}^{\V_{\varPi_p}}
\end{equation}
for all $1\le p<\infty$.

\begin{prop}\label{SKp}
Suppose that $X$ is a Banach space with cotype 2. Then $X^*$ has the $\SK_p$-AP for every $1\le p<2$.
\end{prop}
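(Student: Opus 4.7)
The plan is to apply Fact~\ref{1612} and thereby reduce the proposition to showing that
\[\QN_p(X, Y) = \overline{\F(X, Y)}^{\V_{\QN_p}}\]
for every Banach space $Y$ and every $1 \le p < 2$. Let $T \in \QN_p(X, Y)$ be arbitrary.

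First I would approximate $T$ by finite-rank operators in the weaker $\V_{\N_2}$-norm. By the monotonicity~\eqref{monot} and the identification $\QN_2 = \N_2$ from Remark~\ref{9113}(ii), we have $T \in \N_2(X, Y)$. Since $\N_2$ is approximative (\cite[Satz 5]{PP69}), there exist $F_n \in \F(X, Y)$ with $||T - F_n||_{\N_2} \to 0$, and the standard Banach operator ideal inclusion $\N_2 \subseteq \varPi_2$ then gives $||T - F_n||_{\varPi_2} \to 0$ as well.

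The key step is to upgrade this $\V_{\varPi_2}$-convergence to $\V_{\varPi_p}$-convergence, and it is here that the cotype~2 hypothesis on $X$ enters, via the classical extrapolation theorem of Maurey (see e.g.~\cite{DJT95}): if $X$ has cotype~2, then every absolutely $2$-summing operator from $X$ into any Banach space is absolutely $1$-summing, with equivalence of the $\V_{\varPi_1}$ and $\V_{\varPi_2}$ ideal norms. Combined with the standard monotonicity $\varPi_1 \subseteq \varPi_p \subseteq \varPi_2$ for $1 \le p \le 2$, this sandwich yields $\varPi_p(X, Y) = \varPi_2(X, Y)$ with equivalent norms, so there is a constant $C > 0$ independent of $n$ such that
\[||T - F_n||_{\varPi_p} \le C\,||T - F_n||_{\varPi_2} \to 0.\]
Since $T - F_n \in \QN_p(X, Y)$, the identity~\eqref{1433} translates this into $||T - F_n||_{\QN_p} \to 0$, so $T \in \overline{\F(X, Y)}^{\V_{\QN_p}}$, and Fact~\ref{1612} finishes the argument.

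The main obstacle is the correct invocation of Maurey's extrapolation theorem; once this deep classical result from the geometry of Banach spaces is in hand, the rest is a clean sandwich argument between the three norms $\V_{\N_2}$, $\V_{\varPi_2}$ and $\V_{\varPi_p}$, with~\eqref{1433} used at the end to pass back from $\V_{\varPi_p}$ to $\V_{\QN_p}$.
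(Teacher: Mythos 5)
Your proposal is correct and follows essentially the same route as the paper's own proof: reduce via Fact~\ref{1612} to showing $\QN_p(X,Y)=\overline{\F(X,Y)}^{\V_{\QN_p}}$, approximate in the $\QN_2$-norm (using that $\QN_2=\N_2$ is approximative), pass to $\varPi_2$, invoke Maurey's cotype-2 extrapolation to control the $\varPi_1$- and hence $\varPi_p$-norm, and finally use~\eqref{1433} to return to the $\QN_p$-norm. The only cosmetic differences are that you argue with sequences rather than $\varepsilon$, and you apply~\eqref{1433} directly where the paper cites the derived identity~\eqref{2012}.
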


\begin{proof} 
Suppose that $1\le p<2$. By Fact \ref{1612} above it suffices to show that 
\begin{equation}\label{equ}
\QN_p(X,Y)=\overline{\F(X,Y)}^{||\cdot||_{\QN_p}}
\end{equation}
for every Banach space $Y$. For this, let $\varepsilon>0$ and suppose that $T\in\QN_p(X,Y)$, where $Y$ is an arbitrary Banach space. By the monotonicity \eqref{monot} of the classes $\QN_r$, we have $T\in\QN_2(X,Y)$. Moreover, since $\QN_2=\overline{\F}^{\V_{\QN_2}}$ (see Remarks \ref{9113}.(ii)), there is a bounded finite-rank operator $F\in\F(X,Y)$ such that $||F-T||_{\QN_2}<\varepsilon$. It follows from \eqref{0311} that $F-T\in\varPi_2(X,Y)$ and 
\begin{equation}\label{0311_4}
||F-T||_{\varPi_2}<\varepsilon.
\end{equation}
Next, since $X$ has cotype 2, a classical result of Maurey implies that $F-T\in\varPi_1(X,Y)$ and
\begin{equation}\label{0311_3}
||F-T||_{\varPi_1}\le M\cdot||F-T||_{\varPi_2},
\end{equation} 
where $M=M_X$ is a constant that depends only on the cotype 2 constant of $X$, see \cite[Theorem 5.16]{Pisier86} or \cite[Corollary 10.18]{TJ}. It follows that $F-T\in\varPi_p(X,Y)$ and 
\begin{equation}\label{1011}
||F-T||_{\varPi_p}\le||F-T||_{\varPi_1}
\end{equation}
 since $\varPi_1\subset\varPi_p$, see \cite[Theorem 2.8]{DJT95}. By combining the estimates  \eqref{1011}, \eqref{0311_3} and \eqref{0311_4} we obtain that 
\[
||F-T||_{\varPi_p}\le ||F-T||_{\varPi_1}\le M\cdot||F-T||_{\varPi_2}<M\cdot\varepsilon.
\]
Since the constant $M$ only depends on $X$, we have $T\in\overline{\F(X,Y)}^{\V_{\varPi_p}}=\overline{\F(X,Y)}^{\V_{\QN_p}}$, where the identity follows from \eqref{2012}. We have thus established \eqref{equ}, which then yields the claim.
\end{proof}
\begin{remark}
The claim in Proposition \ref{SKp} is also indicated on \cite[p. 280]{GLT12} for reflexive Banach spaces using a different approach that involves the Saphar's approximation property of order $p$ introduced in \cite{Saphar70}.
\end{remark}
We will require the following analogue of Fact \ref{1612} for the quasi $p$-nuclear operators. This follows from \cite[Theorem 4.6]{SK08}, since the approximation property of type $p$ introduced in \cite{SK08} is the same property as the $\QN_p$-AP (see \cite[p. 499]{DPS10_2}), and $(\SK_p^{dual})^{dual}=\SK_p$ by the dualities \eqref{911} and \eqref{9112}.

\begin{fact}\label{16122}
Suppose that $1\le p<\infty$ and let $X$ be an arbitrary Banach space. Then the following are equivalent:
\begin{enumerate}[(i)]
\item $X^*$ has the $\QN_p$-AP.
\item $\SK_p(X,Y)=\overline{\F(X,Y)}^{\V_{\SK_p}}$ for every Banach space $Y$.
\end{enumerate}
\end{fact}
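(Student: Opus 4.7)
The plan is to prove Fact~\ref{16122} as the precise dual analogue of Fact~\ref{1612}, obtained by interchanging the roles of $\QN_p$ and $\SK_p$. The legitimacy of this interchange is guaranteed by the symmetric ideal dualities $\QN_p=\SK_p^{dual}$ and $\SK_p=\QN_p^{dual}$ from \eqref{911} and \eqref{9112}, which in particular yield $(\SK_p^{dual})^{dual}=\SK_p$. One can thus either run the argument underlying Fact~\ref{1612} verbatim with the two ideals swapped, or invoke \cite[Theorem~4.6]{SK08} after translating its ``approximation property of type $p$'' terminology to the $\QN_p$-AP.

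For (i) $\Rightarrow$ (ii), let $T\in\SK_p(X,Y)$. By \eqref{9112} we have $T^*\in\QN_p(Y^*,X^*)$ with $\|T^*\|_{\QN_p}=\|T\|_{\SK_p}$. The $\QN_p$-AP of $X^*$ yields finite-rank operators $F_n\colon Y^*\to X^*$ with $\|T^*-F_n\|_{\QN_p}\to 0$. The central difficulty is to replace the $F_n$ by adjoints $R_n^*$ of finite-rank operators $R_n\in\F(X,Y)$ while preserving the $\V_{\QN_p}$-approximation. I would handle this via the principle of local reflexivity, exploiting the injectivity of $\QN_p$ recorded in Remark~\ref{QNpinjhull} to keep the ideal-norm error under control. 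A second application of \eqref{9112} then gives $\|T-R_n\|_{\SK_p}\to 0$, placing $T$ in $\overline{\F(X,Y)}^{\V_{\SK_p}}$.

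For (ii) $\Rightarrow$ (i), fix a Banach space $Z$ and $S\in\QN_p(Z,X^*)$. Via \eqref{911} we obtain $S^*\in\SK_p(X^{**},Z^*)$, and pre-composing with the canonical embedding $J_X\colon X\to X^{**}$ yields the transpose $\widetilde S=S^*\circ J_X\in\SK_p(X,Z^*)$ by the operator ideal property. Hypothesis (ii), applied with $Y=Z^*$, furnishes $R_n\in\F(X,Z^*)$ with $\|\widetilde S-R_n\|_{\SK_p}\to 0$. Transposing the $R_n$ back to finite-rank operators $Z\to X^*$ produces the natural candidate, and a careful bookkeeping with \eqref{9112} combined with a further local-reflexivity argument closes the bound to deliver $\V_{\QN_p}$-approximation of $S$, hence the $\QN_p$-AP of $X^*$.

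The main obstacle in both directions is the transition between arbitrary finite-rank operators acting on dual spaces and adjoints of finite-rank operators on the original spaces; one must preserve ideal-norm (not merely operator-norm) approximation under this bridge, and this is what makes the local-reflexivity step essential and non-routine.
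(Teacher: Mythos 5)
Your proposal correctly identifies the crux of the proof of (i) $\Rightarrow$ (ii): one must pass from an arbitrary finite-rank operator $F\colon Y^*\to X^*$ (which $\QN_p$-approximates $T^*$) to a finite-rank operator of the form $R^*$ with $R\in\F(X,Y)$ — equivalently, a weak\textsuperscript{*}-continuous finite-rank operator — while keeping the \emph{ideal}-norm error small. But the local-reflexivity argument you sketch does not actually deliver this. Local reflexivity gives a $(1+\varepsilon)$-isomorphism $L$ from a finite-dimensional $E\subset Y^{**}$ into $Y$ with $\langle y^*,Ly^{**}\rangle \approx \langle y^{**},y^*\rangle$ only for $y^*$ in a prescribed finite-dimensional $G\subset Y^*$; it does \emph{not} make $\Vert y^{**}-j_Y Ly^{**}\Vert$ small, and if $Y$ is non-reflexive it cannot (take $y^{**}\in Y^{**}\setminus j_Y(Y)$). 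So a bound like $\Vert F - F'\Vert_{\QN_p}\lesssim \sum_i\Vert y_i^{**}-j_Y L y_i^{**}\Vert\,\Vert x_i^*\Vert$ is not available, and the injectivity $\QN_p=\N_p^{inj}$, which controls passage to a larger \emph{target}, gives no help on the $Y^*$-source side where the difficulty lives. As it stands this is a genuine gap.

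The paper sidesteps the issue entirely by \emph{factoring through a reflexive space}. Given $T\in\SK_p(X,Y)$, it first writes $T=VU$ with $U\in\SK_p(X,Z)$ and $V\in\K(Z,Y)$ (by \cite[Theorem 3.1]{CK10}), and then applies the DFJP factorisation to the (weakly) compact $V$ to write $V=SR$ through a reflexive space $W$. Then $RU\in\SK_p(X,W)$ and $(RU)^*\in\QN_p(W^*,X^*)$; the $\QN_p$-AP of $X^*$ gives $F\in\F(W^*,X^*)$ close to $(RU)^*$. Because $W$ is reflexive, $RU=(RU)^{**}j$ with $j\colon X\to X^{**}$, so $F^*j\in\F(X,W)$ and
\[
\Vert RU-F^*j\Vert_{\SK_p}=\Vert (RU)^{**}-F^*\Vert_{\SK_p}=\Vert (RU)^*-F\Vert_{\QN_p},
\]
after which composing with $S$ finishes. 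In other words, reflexivity of $W$ makes every finite-rank operator on $W^*$ automatically a transpose, so no local-reflexivity bridge is required. You should replace your local-reflexivity step by this factorisation argument.

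For (ii) $\Rightarrow$ (i), your plan is essentially sound and in fact needs no local reflexivity at all: given $S\in\QN_p(Z,X^*)$, set $\widetilde S=S^*j_X\in\SK_p(X,Z^*)$; (ii) furnishes $R_n\in\F(X,Z^*)$ with $\Vert\widetilde S-R_n\Vert_{\SK_p}\to 0$; by \eqref{9112}, $\Vert(\widetilde S)^*-R_n^*\Vert_{\QN_p}\to 0$; and composing with $j_Z$ and using $(\widetilde S)^*j_Z=S$ gives $R_n^*j_Z\in\F(Z,X^*)$ with $\Vert S-R_n^*j_Z\Vert_{\QN_p}\to 0$. So the closing ``further local-reflexivity argument'' you mention is superfluous there.
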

In the interest of readability we indicate a proof of the implication (i)$\Rightarrow$(ii) with more details than the proof in \cite[Theorem 4.6]{SK08} provides.
\begin{proof}[Proof of (i)$\Rightarrow $(ii)] 
Assume that $X^*$ has the $\QN_p$-AP. Let $\varepsilon>0$ and suppose that $T\in \SK_p(X,Y)$, where $Y$ is an arbitrary Banach space. According to \cite[Theorem 3.1]{CK10}, there is a Banach space $Z$ together with operators $U\in\SK_p(X,Z)$ and $V\in\K(Z,Y)$ such that $T=VU$. By the DFJP-factorisation result (see e.g. \cite[Theorem 2.g.11]{LT79}), the (weakly) compact operator $V$ admits a factorisation $V=SR$ through a reflexive Banach space $W$, where $R\in\mathcal L(Z,W)$ and $S\in\mathcal L(W,Y)$ are bounded operators. By the operator ideal property, we have $RU\in\SK_p(X,W)$, and  thus, $(RU)^*\in\QN_p(W^*,X^*)$ by the duality \eqref{9112}. 

Next, since $X^*$ has the $\QN_p$-AP by assumption, there is a bounded finite-rank operator $F\in\F(W^*,X^*)$ such that $||(RU)^*-F||_{\QN_p}<\varepsilon/||S||$. Moreover, by the reflexivity of $W$ we have $RU=(RU)^{**}j$, where $j:X\to X^{**}$ denotes the canonical isometric embedding. Consequently, by applying (BOI2) and the duality \eqref{911}, we obtain that
\[
||RU-F^*j||_{\SK_p}=||(RU)^{**}j-F^*j||_{\SK_p}\le ||(RU)^{**}-F^*||_{\SK_p}=||(RU)^*-F||_{\QN_p}<\varepsilon/||S||.
\]
It follows that
\[||T-SF^*j||_{\SK_p}=||SRU-SF^*j||_{\SK_p}\le ||S||\,||RU-F^*j||_{\SK_p}<\varepsilon,\]
which shows that $T\in\overline{\F(X,Y)}^{\V_{\SK_p}}$.
\end{proof}
\begin{remark}
One can also establish the implication (i)$\Rightarrow$(ii) of Fact \ref{1612} in a similar way using the factorisation of quasi $p$-nuclear operators in \cite[Lemma 5]{PP69} and the DFJP-factorisation. This gives an alternative proof of the implication (1)$\Rightarrow$(5) in \cite[Theorem 2.3]{DPS10_2}.
\end{remark}
We are now in position to establish the main result of this section, which shows that the quotient algebras $\QAQN$ and $\QASK$ are trivial for large classes of Banach spaces. The case $p=2$ is obvious since $\QN_2$ and $\SK_2$ are approximative Banach operator ideals (see Remarks \ref{9113}.(ii)) and thus $\QA_X^{\QN_2}=\{0\}$ and $\QA_X^{\SK_2}=\{0\}$ for all Banach spaces $X$.
\begin{thm}\label{trivialquotient}
Let $X$ be a Banach space. \begin{enumerate}[(i)]
\item Suppose that $X$ has cotype 2. Then $\mathfrak A_X^{\QN_p}=\{0\}$ for all $1\le p<\infty$.
\item Suppose that $X^*$ has cotype 2. Then $\mathfrak A_X^{\SK_p}=\{0\}$ for all $1\le p<\infty$. 
\end{enumerate}
\end{thm}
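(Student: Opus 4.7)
The plan is to split into three cases according to the value of $p\in[1,\infty)$ and reduce both assertions to the approximation-property results \ref{QNpcotype2} and \ref{SKp} via the characterisations in Facts \ref{1612} and \ref{16122}, falling back on a direct Maurey-style computation only in the one subcase where duality does not suffice.

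The case $p=2$ is immediate from Remarks \ref{9113}(ii): since both $\QN_2$ and $\SK_2$ are approximative Banach operator ideals, one has $\QA_X^{\QN_2}=\{0\}=\QA_X^{\SK_2}$ for every Banach space $X$, with no cotype hypothesis needed.

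For $p>2$, part (i) follows by applying Proposition \ref{QNpcotype2} to the cotype $2$ space $X$ and specialising the resulting $\QN_p$-AP identity to $Y=X$, which yields $\QN_p(X)=\overline{\F(X)}^{\V_{\QN_p}}$. For (ii), I would apply Proposition \ref{QNpcotype2} to $X^*$ (cotype $2$ by assumption) to obtain the $\QN_p$-AP of $X^*$, and then invoke Fact \ref{16122} with $Y=X$ to conclude $\SK_p(X)=\overline{\F(X)}^{\V_{\SK_p}}$.

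For $1\le p<2$, part (i) is analogous: Proposition \ref{SKp} applied to $X$ gives the $\SK_p$-AP of $X^*$, and Fact \ref{1612} with $Y=X$ closes the argument. Part (ii) is the main obstacle, because the naive reduction via Proposition \ref{SKp} applied to $X^*$ produces only the $\SK_p$-AP of $X^{**}$, which through Fact \ref{1612} controls $\QN_p$-operators on $X^*$ by arbitrary finite-rank operators on $X^*$ rather than by adjoints of operators on $X$, and so does not translate back to an $\SK_p$-approximation of the original $T$ on $X$; this reflects the subtle relationship between $\QA_X^{\SK_p}$ and $\QA_{X^*}^{\QN_p}$ in the non-reflexive setting indicated earlier in Section \ref{section3}. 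I would therefore repeat the Maurey-type argument from the proof of Proposition \ref{SKp} on the adjoint side: given $T\in\SK_p(X)$, pick $F\in\F(X)$ with $||T-F||_{\SK_2}$ small (using the monotonicity \eqref{monotonicity} and the approximativity of $\SK_2$), dualise through \eqref{911} and \eqref{1433} to a small $\varPi_2$-norm of $(T-F)^*$ on $X^*$, then apply Maurey's theorem (using cotype $2$ of $X^*$) together with the inclusion $\varPi_1\subset\varPi_p$ to upgrade to a small $\varPi_p$-norm, and finally return to $||T-F||_{\SK_p}$ via a further application of \eqref{1433} and \eqref{911}. This places $T$ in $\overline{\F(X)}^{\V_{\SK_p}}$ and completes the proof.
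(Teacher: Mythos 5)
Your proof is correct, and it coincides with the paper's argument in every case except part (ii) with $1\le p<2$. There the paper first applies Proposition \ref{SKp} to $X^*$ to obtain the $\SK_p$-AP of $X^{**}$ and then quotes an external descent result, namely \cite[Corollary 3.5]{DPS10_2}, which states that the $\SK_p$-AP of $X^{**}$ implies that of $X$; this immediately gives $\QA_X^{\SK_p}=\{0\}$. You correctly sensed that a naive application of Fact \ref{1612} does not close the loop in the non-reflexive setting, but you did not notice that the bidual descent is precisely what \cite[Corollary 3.5]{DPS10_2} supplies. Instead you replaced it by a self-contained Maurey-type computation on the adjoint side: approximate $T\in\SK_p(X)$ in $\SK_2$-norm, pass to $(T-F)^*\in\QN_2(X^*)$ via \eqref{9112}, identify the $\QN_r$- and $\varPi_r$-norms on $\QN_r$ via \eqref{1433}, upgrade $\varPi_2\to\varPi_1\to\varPi_p$ via Maurey's theorem and the monotonicity of the $\varPi_r$-scale, observe that $(T-F)^*$ already lies in $\QN_p(X^*)$ since $T\in\SK_p(X)$ and $F$ has finite rank, and finally return to $||T-F||_{\SK_p}$ by the isometric duality \eqref{9112}. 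The chain of estimates is sound and the constant from Maurey's theorem depends only on the cotype $2$ constant of $X^*$, so your argument does establish $T\in\overline{\F(X)}^{\V_{\SK_p}}$. The trade-off is that the paper's route is shorter here at the cost of invoking an external result, whereas your route mirrors the internal proof of Proposition \ref{SKp} and keeps the argument self-contained apart from Maurey's theorem, which is already used elsewhere in the section.
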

\begin{proof}
(i) Suppose first that $1\leq p<2$. Then the dual space $X^*$ has the $\SK_p$-AP by Proposition \ref{SKp}. Thus $\mathfrak A_X^{\QN_p}=\{0\}$ by applying Fact \ref{1612}.
 
Next, suppose that $2< p<\infty$. Then $X$ has the $\QN_p$-AP by Proposition \ref{QNpcotype2} and consequently $\QAQN=\{0\}$.
\medskip

(ii) Suppose first that $1\le p<2$. Then $X^{**}$ has the $\SK_p$-AP by Proposition \ref{SKp}.  It follows that $X$ has the $\SK_p$-AP by \cite[Corollary 3.5]{DPS10_2} and thus $\QASK=\{0\}$.

Next, suppose that $2<p<\infty$. Then $X^*$ has the $\QN_p$-AP by Proposition \ref{QNpcotype2}. It follows from Fact \ref{16122} that $\QASK=\{0\}$.
\end{proof} 

Recall the long-standing open question whether $\K(X)=\A(X)$, that is, whether $\QA_X=\{0\}$ implies that the Banach space $X$ has the AP, see \cite[Problem 1.e.9]{LT77} or \cite[Problem 2.7]{Casazza01}. To conclude this section, we show that Theorem \ref{trivialquotient} together with the Banach spaces found in Examples \ref{Ex:33} and \ref{Ex:34} provides a negative answer to the analogous question for $\QN_p$ in the case of $1\leq p<2$ and for $\SK_p$ in the case of $2<p<\infty$. 

 \begin{ex}\label{trivialquotient2}
\begin{enumerate}[(i)]
\item Suppose that $1\le p<2$. Then there exists a Banach space $X$ that fails the $\QN_p$-AP for which $\QA_X^{\QN_p}=\{0\}$.
\item Suppose that $2< p<\infty$. Then there exists a Banach space $X$ that fails the $\SK_p$-AP for which $\QA_X^{\SK_p}=\{0\}$.
\end{enumerate}
\end{ex}
\begin{proof}
(i) By Example \ref{Ex:33}.(i) there is a closed subspace $X\subset\ell^p$ that fails the $\QN_p$-AP. Since $X$ has cotype 2, we have $\QA_{X}^{\QN_p}=\{0\}$ by Theorem \ref{trivialquotient}.(i).
\medskip

(ii) Let $X\subset\ell^p$ be a closed subspace that fails the $\SK_p$-AP, see Example \ref{Ex:34}.(i). Since $X$ has type 2, the dual space $X^*$ has cotype 2 according to \cite[Proposition 11.10]{DJT95}. Thus $\mathfrak A_X^{\SK_p}=\{0\}$ by Theorem \ref{trivialquotient}.(ii).
\end{proof}

\section{The index of nilpotency of the quotient algebras \texorpdfstring{$\QAQN$}{QAQNp} and \texorpdfstring{$\QASK$}{QASKp}}\label{section4}

An algebra $A$ is called \emph{nilpotent} if there is an integer $n\in\mathbb N$ such that 
\begin{equation}\label{nilpotent}
a_1\cdots a_n=0\text{ for all }a_1,\ldots,a_n\in A.
\end{equation} 
For a nilpotent algebra $A$ we call the smallest $n\in\mathbb N$ such that \eqref{nilpotent} holds the \emph{index} of $A$, denoted $index(A)$. 

In this section we discuss the nilpotency of the quotient algebras $\QAQN$ and $\QASK$. It follows from a multiplication rule due to Persson and Pietsch \cite{PP69} that these quotient algebras are nilpotent for all Banach spaces $X$. More precisely, the index of both $\QAQN$ and $\QASK$ is bounded from above by $\max\{2, \left\lceil p/2\right\rceil\}$, as we will show in Proposition \ref{prop41}. Here $\left\lceil x\right\rceil:=\min\{n\in\mathbb N\mid n\ge x\}$ for any positive real number $x>0$. In the main result of this section (Theorem \ref{mainsection4}) we show that this upper bound is exact in the sense that for every $p\neq 2$ there is a closed subspace $X\subset c_0$ such that 
\begin{equation}\label{identities}
index(\QAQN)=index(\QASK)=\max\{2,\left\lceil p/2\right\rceil\}.
\end{equation}
The key observation (Proposition \ref{prop43}) for the identities \eqref{identities} in the case $2<p<\infty$ is obtained by a modification of a sophisticated argument of Reinov \cite[Lemma 1.1]{Reinov82} which involves the matrix constructed by Davie \cite{Da75} related to the failure of the AP. Proposition \ref{prop43} is also applied in Section \ref{section5} when exhibiting non-trivial closed ideals of the quotient algebras $\QAQN$ and $\QASK$ as well as of the compact-by-approximable algebra $\QA_X$, for specific closed subspaces $X\subset\ell^p$ and $X\subset c_0$.
\smallskip

We recall the following multiplication rule for quasi $p$-nuclear operators \cite[Satz 48]{PP69}:
let $p,q,r\geq 1$ be such that $1/r=1/p+1/q$. Then 
\begin{equation}\label{multiplication}
VU\in\QN_r
\end{equation}
whenever $U\in\QN_p$ and $V\in\QN_q$ are compatible quasi $p$-nuclear operators.
By iterating the above formula we have
\begin{equation}\label{powermultiplication}
T_n \cdots T_2T_1\in\QN_{p/n}
\end{equation}
whenever $n\in\mathbb N$ is such that $1\le n\le p$ and $T_1,\ldots, T_n\in\QN_p$ are compatible operators.

By duality, the following analogue of \eqref{multiplication} holds for the Sinha-Karn $p$-compact operators: let $p,q,r\ge 1$ be such that $1/r=1/p+1/q$. Then
\begin{equation}\label{multip}
TS\in\SK_r
\end{equation}
whenever $S\in\SK_p$ and $T\in \SK_q$ are compatible Sinha-Karn $p$-compact operators. In fact, suppose that $S\in \SK_p(X,Y)$ and $T\in\SK_q(Y,Z)$ for arbitrary Banach spaces $X,Y$ and $Z$. By the duality \eqref{9112}, we have $S^*\in\QN_p(Y^*,X^*)$ and $T^*\in\QN_q(Z^*,Y^*)$, and consequently $(TS)^*=S^*T^*\in\QN_r(Z^*,X^*)$ by \eqref{multiplication}. Thus $TS\in \SK_r(X,Z)$ by \eqref{9112}. 

By iteration we have
\begin{equation}\label{powermultiSKp}
R_n\cdots R_2R_1\in\SK_{p/n}
\end{equation}
whenever $n\in\mathbb N$ is such that $1\le n\le p$ and $R_1,\ldots,R_n\in\SK_p$ are compatible operators.
\smallskip

We proceed with establishing that $\max\{2,\left\lceil\frac{p}{2}\right\rceil\}$ is an upper bound of the index of the quotient algebras $\QAQN$ and $\QASK$. Here again, the case of $p=2$ is obvious since $\QN_2=\overline{\F}^{\V_{\QN_2}}$ and $\SK_2=\overline{\F}^{\V_{\SK_2}}$ are approximative Banach operator ideals, see Remarks \ref{9113}.(ii). For the case of $1\le p<2$, we note further that
\begin{equation}\label{QNple2}
\QN_p\subset\QN_2\subset\A\quad\text{and}\quad\SK_p\subset\SK_2\subset\A
\end{equation}
for all $1\le p< 2$ by the monotonicity  \eqref{monot} and \eqref{monotonicity}.

\begin{prop}\label{prop41}
Suppose that $1\le p<\infty$ and let $\I=\QN_p$ or $\I=\SK_p$. Then 
\[index(\QA_X^\I)\le \max\{2,\left\lceil\frac{p}{2}\right\rceil\}\]
for any Banach space $X$.
\end{prop}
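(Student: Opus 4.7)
The plan is to split the analysis by the value of $p$ and exploit the iterated multiplication rules \eqref{powermultiplication} and \eqref{powermultiSKp} together with the fact that both $\QN_2$ and $\SK_2$ are approximative Banach operator ideals (Remarks \ref{9113}.(ii)). I work out the $\QN_p$-case explicitly; the $\SK_p$-case is identical after replacing \eqref{powermultiplication} by \eqref{powermultiSKp}.

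The cases $p=2$ and $1\le p<2$ are immediate. For $p=2$, $\QA_X^{\QN_2}=\{0\}$ since $\QN_2=\overline{\F}^{\V_{\QN_2}}$, so the bound holds vacuously. For $1\le p<2$, the inclusion \eqref{QNple2} gives $\QN_p\subset\A$, hence for any $T_1,T_2\in\QN_p(X)$ I apply Lemma \ref{minimal}.(ii) with $A=T_1\in\A(X)$ and $B=T_2\in\QN_p(X)$ to conclude $T_2T_1\in\overline{\F(X)}^{\V_{\QN_p}}$, i.e. $index(\QA_X^{\QN_p})\le 2=\max\{2,\lceil p/2\rceil\}$.

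The main case is $2<p<\infty$. Set $n:=\lceil p/2\rceil$; then $n\ge 2$, $n\le p$, and $p/n\le 2$, so the iterated multiplication rule \eqref{powermultiplication} applies. For arbitrary $T_1,\ldots,T_n\in\QN_p(X)$ it yields $T_n\cdots T_1\in\QN_{p/n}(X)$, and the monotonicity \eqref{monot} then places this product in $\QN_2(X)=\overline{\F(X)}^{\V_{\QN_2}}$. Hence there exist $F_k\in\F(X)$ with $||T_n\cdots T_1-F_k||_{\QN_2}\to 0$. The concluding step is the norm comparison: since $p\ge 2$, the Banach operator ideal inclusion $\QN_2\subset\QN_p$ forces $\V_{\QN_p}\le\V_{\QN_2}$ on $\QN_2(X)$, so the same sequence $(F_k)$ satisfies $||T_n\cdots T_1-F_k||_{\QN_p}\to 0$. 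Thus $T_n\cdots T_1\in\overline{\F(X)}^{\V_{\QN_p}}$, which gives $index(\QA_X^{\QN_p})\le n=\lceil p/2\rceil$.

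No step presents a serious obstacle once the multiplication rules are combined with the approximativity of the endpoint ideals $\QN_2$ and $\SK_2$. The only point requiring a moment's care is transferring $\V_{\QN_2}$-approximability to $\V_{\QN_p}$-approximability (and similarly for $\SK$) when $p\ge 2$; this is automatic from the general inequality $\V_\J\le\V_\I$ built into the definition of the inclusion $\I\subset\J$ of Banach operator ideals.
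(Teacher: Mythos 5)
Your proof is correct and follows essentially the same route as the paper: split by $p$, use the iterated multiplication rule plus monotonicity to land the $\lceil p/2\rceil$-fold product in $\QN_2(X)$ (resp.\ $\SK_2(X)$), and then use approximativity of $\QN_2$ (resp.\ $\SK_2$) together with $\V_{\QN_p}\le\V_{\QN_2}$ to conclude membership in $\overline{\F(X)}^{\V_{\QN_p}}$. The only cosmetic difference is that for $1\le p<2$ you apply Lemma~\ref{minimal}.(ii) while the paper applies part (i); both are valid single-line arguments.
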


\begin{proof}
Let $X$ be an arbitrary Banach space and denote $m:=\max\{2,\left\lceil p/2\right\rceil\}$. We consider the cases $1\le p<2$ and $2<p<\infty$ separately. Note that $m=2$ if $1\le p<2$ and $m=\left\lceil p/2\right\rceil$ if $p>2$.

Suppose first that $1\le p< 2$ and let $S\in\I(X)$ and $T\in\I(X)$ be arbitrary. By \eqref{QNple2} we have $T\in\A(X)$ and consequently $TS\in\overline{\F(X)}^{\V_{\I}}$ by Lemma \ref{minimal}.(i). It follows that
\[(T+\FI)\cdot(S+\FI)=TS+\FI=0\in\QA_X^\I,\] 
and consequently $index(\QA_X^\I)\le 2=m$.

Next, suppose that $2<p<\infty$ and let $T_1,T_2,\ldots, T_{m}\in\QN_p(X)$ be arbitrary. By the multiplication rule \eqref{powermultiplication} we have $T_m\cdots T_2T_1\in\QN_{p/m}(X)$. Since $p/m\le 2$, the monotonicity \eqref{monot} yields
\[
T_m\cdots T_2T_1\in\QN_2(X).
\]
Moreover, since $\QN_2$ is approximative and $||\cdot||_{\QN_p}\le||\cdot||_{\QN_2}$, we have
\[
\QN_2(X)=\overline{\F(X)}^{\V_{\QN_2}}\subset\overline{\F(X)}^{\V_{\QN_p}},
\]
and thus $T_m\cdots T_2T_1\in\overline{\F(X)}^{\V_{\QN_p}}$. Then
\[\big(T_m+\overline{\F(X)}^{\V_{\QN_p}}\big)\cdots\big(T_1+\overline{\F(X)}^{\V_{\QN_p}}\big)=T_m\cdots T_2T_1+\overline{\F(X)}^{\V_{\QN_p}}=0\in\QAQN,\]
which shows that $index(\QA_X^{\QN_p})\le m$.

Using the multiplication rule \eqref{powermultiSKp} and the monotonicity \eqref{monotonicity} for the Sinha-Karn $p$-compact operators, a similar proof yields that $index(\QASK)\le m$. 
\end{proof}
\begin{remark}\label{141}
The fact that the quotient algebras $\QAQN$ and $\QASK$ are nilpotent for every Banach space $X$ is in sharp contrast with the current state of knowledge of the compact-by-approximable algebra $\QA_X=\K(X)/\A(X)$. In fact, it is not known whether there is a Banach space $X$ such that $\QA_X$ is non-trivial and nilpotent. However, we recall that there are known examples of Banach spaces $X$ such that  $\QA_X$ is non-nilpotent, see \cite[Proposition 3.1(i)]{TW21} and \cite[Theorem 2.9]{TW22}.
\end{remark}

We proceed towards the main result of this section (Theorem \ref{mainsection4}), which exhibits for each $p\neq 2$ a closed subspace $X\subset c_0$ for which the index of both $\QA_X^{\QN_p}$ and $\QASK$ is exactly the upper bound $\max\{2,\left\lceil p/2\right\rceil\}$ given in Proposition \ref{prop41}. The key observation for the case of $2<p<\infty$ is Proposition \ref{prop43} below, which is the most technical result of this paper. The proof is based on a factorisation argument \cite[Lemma 1.1]{Reinov82} due to Reinov. For another  modification of Reinov's argument of this type, see \cite[Theorem 3.9]{TW22}.

For the proof, recall from e.g. \cite[18.1 and 18.2]{Pietsch80} that the bounded operator $T\in\mathcal L(X,Y)$ is $p$-nuclear, denoted $T\in\mathcal N_p(X,Y)$, if there is a strongly $p$-summable sequence $(x_k^*)\in\ell^p_s(X^*)$ and a weakly $p'$-summable sequence $(y_k)\in \ell^{p'}_w(Y)$ such that $T=\sum_{k=1}^\infty x_k^*\otimes y_k$, that is,
\[
Tx=\sum_{k=1}^\infty x_k^*(x)y_k,\qquad x\in X.
\]
\begin{prop}\label{prop43}
Suppose that $2<p<\infty$ and let $m:=\left\lceil p/2\right\rceil$. Then there is a closed subspace $X\subset c_0$ together with operators $T\in\QN_p(X)$ and $U\in\SK_p(X)$ such that $T^{m-1}\notin \A(X)$ and $U^{m-1}\notin \A(X)$. 
\end{prop}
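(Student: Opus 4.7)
The plan is to refine the factorisation method of Reinov \cite[Lemma 1.1]{Reinov82}, which itself builds on Davie's \cite{Da75} matrix construction of a closed subspace of $c_0$ failing the classical AP. The driving observation is that although the multiplication rule \eqref{powermultiplication} places $T^{m-1}$ in $\QN_{p/(m-1)}(X)$ for any $T \in \QN_p(X)$, when $m = \lceil p/2 \rceil$ the exponent satisfies $p/(m-1) > 2$, so $T^{m-1}$ is \emph{not} automatically approximable (in contrast with the argument of Proposition \ref{prop41}, which only applied when the iterated exponent fell to $2$). This leaves room, in principle, for $T^{m-1} \notin \A(X)$, and the task is to realise this room with a sufficiently careful matrix.

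Concretely, I would take $X$ to be the closed linear span in $c_0$ of appropriately chosen rows of a scaled Davie matrix $A = (a_{ij})$, scaled so that its columns form a strongly $p$-summable sequence in $X^*$. The operator $T \in \QN_p(X)$ is then defined via a representation $Tx = \sum_{k} x_k^*(x) y_k$ with $(x_k^*) \in \ell^p_s(X^*)$ extracted from the columns of $A$ and $(y_k)$ a bounded sequence of coordinate vectors in $X$; membership in $\QN_p(X)$ and the required bound follow directly from \eqref{rt}. By \eqref{powermultiplication}, $T^{m-1} \in \QN_{p/(m-1)}(X)$, and under the matrix identification $T^{m-1}$ corresponds to the $(m-1)$-fold matrix product $A^{m-1}$. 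For the Sinha--Karn analogue, I would use the transposed matrix $A^{\top}$ on the same $X$: the rows of $A$ then give a strongly $p$-summable sequence in $X$ realising the inclusion \eqref{t}, so the corresponding operator $U$ lies in $\SK_p(X)$, and \eqref{powermultiSKp} places $U^{m-1}$ in $\SK_{p/(m-1)}(X)$.

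To conclude that $T^{m-1} \notin \A(X)$ and $U^{m-1} \notin \A(X)$, I would follow Davie and Reinov in constructing a continuous linear functional $\tau$ on $\K(X)$ that annihilates every $F \in \F(X)$ (hence all of $\A(X)$) but satisfies $\tau(T^{m-1}) \neq 0$. Such $\tau$ arises from the diagonal/trace functional inherent to Davie's construction; the key estimate is a lower bound on a weighted diagonal sum of the iterated product $A^{m-1}$ that relies on the sign distribution and orthogonality properties built into Davie's matrix, combined with a matching upper bound on finite-rank approximants. The same $\tau$, read against the transposed matrix, simultaneously separates $U^{m-1}$ from $\A(X)$. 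Extending $\tau$ by Hahn--Banach if necessary and evaluating on $T^{m-1}$ and $U^{m-1}$ yields the desired non-approximability.

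The main obstacle, and the most technically demanding step, is the simultaneous balancing required of the matrix $A$: its columns must be $p$-summable in the sense needed for $T \in \QN_p$, its rows must be $p$-summable in the sense needed for $U \in \SK_p$, the subspace $X$ they span must sit isometrically inside $c_0$, and crucially the $(m-1)$-fold entrywise convolution defining $A^{m-1}$ must retain a diagonal detectable by $\tau$. This combinatorial tightrope is already delicate in Reinov's original argument, where a single application of the matrix suffices; here one must propagate the estimates through $m-1$ compositions without destroying summability in either direction. Once the matrix is chosen, verifying the $\QN_p$ (resp.\ $\SK_p$) membership and the non-approximability of the $(m-1)$-th power reduces to the explicit scalar estimates supplied by Davie's construction.
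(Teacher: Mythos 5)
The central step of your plan — identifying $T^{m-1}$ with the matrix power $A^{m-1}$ and detecting a nonzero diagonal of $A^{m-1}$ via a trace functional — cannot work, because Davie's matrix is built precisely so that $A^2=0$; this nilpotency, together with $\mathrm{tr}\,A\neq 0$ and the summability of $\lambda_k=\sup_r|a_{k,r}|$, is exactly what makes the trace functional vanish on finite-rank operators while not vanishing on the identity. But $A^2=0$ forces $A^{m-1}=0$ as soon as $m\ge 3$, i.e.\ whenever $p>4$, so the proposed detection $\tau(T^{m-1})=\tau(A^{m-1})\neq 0$ is impossible in the very range of $p$ where the statement is nontrivial. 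Rescaling $A$ does not rescue the argument: the mechanism by which $\tau$ annihilates $\F(X)$ in the Davie--Reinov scheme depends on the nilpotency, so discarding $A^2=0$ to get $A^{m-1}\neq 0$ simultaneously destroys the reason $\tau$ ignores approximable operators. What you would actually need is a fresh ``higher-order Davie matrix'' with $A^m=0$, $\mathrm{tr}(A^{m-1})\neq 0$ and the same sup-row summability — a substantially harder construction than a scaling, and one your sketch does not supply. The treatment of $U$ via $A^\top$ on the same $X$ compounds the difficulty: besides inheriting the same nilpotency obstacle, it is not clear that $A^\top$ acts on $X$ at all (as $\SK_p=\QN_p^{dual}$ lives more naturally on duals or quotients), so $U\in\SK_p(X)$ is not established.

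The paper's proof avoids powers of $A$ altogether by using a block-shift. The Davie matrix is factored as $A=\Delta V$, and the diagonal $\Delta$ is sliced into $n=m-1$ fractional diagonal factors $\Delta_k$ built from $\lambda_k^a$ with $2/(3p)<a<1/(3n)$; these give quasi-$p$-nuclear maps $\widetilde\Delta_k:Y_k\to Y_{k+1}$ between nested closed subspaces of $c_0$. The operator $T$ is then a one-step shift on the direct sum $X=(Y_0\oplus\cdots\oplus Y_n)\oplus(Z_0\oplus\cdots\oplus Z_n)\subset c_0$, so that $T^{n}$ is a single long shift, not a matrix power, and the pairing $\Phi(\,\cdot\,)=\mathrm{tr}(\Delta_n j_n\,\cdot\,Q)$ evaluated on $\widetilde\Delta_{n-1}\cdots\widetilde\Delta_0 S$ recovers $\mathrm{tr}(\Delta V)=\mathrm{tr}(A)\neq 0$ even though $A^2=0$. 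Each factor $\widetilde\Delta_k$, and hence the matrix $A$ itself, appears exactly once in the composite. The Sinha--Karn operator $U$ is obtained by dualising the $\widetilde\Delta_k$ to get $\SK_p$ maps, compactly factoring them through closed subspaces $Z_k\subset c_0$ via Terzio\u glu's theorem, and forming the opposite shift; non-approximability of $U^n$ then follows from that of $T^n$ by taking adjoints. If you want to salvage your proposal you should abandon ``power of $A$'' in favour of this ``long product of distinct diagonal slices'' structure.
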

\begin{proof}
In order to simplify the notation in the argument we denote $n:=m-1$ so that
$2n<p\le 2(n+1)$. We consider the cases $2<p\le 4$ and $4<p<\infty$ separately. 

(i) Suppose first that $2<p\le 4$. Here the claim follows by a very similar argument as Example \ref{Ex:33}.(ii), since in this case $n=1$. In fact, in view of \cite[Lemma 1.1]{Reinov82} there is an operator 
\begin{equation*}
S\in\QN_p(Y,Z)\setminus\A(Y,Z)
\end{equation*}
for suitable reflexive Banach spaces $Y$ and $Z$. We then apply \cite[Lemma 5]{PP69} to obtain operators $A\in\K(Y,M)$, $\widehat R\in\QN_p(M,N)$ and $B\in\K(N,Z)$ such that $S=B\widehat RA$. By Terzio\v{g}lu's result \cite{Terzioglu71}, the compact operators $A$ and $B$ admit compact factorisations $A=A_2A_1$ and $B=B_2B_1$ through closed subspaces $M_1\subset c_0$ and $M_2\subset c_0$, respectively. Let $R:=B_1\widehat R A_2$. Since $\widehat R\in\QN_p(M,N)$ and $S=B_2RA_1\notin\A(Y,Z)$, we have
\[R\in\QN_p(M_1,M_2)\setminus \A(M_1,M_2)\] 
by the operator ideal property.

Next, by the duality \eqref{911} and reflexivity we have $S^*\in\SK_p(Z^*,Y^*)\setminus\A(Z^*,Y^*)$. According to \cite[Proposition 2.9]{GLT12}, there are operators $C\in\K(Z^*,M_3)$, $V\in\SK_p(M_3,M_4)$ and $D\in\K(M_4, Y^*)$ such that $S^*=DVC$. By a similar argument as above, we may assume (by applying \cite{Terzioglu71}) that $M_3$ and $M_4$ are closed subspaces of $c_0$. Observe that $V\notin\A(M_3,M_4)$ by the operator ideal property, since $S^*=DVC\notin\A(Z^*,Y^*)$.

Finally, consider the closed subspace $X:=M_1\oplus M_2\oplus M_3\oplus M_4\subset c_0$. Let $J_i:M_i\to X$ denote the natural isometric embedding and let $P_i:X\to M_i$ denote the natural projection for $i\in\{1,2,3,4\}$. By the operator ideal property we have that $J_2RP_1\in\QN_p(X)\setminus\A(X)$ and $J_4VP_3\in\SK_p(X)\setminus\A(X)$. This proves the claim for $2<p\le 4$. 
\smallskip

(ii) Suppose next that $4<p<\infty$ so that $n\ge 2$. Here we use a slightly modified version of the argument of Reinov in \cite[Lemma 1.1]{Reinov82}. The starting point is the infinite matrix $A=(a_{i,j})_{i,j=1}^\infty$ of scalars constructed by Davie \cite{Da73, Da75} (see also \cite[Theorem 2.d.3]{LT77}) with the following properties:
\begin{enumerate}[(i)]
\item $A^2=0$,
\item tr $A:=\sum_{k=1}^\infty a_{k,k}\neq 0$ and
\item $\sum_{k=1}^\infty \lambda_k^\alpha<\infty$ for all $\alpha>2/3$, where $\lambda_k:=\sup_{r\in\mathbb N}|a_{k,r}|>0$ for all $k\in\mathbb N$. 
\end{enumerate}
The matrix $A$ defines the following 1-nuclear operator
\[A:\ell^1\to\ell^1,\quad Ax=\big(\sum_{r=1}^\infty a_{k,r}x_r\big)_{k=1}^\infty,\quad x=(x_k)\in\ell^1\] 
for which properties (i) and (ii) hold. In fact, $A=\sum_{k=1}^\infty a_k\otimes e_k$, where $a_k:=(a_{k,r})_{r=1}^\infty\in\ell^\infty$ for all $k\in\mathbb N$ and $(e_k)$ denotes the unit vector basis of $\ell^1$. Since $(a_k)\in\ell^1_s(\ell^\infty)$ by (iii), we have $A\in\mathcal N_1(\ell^1)$.

We proceed by defining relevant spaces and operators, which are displayed in the commuting diagram \eqref{diagram} below. We start by defining the following bounded operators:
\begin{align*}
&V:\ell^1\to\ell^\infty,\quad Vx=(\lambda_k^{-1}\sum_{r=1}^\infty a_{k,r}x_r)_{k=1}^\infty,\quad x=(x_k)\in\ell^1,\\
&\Delta:\ell^\infty\to\ell^1,\quad \Delta x=(\lambda_k x_k)_{k=1}^\infty,\quad x=(x_k)\in\ell^\infty.
\end{align*}
Here $V$ is bounded (with $||V||\le 1$) since $|a_{k,r}|\le \lambda_k$ for all $k,r\in\mathbb N$, and $\Delta\in\mathcal N_1(\ell^\infty,\ell^1)$ since $(\lambda_k)\in\ell^1$ by (iii). Moreover, clearly $A=\Delta V$.

Next, recall that $p>2n$ since $n=\left\lceil p/2\right\rceil-1$ by definition. Thus we may pick a positive real number $a>0$ such that
\begin{equation}\label{aisbetween}
\frac{2}{3p}<a<\frac{1}{3n}.
\end{equation}
Define the following diagonal operator:
\[D:\ell^\infty\to c_0,\quad Dx=(\lambda_k^ax_k)_{k=1}^\infty,\quad x=(x_k)\in\ell^\infty.\]
Here $ap>2/3$ by \eqref{aisbetween} and thus $(\lambda_k^a)\in\ell^p$ by property (iii). This yields that $D\in\mathcal N_p(\ell^\infty,c_0)$. By applying \cite[Proposition 5.23]{DJT95}, we obtain operators $K\in \K(\ell^\infty,c_0)$ and $\Delta_0\in \mathcal N_p(c_0)$ such that $D=\Delta_0 K$.

For each $k\in\{1,\ldots,n-1\}$ let $\Delta_k=D|_{c_0}:c_0\to c_0$ denote the $p$-nuclear restriction of $D$. Also, define the following diagonal operator:
\[\Delta_n:c_0\to\ell^1, \quad \Delta_n x=(\lambda_k^{1-na}x_k)_{k=1}^\infty,\quad x=(x_k)\in c_0.\]
By \eqref{aisbetween} we have $1-na>2/3$. Thus $(\lambda_k^{1-na})\in\ell^1$ by (iii) and consequently $\Delta_n\in\mathcal N_1(c_0,\ell^1)$. Clearly $\Delta=\Delta_n\cdots\Delta_2\Delta_1 D$, since $1-na+(n-1)a+a=1$.

Next, let $V=\widetilde V Q$ be the canonical factorisation of the bounded operator $V\in\mathcal L(\ell^1,\ell^\infty)$ through the quotient space $E:=\ell^1/\ker V$. Define the closed subspace $Y_0:=\overline{KV\ell^1}\subset c_0$ and thereafter the operator \[S:E\to Y_0, \quad Sx=K\widetilde Vx,\quad x\in E.\]
This means that $j_0S=K\widetilde V$, where $j_0:Y_0\hookrightarrow c_0$ is the inclusion map. Consequently, $S\in\K(E,Y_0)$ since $K\widetilde V\in\K(E,c_0)$.

Proceed by defining successively for each $k\in\{0,\ldots,n-1\}$ the closed subspace $Y_{k+1}:=\overline{\Delta_{k}j_{k}Y_{k}}\subset c_0$, where $j_{k}:Y_{k}\hookrightarrow c_0$ is the inclusion map. Thereafter, define for each $k\in\{0,\ldots,n-1\}$ the operator 
\[\widetilde\Delta_{k}:Y_{k}\to Y_{k+1},\quad \widetilde\Delta_k y= \Delta_{k}j_{k}y,\quad y\in Y_k.\]
This means that $j_{k+1}\widetilde\Delta_k=\Delta_kj_k$ for each $k\in\{0,\ldots,n-1\}$, where $j_n:Y_n\hookrightarrow c_0$ also denotes the inclusion map for $k+1=n$. According to \cite[Satz 39]{PP69}, we have $\widetilde\Delta_k\in\QN_p(Y_k,Y_{k+1})$ for all $k\in\{0,\ldots,n-1\}$. 
 
The above defined spaces and operators are illustrated in the following commuting diagram:

\begin{equation}\label{diagram}
\begin{tikzcd}
\ell^{1} \arrow{r}{V} \arrow[d,swap,"Q"] &\ell^\infty\arrow{d}{K}\arrow{rd}{D} \arrow{rrrr}{\Delta} &&&&\ell^{1}\\
E\arrow{dr}{S}\arrow{ur}{\widetilde V} & c_0\arrow{r}{\Delta_0}& c_0 \arrow{r}{\Delta_1}&c_0\arrow{r}{\Delta_2}&\cdots\arrow{r}{\Delta_{n-1}}&c_0\arrow[swap]{u}{\Delta_n}\\
&Y_0\arrow[u,hook,swap,"j_0"]\arrow{r}{\widetilde{\Delta}_0}&Y_1\arrow{r}{\widetilde\Delta_1}\arrow[u,hook,swap,"j_1"]&Y_2\arrow{r}{\widetilde\Delta_2}\arrow[u,hook,swap,"j_2"]&\cdots\arrow{r}{\widetilde\Delta_{n-1}}&Y_n\arrow[u,hook,swap,"j_n"]\\
\end{tikzcd} 
\end{equation}
\textbf{Claim.}
We claim that
\begin{equation}\label{productnotapprox}
\widetilde\Delta_{n-1}\cdots\widetilde\Delta_1\widetilde\Delta_0S\notin \A(E,Y_n).
\end{equation}
Towards this, define
\[\Phi(U)=\text{tr}(\Delta_nj_nUQ)\]
for all $U\in\mathcal L(E,Y_n)$. The mapping $U\mapsto \Phi(U)$ defines a bounded linear functional $\Phi\in \mathcal L(E,Y_n)^*$. In fact, since $\Delta_n\in\mathcal N_1(c_0,\ell^1)$ we have $\Delta_nj_nUQ\in\mathcal N_1(\ell^1)$ for all $U\in\mathcal L(E,Y_n)$ by the operator ideal property. Moreover, since $\ell^1$ has the (metric) AP, the trace on the right hand side is well-defined and
\[|\Phi(U)|=|\text{tr}(\Delta_nj_nUQ)|\le ||\Delta_nj_nUQ||_{\mathcal N_1}\le ||\Delta_n||_{\mathcal N_1}||U||\]
for all $U\in\mathcal L(E,Y_n)$ (see e.g. \cite[10.3.2]{Pietsch80}), where the latter inequality holds by (BOI2). 

Observe that
\begin{equation}\label{notzero}
\Phi(\widetilde\Delta_{n-1}\cdots\widetilde\Delta_0S)=\text{tr}(\Delta_nj_n\widetilde\Delta_{n-1}\cdots\widetilde\Delta_0SQ)=\text{tr}(\Delta V)=\text{tr}(A)\neq 0
\end{equation}
by (ii). However, we will next show that $\Phi(U)=0$ for all $U\in\A(E,Y_n)$, which together with \eqref{notzero} yields the claim \eqref{productnotapprox}. 

Towards this, note that in view of linearity and continuity, it is sufficient to verify that $\Phi(x^*\otimes y)=0$ for all bounded rank-one operators $x^*\otimes y\in\F(E,Y_n)$, where $x^*\in E^*$ and $y\in Y_n$. Moreover, by construction we have
\[Y_n=\overline{\Delta_{n-1}\cdots\Delta_0KV\ell^1}=\overline{\widetilde\Delta_{n-1}\cdots\widetilde\Delta_0SQ\ell^1},\] 
and thus we may (by continuity) assume that $y\in \widetilde\Delta_{n-1}\cdots\widetilde\Delta_0SQ\ell^1$. This means that $y=\tilde\Delta_{n-1}\cdots \widetilde\Delta_0 SQx$ for some $x\in\ell^1$. Then
\begin{align}
\label{eq:ide}
\Phi(x^*\otimes y)=\text{tr}&(\Delta_nj_n(x^*\otimes y)Q)=\text{tr}(Q^*x^*\otimes \Delta_nj_ny)=x^*(Q\Delta_n j_n y)\\
\notag &=x^*(Q\Delta_nj_n\widetilde\Delta_{n-1}\cdots\widetilde\Delta_0SQx)=x^*(Q\Delta Vx)=0,
\end{align}
where the last equality holds since $Q\Delta V=0$. In fact, $\Delta \widetilde V Q\Delta V=(\Delta V)^2=0$ by (i) and the operator $\Delta\widetilde V$ is injective. Now, \eqref{eq:ide} together with \eqref{notzero} yields the claim \eqref{productnotapprox}.
\smallskip

We proceed towards the desired closed subspace $X\subset c_0$ and the desired operators $T\in\QN_p(X)$ and $U\in\SK_p(X)$. Note first for any fixed $k\in\{0,\ldots,n-1\}$ that $(\widetilde\Delta_k)^*\in\SK_p(Y_{k+1}^*,Y_k^*)$ by the duality \eqref{911}. Thus, according to \cite[Proposition 2.9]{GLT12}, there is a Banach space $Z_{k+1}$ together with operators $A_{k+1}\in\K(Y_{k+1}^*,Z_{k+1})$ and $B_{k+1}\in\SK_p(Z_{k+1},Y_k^*)$ such that $(\widetilde\Delta_k)^*=B_{k+1}A_{k+1}$. Here we can assume that the intermediate space $Z_{k+1}$ is a closed subspace of $c_0$ by applying Terzio\v{g}lu's factorisation result \cite{Terzioglu71} on the compact operator $A_{k+1}$, and the operator ideal property. Moreover, since $S\in\K(E,Y_0)$, also the compact adjoint operator $S^*\in\K(Y_0^*,E^*)$ admits a compact factorisation $S^*=B_0A_0$ through a closed subspace $Z_0\subset c_0$ by \cite{Terzioglu71}. 

For all $k\in\{0,\ldots,n-1\}$ define the operator $R_k:=A_kB_{k+1}$. Then the operators $R_k\in\SK_p(Z_{k+1},Z_k)$ by the operator ideal property, and the following diagram commutes:
\begin{center}
\begin{tikzcd}
Y_n^* \arrow{rr}{(\widetilde\Delta_{n-1})^*} \arrow[dr,"A_{n}"] &&Y_{n-1}^*\arrow{dr}{A_{n-1}} \arrow{rr}{(\widetilde\Delta_{n-2})^*} &&\cdots \arrow{r}{(\widetilde\Delta_1)^*}&Y_1^*\arrow{rr}{(\widetilde\Delta_0)^*}\arrow[dr,swap,"A_1"]&&Y_0^*\arrow{r}{S^*}\arrow[dr,"A_0"]&E^*\\
&Z_n\arrow[ur,swap,"B_{n}"]\arrow[rr,swap,"R_{n-1}"] &&Z_{n-1}\arrow[rr,swap,"R_{n-2}"]&&\cdots \arrow[r,swap,"R_1"]&Z_1\arrow[ur,swap,"B_1"]\arrow[rr,swap,"R_0"]&&Z_0\arrow[swap]{u}{B_0}
\end{tikzcd}
\end{center}
Finally, consider $X:=(Y_0\oplus\cdots\oplus Y_n)\oplus (Z_0\oplus\cdots\oplus Z_n)\subset c_0$ and define the operator
\[T:X\to X,\quad T\big((y_0,\ldots,y_n)+(z_0,\ldots,z_n)\big)=(0,\widetilde\Delta_0 y_0,\ldots,\widetilde\Delta_{n-1}y_{n-1})+(0,\ldots,0).\]
This means that $T=\sum_{k=0}^{n-1}J_{Y_{k+1}}\widetilde\Delta_kP_{Y_k}$, where $P_{Y_k}:X\to Y_k$ is the canonical projection and $J_{Y_k}:Y_{k}\to X$ is the natural isometric embedding for all $k\in\{0,\ldots,n\}$. We also define the operator
\[U:X\to X,\quad U\big((y_0,\ldots,y_n)+(z_0,\ldots,z_n)\big)=(0,\ldots,0)+(R_0z_1,\ldots,R_{n-1}z_n,0),\]
which means that $U=\sum_{k=0}^{n-1}J_{Z_k}R_kP_{Z_{k+1}}$, where $P_{Z_k}:X\to Z_k$ is the canonical projection and $J_{Z_k}:Z_k\to X$ is the natural isometric embedding for all $k\in\{0,\ldots,n\}$. 
\smallskip

\textbf{Claims.} 
\begin{enumerate}
\item[(a)] $T\in\QN_p(X)$ and $T^n\notin\A(X)$.
\item[(b)] $U\in\SK_p(X)$ and $U^n\notin\A(X)$.
\end{enumerate}
For the claims in part (a), observe first that $T\in\QN_p(X)$ by the operator ideal property since $\widetilde\Delta_k$ is quasi $p$-nuclear for every $k\in\{0,\ldots,n-1\}$. Next, note that 
\[T^n\big((y_0,\ldots,y_n)+(z_0,\ldots,z_n)\big)=(0,\ldots,0,\widetilde\Delta_{n-1}\cdots\widetilde\Delta_0 y_0)+(0,\ldots, 0,0),\]
which means that $T^{n}=J_{Y_n}\widetilde\Delta_{n-1}\cdots\widetilde\Delta_0 P_{Y_0}$. Now, since \[P_{Y_n}T^nJ_{Y_0}S=P_{Y_n}J_{Y_n}\tilde\Delta_{n-1}\cdots\widetilde\Delta_0P_{Y_0}J_{Y_0} S=\tilde\Delta_{n-1}\cdots\widetilde\Delta_0 S\notin\A(E,Y_n)\] by \eqref{productnotapprox}, we have $T^{n}\notin\A(X)$ by the operator ideal property. 

For the claims in part (b), observe that $U\in\SK_p(X)$ since $R_k$ is a Sinha-Karn $p$-compact operator for every $k\in\{0,\ldots,n-1\}$. Finally, we verify that $U^n\notin\A(X)$. 

For this, note that
\[U^n\big((y_0,\ldots,y_n)+(z_0,\ldots,z_n)\big)=(0,0,\ldots,0)+(R_{0}\cdots R_{n-1}z_n,0,\ldots,0),\]
which means that $U^n=J_{Z_0}R_0\cdots R_{n-1}P_{Z_n}$. Thus
\[
B_0P_{Z_0}U^nJ_{Z_n}A_n= B_0R_0\cdots R_{n-1}A_n= S^*(\widetilde\Delta_0)^*\cdots (\widetilde\Delta_{n-1})^*=(\widetilde\Delta_{n-1}\cdots\widetilde\Delta_0 S)^*
.\]
Now, if $U^n\in\A(X)$, then $(\widetilde\Delta_{n-1}\cdots\widetilde\Delta_0 S)^*\in\A(Y_n^*,E^*)$ by the above identity and the operator ideal property. But this would mean that $\widetilde\Delta_{n-1}\cdots\widetilde\Delta_0 S\in\A(E,Y_n)$ (see  e.g. \cite[Theorem 11.7.4]{Pietsch80}) which contradicts  \eqref{productnotapprox}. Consequently, $U^n\notin\A(X)$.
\end{proof}
After these preparations we are now in position to establish the main result of this section.
\begin{thm}\label{mainsection4}
Suppose that $1\le p<\infty$ and $p\neq 2$. Then there is a closed subspace $X\subset c_0$ such that 
\[index(\QAQN)=index(\QASK)= \max\{2,\left\lceil \frac{p}{2}\right\rceil\}.
\]
\end{thm}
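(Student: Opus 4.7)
The plan is to combine the upper bound established in Proposition \ref{prop41} with a matching lower bound obtained from the examples produced in Examples \ref{Ex:33}.(ii) and \ref{Ex:34}.(ii) (for $1\le p<2$) and from the technical Proposition \ref{prop43} (for $2<p<\infty$). Let $m:=\max\{2,\left\lceil p/2\right\rceil\}$. Since Proposition \ref{prop41} already gives $index(\QAQN)\le m$ and $index(\QASK)\le m$ for every Banach space $X$, the entire task reduces to exhibiting a single closed subspace $X\subset c_0$ for which both indices are at least $m$.

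For $2<p<\infty$ we have $m=\lceil p/2\rceil$, and Proposition \ref{prop43} supplies a closed subspace $X\subset c_0$ together with $T\in\QN_p(X)$ and $U\in\SK_p(X)$ satisfying $T^{m-1}\notin\A(X)$ and $U^{m-1}\notin\A(X)$. The passage from the operator-norm statement to the ideal-norm statement is the key (easy) observation: since the ideal norm dominates the operator norm by (BOI3), any $\VI$-convergent sequence of finite-rank operators is norm-convergent, so
\[
\overline{\F(X)}^{\V_{\QN_p}}\subset\overline{\F(X)}^{\V}=\A(X)\quad\text{and}\quad\overline{\F(X)}^{\V_{\SK_p}}\subset\A(X).
\]
Consequently $T^{m-1}\notin \overline{\F(X)}^{\V_{\QN_p}}$ and $U^{m-1}\notin\overline{\F(X)}^{\V_{\SK_p}}$, which means
\[
\bigl(T+\overline{\F(X)}^{\V_{\QN_p}}\bigr)^{m-1}\neq 0\in\QAQN,\qquad\bigl(U+\overline{\F(X)}^{\V_{\SK_p}}\bigr)^{m-1}\neq 0\in\QASK,
\]
so both quotient algebras have index at least $m$. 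Combined with the upper bound this yields equality.

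For $1\le p<2$ we have $m=2$ and we only need that the quotient algebras are non-trivial. Example \ref{Ex:33}.(ii) produces a closed subspace $Z_1\subset c_0$ with $\dim\QA_{Z_1}^{\QN_p}=\infty$, and Example \ref{Ex:34}.(ii) produces $Z_2\subset c_0$ with $\dim\QA_{Z_2}^{\SK_p}=\infty$. Taking $X:=Z_1\oplus Z_2$ (realised inside $c_0$ in the obvious way by separating coordinates), the direct-sum argument used in Example \ref{infinitedimensional} shows that the natural embeddings $J_i:Z_i\hookrightarrow X$ and projections $P_i:X\to Z_i$ transfer non-trivial elements: if $S\in\QN_p(Z_1)\setminus\overline{\F(Z_1)}^{\V_{\QN_p}}$, then $J_1SP_1\in\QN_p(X)\setminus\overline{\F(X)}^{\V_{\QN_p}}$ by the operator ideal property, since the converse would give $S=P_1(J_1SP_1)J_1\in\overline{\F(Z_1)}^{\V_{\QN_p}}$. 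Analogously $\QA_X^{\SK_p}\neq\{0\}$, so $index(\QAQN)\ge 2$ and $index(\QASK)\ge 2$, matching the upper bound.

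The genuine obstacle, namely producing the subspace and operators for $p>2$ with $T^{m-1},U^{m-1}\notin\A(X)$, has been isolated into Proposition \ref{prop43}; once that is in hand the theorem is an essentially formal book-keeping argument combining the two cases. A minor stylistic care is to write out one short sentence justifying why a single $X\subset c_0$ suffices for both algebras simultaneously, which for $p>2$ is automatic from Proposition \ref{prop43} and for $p<2$ is achieved by the direct sum $Z_1\oplus Z_2$.
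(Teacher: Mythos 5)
Your proposal is correct and follows essentially the same route as the paper: upper bound from Proposition \ref{prop41}; for $2<p<\infty$ the lower bound is read off from Proposition \ref{prop43} together with (BOI3) (i.e.\ $\overline{\F(X)}^{\V_{\QN_p}}\subset\A(X)$ and likewise for $\SK_p$); and for $1\le p<2$ the lower bound $2$ comes from Examples \ref{Ex:33}.(ii) and \ref{Ex:34}.(ii) via the direct sum $X=Z_1\oplus Z_2\subset c_0$ and the operator ideal property.
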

\begin{proof}
We consider the cases $1\le p<2$ and $2<p<\infty$ separately. Denote $m:=\max\{2,\left\lceil \frac{p}{2}\right\rceil\}$ so that $m=2$ if $1\le p<2$ and $m=\lceil p/2\rceil$ if $2<p<\infty$.

Suppose first that $1\le p<2$. By Example \ref{Ex:33}.(ii) and Example \ref{Ex:34}.(ii) there are closed subspaces $Z_1,Z_2\subset c_0$ together with operators $T\in\QN_p(Z_1)\setminus\overline{\F(Z_1)}^{\V_{\QN_p}}$ and $U\in\SK_p(Z_2)\setminus\overline{\F(Z_2)}^{\V_{\SK_p}}$. Consider the direct sum $X:=Z_1\oplus Z_2\subset c_0$. It is then easy to check that $\QAQN\neq\{0\}$ and $\QASK\neq\{0\}$. In fact, $J_1TP_1\in\QN_p(X)\setminus\overline{\F(X)}^{\V_{\QN_p}}$ by the operator ideal property, where $J_1:Z_1\to X$ is the natural isometric embedding and $P_1:X\to Z_1$ is the natural projection. Similarly $J_2UP_2\in\SK_p(X)\setminus\overline{\F(X)}^{\V_{\SK_p}}$. Consequently, the index of both $\QAQN$ and $\QASK$ is at least 2. It then follows from Proposition \ref{prop41} that $index(\QAQN)=index(\QASK)=2$.
\smallskip

Next, suppose that $2<p<\infty$. By Proposition \ref{prop43} there is a closed subspace $X\subset c_0$ together with operators $T\in \QN_p(X)$ and $U\in\SK_p(X)$ such that 
$T^{m-1}\notin\A(X)$ and $U^{m-1}\notin\A(X)$. By (BOI3) we have $T^{m-1}\notin\overline{\F(X)}^{\V_{\QN_p}}$ and thus
\[\big(T+\overline{\F(X)}^{\V_{\QN_p}}\big)^{m-1}=T^{m-1}+\overline{\F(X)}^{\V_{\QN_p}}\neq 0\in\QAQN.\]
Similarly, $U^{m-1}\notin\overline{\F(X)}^{\V_{\SK_p}}$ and thus $\big(U+\overline{\F(X)}^{\V_{\SK_p}}\big)^{m-1}\neq 0\in\QASK$. Consequently, the index of both $\QAQN$ and $\QASK$ is at least $m$, and thus by Proposition \ref{prop41} we have $index(\QAQN)=index(\QASK)=m$.
\end{proof}

\section{Closed ideals}\label{section5}

In this section we find examples of Banach spaces $X$ for which the quotient algebras $\QAQN$, $\QASK$ as well as $\QA_X=\K(X)/\A(X)$ contain chains of non-trivial closed ideals of a natural type in the case of $2<p<\infty$. We recall here that by a closed ideal of a given Banach algebra $A=(A,\V_A)$ we always mean a $\V_A$-closed two-sided ideal of $A$. Our results draw on Proposition \ref{prop43} where the existence of operators $T\in\QN_p$ and $U\in\SK_p$ was established for which $T^{\left\lceil p/2\right\rceil-1}\notin\A$ and $U^{\left\lceil p/2\right\rceil-1}\notin\A$, as well as the multiplication rules \eqref{multiplication} and \eqref{multip} for the classes $\QN_r$ and $\SK_r$, respectively. In the main theorem (Theorem \ref{2083}) of this section we exhibit a closed subspace $X\subset c_0$ such that the quotient algebra $\QA_X$ carries two incomparable countably infinite chains of nilpotent closed ideals. Here the closed ideals of $\QA_X$ are induced from ideals of the form $\QN_s(X)^r$ and $\SK_s(X)^r$ for carefully chosen natural numbers $s,r\in\mathbb N$. Moreover, these ideals also induce an infinite chain of closed ideals of the quotient algebras $\QAQN$ and $\QASK$, respectively.

We start by reviewing a basic fact about ideals of operators which we will require. This follows from e.g. \cite[Theorem 2.5.8(i)]{Dales00} but we provide the proof for completeness. 
\begin{fact}\label{090909}
Let $X$ be a Banach space and let $\I=(\I,\VI)$ be an arbitrary Banach operator ideal. If $\J$ is a non-zero algebraic ideal of $\I(X)$, then $\F(X)\subset\J$. In particular, if $\J$ is a non-zero closed ideal of $\I(X)$, then $\overline{\F(X)}^{\VI}\subset\J$.
\end{fact}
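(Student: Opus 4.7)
The plan is to start from any non-zero $T\in\J$ and show that every rank-one operator $x^*\otimes y\in\F(X)$ can be written as a product of $T$ with two rank-one operators, which themselves lie in $\I(X)$ by axiom (BOI1). Since $\J$ is a two-sided algebraic ideal of $\I(X)$, such a product will sit in $\J$, and then linearity and the definition of $\F(X)$ as the linear span of rank-one operators yield $\F(X)\subset\J$. The closed version then follows trivially by taking the $\VI$-closure.

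Concretely, I would first pick $T\in\J\setminus\{0\}$ and choose $x_0\in X$ with $Tx_0\neq 0$, then apply Hahn--Banach to produce $z_0^*\in X^*$ with $z_0^*(Tx_0)=1$. Given an arbitrary $x^*\in X^*$ and $y\in X$, I would form the two rank-one operators $S:=x^*\otimes x_0$ and $R:=z_0^*\otimes y$, both of which lie in $\I(X)$ with $\VI$-norms controlled by (BOI1). A short direct computation shows
\[
RTS=(z_0^*\otimes y)\circ(x^*\otimes Tx_0)=z_0^*(Tx_0)\,x^*\otimes y=x^*\otimes y,
\]
using $T(x^*\otimes x_0)=x^*\otimes Tx_0$. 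Since $T\in\J$ and $R,S\in\I(X)$, the two-sided ideal property gives $x^*\otimes y=RTS\in\J$.

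Thus every rank-one operator lies in $\J$, and by linearity every element of $\F(X)$ does as well, proving the first statement. For the second statement, if in addition $\J$ is $\VI$-closed in $\I(X)$, then $\F(X)\subset\J$ immediately implies $\overline{\F(X)}^{\VI}\subset\J$. I do not expect any serious obstacle: the only mildly delicate point is making sure that the factorisation $x^*\otimes y=RTS$ is legitimate as a product inside the algebra $\I(X)$, which is covered by (BOI1) ensuring $R,S\in\I(X)$ together with the ideal property of $\J$.
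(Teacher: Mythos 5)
Your proof is correct and essentially identical to the paper's: both pick a non-zero $T\in\J$, choose a vector and a functional so that the functional takes the value $1$ at the image under $T$, and then write an arbitrary rank-one operator as $(\text{rank-one})\circ T\circ(\text{rank-one})$, invoking (BOI1) and the two-sided ideal property.
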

\begin{proof}
Suppose that $\mathcal J$ is a non-zero algebraic ideal of $\I(X)$ and let $S\in \F(X)$ be a bounded finite-rank operator. By linearity we may assume that $S=y^*\otimes y$ for some  $y^*\in X^*$ and $y\in X$.
Since $\J$ is non-zero by assumption, there is a non-zero operator $T\in\J$. Pick $x^*\in X^*$ and $x\in X$ such that $x^*(Tx)=1$. It is then straightforward to check that \[S=y^*\otimes y=(x^*\otimes y)T(y^*\otimes x).\] This shows that $S\in\J$ since $\I(X)$ contains the bounded rank-one operators by (BOI1) and $\J$ is an ideal of $\I(X)$ by assumption. Consequently, $\mathcal F(X)\subset\mathcal J$.
\end{proof}
Let $q:\mathcal I(X)\to \mathfrak A_X^\I$ denote the quotient map for an arbitrary Banach operator ideal $\I=(\I,\VI)$ and Banach space $X$. We proceed by observing that closed ideals of the quotient algebra $\QA_X^\I$ correspond to non-zero closed ideals of $\I(X)$. Here the case of $\I=\K$ was provided in \cite[Proposition 2.1]{TW22}.
\begin{prop}\label{quotient}
Let $X$ be a Banach space and let $\I=(\I,\VI)$ be an arbitrary Banach operator ideal.
\begin{enumerate}[(i)]
\item Suppose that $\J$ is a non-zero closed ideal of $\I(X)$. Then $q(\J)$ is a closed ideal of $\QA_X^\I$. 
\item Suppose that $\widehat \J$ is a closed ideal of $\QA_X^\I$. Then $q^{-1}(\widehat\J)$ is a non-zero closed ideal of $\I(X)$.
\item Suppose that $\J_1$ and $\J_2$ are two non-zero closed ideals of $\I(Z)$. Then 
\[\J_1\subset\J_2\text{ if and only if }q(\J_1)\subset q(\J_2).\]
\end{enumerate}
In particular, the rule $\J\mapsto q(\J)$ defines a bijective correspondence between  the set of non-zero closed ideals of $\I(X)$  and the set of closed ideals of $\QA_X^\I$, which preserves inclusions in both directions.
\end{prop}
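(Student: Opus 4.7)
The plan is to treat the three parts in order and then assemble the bijective correspondence from them, using Fact~\ref{090909} as the key input: every non-zero closed ideal of $\I(X)$ contains $\FI=\ker q$, so the standard correspondence theorem for quotients of rings applies directly in the closed-ideal setting.

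For part (i), I would first verify the algebraic ideal property of $q(\J)\subset\QA_X^\I$. Given $T\in\J$ and $S\in\I(X)$, one has $TS,ST\in\J$ since $\J$ is a two-sided ideal of $\I(X)$, hence $q(T)q(S)=q(TS)\in q(\J)$ and similarly $q(S)q(T)\in q(\J)$. For closedness, I would invoke Fact~\ref{090909} to get $\FI\subset\J$, so $q(\J)=\J/\FI$, and this is closed in $\QA_X^\I=\I(X)/\FI$ precisely because $\J$ is $\VI$-closed in $\I(X)$ (a standard fact about quotients of Banach spaces by closed subspaces).

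For part (ii), continuity of $q$ makes $q^{-1}(\widehat\J)$ a $\VI$-closed subset of $\I(X)$, and the usual pullback argument shows it is a two-sided ideal. To see it is non-zero, note that $q^{-1}(\widehat\J)\supset q^{-1}(\{0\})=\FI$, and $\FI$ is non-zero since $\F(X)$ contains all bounded rank-one operators. For part (iii), the forward implication is immediate from monotonicity of $q$. For the converse, suppose $q(\J_1)\subset q(\J_2)$; since $\J_i\supset\FI$ by Fact~\ref{090909}, one has $q^{-1}(q(\J_i))=\J_i$ for $i=1,2$, so the inclusion lifts back to $\J_1\subset\J_2$.

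Finally, for the bijective correspondence, I would argue that the map $\J\mapsto q(\J)$ from non-zero closed ideals of $\I(X)$ to closed ideals of $\QA_X^\I$ is surjective because $q$ is surjective, so $q(q^{-1}(\widehat\J))=\widehat\J$ for every closed ideal $\widehat\J\subset\QA_X^\I$; and it is injective by the identity $q^{-1}(q(\J))=\J$ valid whenever $\J\supset\FI$, again guaranteed by Fact~\ref{090909}. The preservation of inclusions in both directions is exactly the content of part (iii). There is no real obstacle here beyond careful bookkeeping; the only subtlety is the non-zero hypothesis, which is used solely to trigger Fact~\ref{090909} and place us in the range where the abstract correspondence theorem operates.
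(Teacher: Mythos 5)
Your proposal is correct and follows essentially the same route as the paper: all three parts hinge on Fact~\ref{090909} (which places $\FI=\ker q$ inside every non-zero closed ideal) together with the standard correspondence for quotients. The only cosmetic difference is in part (i), where you deduce closedness of $q(\J)=\J/\FI$ from the general Banach-space fact that a closed subspace containing the kernel pushes forward to a closed subspace of the quotient, whereas the paper runs a short explicit $\varepsilon$-argument; both are valid and equivalent in content.
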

\begin{proof}
(i) Since $q$ is a surjective algebra homomorphism, the image $q(\J)$ is an ideal of $\QA_X^\I$. For the claim that $q(\J)$ is closed in $\QA_X^\I$, suppose that $T\in\I(X)$ is an operator such that $q(T)\in\overline{q(\J)}$ and let $\varepsilon>0$. Then there is an operator $S\in\J$ such that $||q(T-S)||<\varepsilon$. By definition there is an operator $R\in\overline{\F(X)}^{\VI}$ such that $||T-S-R||_{\I}<\varepsilon$. By Fact \ref{090909} we have $R\in\J$ since $\J=\overline{\J}^{\VI}$, and thus $S-R\in\J$. Consequently, $T\in\overline{\J}^{\VI}=\J$, which means that $q(T)\in q(\J)$.
\smallskip

(ii) Clearly $q^{-1}(\widehat \J)$ is a closed ideal of $\I(X)$ since $q$ is a bounded algebra homomorphism. Moreover, $q^{-1}(\widehat\J)\neq\{0\}$ since $\ker q\neq\{0\}$ (assuming tacitly that $X\neq \{0\}$).
\smallskip

(iii) The forward implication is clearly true. For the converse implication, suppose that $q(\J_1)\subset q(\J_2)$. It follows that $q^{-1}q(\J_1)\subset q^{-1}q(\J_2)$. Thus $\J_1\subset\J_2$, since \begin{equation}\label{idealsid}
\J=q^{-1}q(\J)\end{equation} 
for any non-zero closed ideal $\J$ of $\I(X)$. In fact, suppose that $\J$ is a non-zero closed ideal of $\I(X)$. In order to verify the non-trivial inclusion $q^{-1}q(\J)\subset\J$ of \eqref{idealsid}, let $T\in q^{-1}q(\J)$ so that $q(T)\in q(\J)$. Thus $q(T-S)=0$ for some $S\in\J$. Now, $\ker q =\overline{\F(X)}^{\VI}\subset\J$ by Fact \ref{090909} and consequently $T-S\in\J$, which then yields $T\in\J$. 
\end{proof}

According to \cite[Theorem 3.12]{TW22}, there is for each $4<p<\infty$ a closed subspace $X\subset\ell^p$ such that the uniform closure $\overline{\QN_p(X)}$ lies strictly between $\A(X)$ and $\K(X)$. By duality, $\overline{\SK_p(X^*)}$ lies strictly between $\A(X^*)$ and $\K(X^*)$. Our first theorem on closed ideals expands on this in the case of $p>6$ by exhibiting a closed subspace $X\subset \ell^p$ that carries a finite chain of closed ideals of $\mathcal L(X)$ that lie strictly between $\A(X)$ and $\overline{\QN_p(X)}$, where the length of the chain depends on the parameter $p$. By duality, we obtain a finite chain of closed ideals strictly between $\A(X^*)$ and $\overline{\SK_p(X^*)}$.

We will require the following lemma which follows from Proposition \ref{prop41}. 
\begin{lem}\label{16120}
Suppose that $2<p<\infty$ and let $\I=\QN_p$ or $\I=\SK_p$. Then 

\[index(\overline{\I(X)}/\A(X))\le \left\lceil p/2 \right\rceil.\]
In particular, the quotient algebra $\overline{\I(X)}/\A(X)$ is nilpotent.
\end{lem}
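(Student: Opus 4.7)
The plan is to reduce the statement to the multiplication argument already used in the proof of Proposition~\ref{prop41}, via a density argument. Fix $\I=\QN_p$ or $\I=\SK_p$ and set $m:=\lceil p/2\rceil\geq 2$. First I would briefly verify that $\overline{\I(X)}/\A(X)$ is a well-defined Banach algebra: $\A(X)=\overline{\F(X)}$ (uniform closure) is contained in $\overline{\I(X)}$ since $\F(X)\subset\I(X)$, and $\A(X)$ is a two-sided ideal of $\mathcal L(X)$, hence of $\overline{\I(X)}$; moreover $\A(X)$ is uniformly closed. So the quotient makes sense as a Banach algebra.

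Next, take arbitrary $T_1,\ldots,T_m\in\overline{\I(X)}$ (uniform closure). For each $i\in\{1,\ldots,m\}$ pick a sequence $(S_i^{(k)})_k\subset\I(X)$ with $\|S_i^{(k)}-T_i\|\to 0$ as $k\to\infty$. By the iterated multiplication rule~\eqref{powermultiplication} (respectively~\eqref{powermultiSKp}), for each $k$ one has
\[
S_m^{(k)}\cdots S_1^{(k)}\in\I_{p/m}(X),
\]
where I write $\I_q$ for $\QN_q$ or $\SK_q$ according to the case. Since $m\geq p/2$, we have $p/m\le 2$, and monotonicity~\eqref{monot} (respectively~\eqref{monotonicity}) together with Remarks~\ref{9113}(ii) yield
\[
\I_{p/m}(X)\subset\I_2(X)=\overline{\F(X)}^{\V_{\I_2}}\subset\overline{\F(X)}^{\V}=\A(X),
\]
where the last inclusion uses (BOI3). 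Thus $S_m^{(k)}\cdots S_1^{(k)}\in\A(X)$ for every $k$.

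Finally I would invoke joint continuity of multiplication in the uniform norm on uniformly bounded sets: since the sequences $(S_i^{(k)})_k$ are bounded (because they converge), the products $S_m^{(k)}\cdots S_1^{(k)}$ converge in operator norm to $T_m\cdots T_1$ as $k\to\infty$. Closedness of $\A(X)$ in operator norm then gives $T_m\cdots T_1\in\A(X)$, hence
\[
\bigl(T_m+\A(X)\bigr)\cdots\bigl(T_1+\A(X)\bigr)=0\in\overline{\I(X)}/\A(X).
\]
As $T_1,\ldots,T_m$ were arbitrary, this proves $index(\overline{\I(X)}/\A(X))\le m=\lceil p/2\rceil$, and in particular the quotient algebra is nilpotent. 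There is no real obstacle here; the only point to be slightly careful about is the density step, because the multiplication rules~\eqref{powermultiplication} and~\eqref{powermultiSKp} apply to genuine $\QN_p$/$\SK_p$ operators rather than to limits in the weaker uniform norm, so one must pass to the limit outside of the ideal norm and rely on $\A(X)$ being uniformly closed.
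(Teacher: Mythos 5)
Your proof is correct and follows essentially the same approach as the paper: approximate each factor by $\I$-operators in the uniform norm, use the iterated multiplication rule to place the product in $\overline{\F(X)}^{\VI}\subset\A(X)$, and pass to the limit using that $\A(X)$ is uniformly closed. If anything your version is slightly more careful than the paper's: the paper's proof only verifies $T^m\in\A(X)$ for a single $T\in\overline{\I(X)}$, whereas the definition of the index requires $T_m\cdots T_1\in\A(X)$ for arbitrary $T_1,\ldots,T_m\in\overline{\I(X)}$, which is exactly what you establish.
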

\begin{proof}
Suppose that $T\in\overline{\I(X)}$ and denote $m:=\left\lceil p/2\right\rceil$. Let $(S_n)\subset\I(X)$ be a sequence such that $||T-S_n||\to 0$ as $n\to\infty$. Then $||T^m-S_n^m||\to 0$ as $n\to\infty$. By Proposition \ref{prop41} we have $S_n^m\in\overline{\F(X)}^{\VI}$ for every $n\in\mathbb N$, and thus $T^m\in\A(X)$ in view of the fact that $\overline{\F(X)}^{\VI}\subset\A(X)$. This yields the claim.
\end{proof}
For any Banach operator ideal $\I=(\I,\VI)$ and any Banach space $X$ we denote \[\I(X)^n:=\text{span}\{T_n\cdots T_2T_1\mid T_1,\ldots,T_n\in \I(X)\},\quad n\in\mathbb N.\]
The operator ideal property implies that the family $\{\I(X)^n\mid n\in\mathbb N\}$ forms a decreasing chain of algebraic ideals of $\mathcal L(X)$. We are now in position to prove our first result on closed ideals of operators.

\begin{thm}\label{0301}
Suppose that $6<p<\infty$. Denote $m:=\left\lceil p/2\right\rceil$ so that $2(m-1)<p\le 2m$. Then there is a closed subspace $Z\subset\ell^p$ for which
\begin{equation}\label{0901}
\A(Z)\subsetneq \overline{\QN_p(Z)^{m-2}}\subsetneq\cdots\subsetneq\overline{\QN_p(Z)^2}\subsetneq \overline{\QN_p(Z)}\subsetneq\K(Z),
\end{equation}
\begin{equation}\label{0901_a}
\A(Z^*)\subsetneq \overline{\SK_p(Z^*)^{m-2}}\subsetneq\cdots\subsetneq\overline{\SK_p(Z^*)^2}\subsetneq \overline{\SK_p(Z^*)}\subsetneq\K(Z^*).
\end{equation}
Moreover, the following hold:
\begin{equation}\label{09012}
\overline{\F(Z)}^{||\cdot||_{\QN_p}}\subsetneq \overline{\QN_p(Z)^{m-2}}^{||\cdot||_{\QN_p}}\subsetneq\cdots\subsetneq\overline{\QN_p(Z)^2}^{||\cdot||_{\QN_p}}\subsetneq\QN_p(Z),
\end{equation}
\begin{equation}\label{09012_a}
\overline{\F(Z^*)}^{||\cdot||_{\SK_p}}\subsetneq \overline{\SK_p(Z^*)^{m-2}}^{||\cdot||_{\SK_p}}\subsetneq\cdots\subsetneq\overline{\SK_p(Z^*)^2}^{||\cdot||_{\SK_p}}\subsetneq\SK_p(Z^*).
\end{equation}
Thus, by Proposition \ref{quotient} the compact-by-approximable algebras $\QA_Z=\K(Z)/\A(Z)$ and $\QA_{Z^*}=\K(Z^*)/\A(Z^*)$ both contain a chain of $m-2$ non-trivial closed ideals which are nilpotent by Lemma \ref{16120}. Furthermore, the quotient algebras $\QA_Z^{\QN_p}$ and $\QA_{Z^*}^{\SK_p}$ both contain a chain of $m-3$ non-trivial closed ideals.
\end{thm}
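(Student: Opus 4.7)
The plan is to construct $Z$ as an $\ell^p$-direct sum of closed subspaces of $\ell^p$, each responsible for a single strict inclusion in the chain. Because $\ell^p \oplus_p \cdots \oplus_p \ell^p$ embeds isometrically into $\ell^p$, such a $Z$ is automatically a closed subspace of $\ell^p$. The extremal link $\overline{\QN_p(Z)} \subsetneq \K(Z)$ (and dually $\overline{\SK_p(Z^*)} \subsetneq \K(Z^*)$) will be inherited from a summand $X_0 \subset \ell^p$ furnished by \cite[Theorem 3.12]{TW22}. For each $n \in \{1, \ldots, m-2\}$ I would choose a parameter $p_n \in (2n, 2(n+1)] \cap (2, p]$---a non-empty range since $p > 2(m-1) \ge 2(n+1)$ for $n \le m-2$---and apply Proposition~\ref{prop43} at parameter $p_n$ to obtain a Banach space $X_n$ together with operators $T_n \in \QN_{p_n}(X_n) \subset \QN_p(X_n)$ and $U_n \in \SK_{p_n}(X_n) \subset \SK_p(X_n)$ with $T_n^n, U_n^n \notin \A(X_n)$; the shift structure in that proof moreover forces $T_n^{n+1} = U_n^{n+1} = 0$.

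To place each $X_n$ inside $\ell^p$ rather than $c_0$, I would revisit Proposition~\ref{prop43}'s construction and retarget the diagonal operators $D, \Delta_0, \ldots, \Delta_{n-1}$ from $c_0$ to $\ell^p$: since the parameter $a$ with $ap > 2/3$ already ensures $(\lambda_k^a) \in \ell^p$, the retargeted operators remain $p$-nuclear, and the Reinov--Davie trace functional $\Phi$ is unaffected because $\Delta_n$ still maps into $\ell^1$. Set $Z := X_0 \oplus_p X_1 \oplus_p \cdots \oplus_p X_{m-2}$, let $J_n, P_n$ denote the canonical embedding and projection for the $n$-th summand, and put $\tilde T_n := J_n T_n P_n \in \QN_p(Z)$ and $\tilde U_n := J_n U_n P_n \in \SK_p(Z)$. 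The bottom link $\A(Z) \subsetneq \overline{\QN_p(Z)^{m-2}}$ of \eqref{0901} is witnessed by $\tilde T_{m-2}^{m-2} = J_{m-2} T_{m-2}^{m-2} P_{m-2}$, whose non-approximability is preserved under the compression $P_{m-2}(\cdot) J_{m-2}$. The $Z^*$ chain \eqref{0901_a} follows by taking adjoints and invoking the dualities \eqref{911}--\eqref{9112} applied to the $\tilde U_n$'s, while the $\V_{\QN_p}$- and $\V_{\SK_p}$-norm chains \eqref{09012}--\eqref{09012_a} are automatic from the uniform-norm chains since $\V \le \V_{\QN_p}$ implies that $\V_{\QN_p}$-closures are contained in uniform closures.

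The hard step is verifying the middle links $\overline{\QN_p(Z)^{n+1}} \subsetneq \overline{\QN_p(Z)^{n}}$ for $1 \le n \le m-3$, where one must show $\tilde T_n^n \notin \overline{\QN_p(Z)^{n+1}}$. If $\tilde T_n^n = \lim_r S^{(r)}$ uniformly with $S^{(r)} \in \QN_p(Z)^{n+1}$, then compressing by $P_n(\cdot) J_n$ and expanding each factor along the partition of unity $I_Z = \sum_i J_i P_i$ would exhibit $T_n^n$ as a uniform limit of finite sums of $(n+1)$-fold products of $\QN_p$-operators between the summands. By the multiplication rule~\eqref{powermultiplication} each such product lies in $\QN_{p/(n+1)}(X_n)$, but because $p/(n+1) > 2$ here this class is not contained in $\A(X_n)$, so a purely nuclear-ideal argument is insufficient. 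The resolution is to revive the trace functional $\Phi$ from Proposition~\ref{prop43} in the present block-diagonal setting: one argues that $\Phi$ vanishes on every $(n+1)$-fold $\QN_p$-product appearing in the expansion (the choice $p_n \le 2(n+1)$ being precisely what activates the chain-length identity $Q\Delta V = 0$ underlying the Davie apparatus), while $\Phi(\tilde T_n^n) \ne 0$ by construction, yielding the contradiction. Making this generalised vanishing rigorous is the technical heart of the proof.
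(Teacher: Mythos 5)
Your block‑diagonal decomposition $Z = X_0 \oplus_p \cdots \oplus_p X_{m-2}$ cannot close the middle links, and you have correctly located the gap but not bridged it. The difficulty is that each summand $X_n$ is isolated: once $\tilde T_n^n$ is assumed to be a uniform limit of $(n+1)$‑fold products from $\QN_p(Z)^{n+1}$ and you compress back to $X_n$, you land in $\QN_{p/(n+1)}(X_n)$ with $p/(n+1) > 2$, which is genuinely larger than $\A(X_n)$. Your proposed remedy — extending the Reinov--Davie trace functional $\Phi$ so that it vanishes on $(n+1)$‑fold $\QN_p$‑products — is not established and does not follow from the argument in Proposition~\ref{prop43}: there, $\Phi$ is shown to vanish on rank‑one operators (and hence on $\A$) by means of the identity $Q\Delta V = 0$, but there is no mechanism making $\Phi$ vanish on the strictly larger class $\QN_{p/(n+1)}$. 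The parameter choice $p_n \le 2(n+1)$ does not help, because the ideal $\QN_p(Z)^{n+1}$ is built from $\QN_p$‑operators, not from $\QN_{p_n}$‑operators, and $\QN_p \supsetneq \QN_{p_n}$.

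The paper's proof closes the middle links with a mechanism your construction lacks: a \emph{single} operator $T\in\QN_p(X)$ with $T^{m-1}\notin\A(X)$ is factored as $T=VU$ via \cite[Lemma 5]{PP69} through a closed subspace $Z_0\subset\ell^p$, and the ``shifted'' operator $S := J_0 U V P_0\in\QN_p(Z)$ on $Z=Z_0\oplus Z_1$ satisfies the crucial identity $T^{m-1}=VP_0\,S^{m-2}\,J_0 U$. If $S^k\in\overline{\QN_p(Z)^{k+1}}$, replace $S^k$ inside $S^{m-2}=S^{m-k-2}S^k$ by an approximating $(k+1)$‑fold product $R_n$; the composite $VP_0 R_n S^{m-k-2} J_0 U$ then contains $m$ quasi‑$p$‑nuclear factors (the extra factor $U$ is decisive), so it lies in $\QN_{p/m}(X)\subset\A(X)$ since $p\le 2m$, forcing $T^{m-1}\in\A(X)$, a contradiction. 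This one‑operator, nested‑factorisation scheme yields \emph{all} the middle strict inclusions simultaneously and produces $Z\subset\ell^p$ directly, without any retargeting of the $c_0$‑construction in Proposition~\ref{prop43}. The top link $\overline{\QN_p(Z)}\subsetneq\K(Z)$ is then obtained from a non‑nilpotent compact operator on $Z_1\subset\ell^p$ via \cite[Theorem 2.9]{TW22} together with the nilpotency of $\overline{\QN_p(Z)}/\A(Z)$ (Lemma~\ref{16120}), rather than from \cite[Theorem 3.12]{TW22}. Your remaining remarks (duality for the $Z^*$ chain by reflexivity, and the $\V_{\QN_p}$‑ and $\V_{\SK_p}$‑chains following from the uniform‑norm chains via (BOI3)) are fine, but the theorem does not follow without the factorisation identity above or an equivalent substitute.
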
 
\begin{proof}
By Proposition \ref{prop43} there is a closed subspace $X\subset c_0$ together with an operator $T\in\QN_p(X)$ such that \begin{equation}\label{167_3}
T^{m-1}\notin\A(X).\end{equation} 
According to the proof of \cite[Lemma 5]{PP69}, the operator $T\in\QN_p(X)$ admits a factorisation 
\begin{equation}\label{241bv}
T=VU
\end{equation} 
where $U\in\QN_p(X,Z_0)$, $V\in\K(Z_0,X)$ and $Z_0$ is a closed subspace of $\ell^p$. Moreover, by \cite[Theorem 2.9]{TW22} there is a closed subspace $Z_1\subset\ell^p$ together with an operator $\widetilde R\in\K(Z_1)$ such that 
\begin{equation}\label{157}
\widetilde R^{n}\notin\A(Z_1)
\end{equation} 
for all $n\in\mathbb N$. Consider the direct sum $Z:=Z_0\oplus Z_1\subset\ell^p$. 

We will verify in Steps 1--3 below that the closed subspace $Z\subset\ell^p$ satisfies the claims \eqref{0901}--\eqref{09012_a}. For Steps 1 and 2, let $P_0:Z\to Z_0$ and $P_1:Z\to Z_1$ denote the natural projections and let $J_0:Z_0\to Z$ and $J_1:Z_1\to Z$ be the corresponding isometric embeddings.
\medskip

\textbf{Step 1.} \emph{Claim.} $\overline{\QN_p(Z)}\subsetneq\K(Z)$.
\smallskip

Let $R:=J_1\widetilde RP_1\in\K(Z)$. Since $\widetilde R^n=P_1R^nJ_1\notin\A(X)$ for all $n\in\mathbb N$, the operator ideal property implies that $R^n\notin\A(X)$ for all $n\in\mathbb N$. Consequently, $\QA_X=\K(X)/\A(X)$ is non-nilpotent. However, the quotient $\overline{\QN_p(Z)}/\A(Z)$ is nilpotent by Lemma \ref{16120} and thus $\overline{\QN_p(Z)}\subsetneq\K(Z)$.
\medskip

\textbf{Step 2.} \emph{Claim.} For every $k\in\{1,\ldots,m-3\}$ the following hold: 
\begin{enumerate}
\item[(a)] $\overline{\QN_p(Z)^{k+1}}\subsetneq \overline{\QN_p(Z)^k}$.  
\item[(b)] $\overline{\QN_p(Z)^{k+1}}^{\V_{\QN_p}}\subsetneq \overline{\QN_p(Z)^{k}}^{\V_{\QN_p}}$.
\end{enumerate}
Let $k\in\{1,\ldots,m-3\}$ be fixed. For part (a) we first note that $\QN_p(Z)^{k+1}\subset\QN_p(Z)^k$ by the operator ideal property, and thus $\overline{\QN_p(Z)^{k+1}}\subset \overline{\QN_p(Z)^k}$. In order to show the strict inclusion, recall that $U\in\QN_p(X,Z_0)$ in \eqref{241bv} above. Thus \begin{equation}\label{08014}
S:=J_0UVP_0\in\QN_p(Z)
\end{equation}
by the operator ideal property. We claim that
 \begin{equation}\label{0801}
 S^k\in \QN_p(Z)^k\setminus \overline{\QN_p(Z)^{k+1}},
 \end{equation} 
which then yields the claim in part (a).
 
For this, note first that $S^k\in\QN_p(Z)^k$. Next, we assume, towards a contradiction, that $S^k\in\overline{\QN_p(Z)^{k+1}}$. Thus there is a sequence $(R_n)\subset\QN_p(Z)^{k+1}$ such that $||R_n-S^k||\to 0$ as $n\to\infty$. Since 
\begin{equation}\label{080122}
T^{m-1}=(VU)^{m-1}=V(UV)^{m-2}U=VP_0(J_0UVP_0)^{m-2}J_0U=VP_0S^{m-2}J_0U\end{equation} we have that
\begin{align}\label{te}
||VP_0R_n S^{m-k-2}&J_0U-T^{m-1}||=||VP_0(R_n-S^k)S^{m-k-2}J_0U||\\
\notag&\leq ||V||\,||S^{m-k-2}||\,||U||\,||R_n-S^k||\to 0
\end{align}
as $n\to\infty$. By linearity and the multiplication rule \eqref{powermultiplication} we have $S^{m-k-2}\in\QN_{p/(m-k-2)}(Z)$ and $R_n\in\QN_{p/(k+1)}(Z)$ for all $n\in\mathbb N$. Moreover, since $U\in\QN_p(X,Z_0)$, the multiplication rule \eqref{multiplication} implies that 
\[VP_0R_nS^{m-k-2}J_0U\in\QN_q(X)\]
for all $n\in\mathbb N$, where
\[\frac{1}{q}=\frac{1}{p}+\frac{m-k-2}{p}+\frac{k+1}{p}=\frac{m}{p}\geq \frac{m}{2m}=\frac{1}{2}.\]
Hence $q\le 2$ which yields that $VP_0R_nS^{m-k-2}J_0U\in\A(X)$ for all $n\in\mathbb N$ by \eqref{QNple2}. But then \eqref{te} implies that $T^{m-1}\in\A(Z)$, which contradicts \eqref{167_3}. Thus the claim \eqref{0801} holds, which yields part (a). 

Part (b) follows immediately from part (a) since $\V\le \V_{\QN_p}$ and $\QN_p(Z)^{k+1}\subset\QN_p(Z)^k$ for all $k$. In fact, if $\overline{\QN_p(Z)^{k+1}}^{\V_{\QN_p}}=\overline{\QN_p(Z)^k}^{\V_{\QN_p}}$ for some $k\in\{1,\ldots,m-3\}$, then $\overline{\QN_p(Z)^{k+1}}=\overline{\QN_p(Z)^k}$, which contradicts the claim in part (a).
\medskip

\textbf{Step 3.} \emph{Claim.} 
\begin{enumerate}[(a)] 
\item $\A(Z)\subsetneq\overline{\QN_p(Z)^{m-2}}$. 
\item $\overline{\F(Z)}^{\V_{\QN_p}}\subsetneq \overline{\QN_p(Z)^{m-2}}^{\V_{\QN_p}}$.
\end{enumerate}

Let $S\in\QN_p(Z)$ be the operator defined in \eqref{08014}. We observe that \[S^{m-2}\in\QN_p(Z)^{m-2}\setminus\A(Z)\] which yields the claim in part (a). In fact, assume, towards a contradiction, that
 $S^{m-2}\in\A(Z)$. Since $T^{m-1}=VP_0S^{m-2}J_0U$ by \eqref{080122}, we have $T^{m-1}\in\A(Z)$ by the operator ideal property. But this contradicts \eqref{167_3}.
 
 Part (b) follows again directly from part (a) since $\V\le\V_{\QN_p}$.
 \smallskip
 
It follows from Steps 1--3 that the claims in \eqref{0901} and \eqref{09012} hold. Since $Z$ is reflexive it is then straightforward to verify that the claims in \eqref{0901_a} and \eqref{09012_a} hold by applying the duality \eqref{911} and Schauder's theorem. In fact, by \eqref{911} and the reflexivity of $Z$, the following hold for all $T\in\mathcal L(Z)$ and all $k\in\{1,\ldots,m-2\}$: $T\in\overline{\QN_p(Z)^k}$ if and only if $T^*\in\overline{\SK_p(Z^*)^k}$, and similarly, $T\in\overline{\QN_p(Z)^k}^{\V_{\QN_p}}$ if and only if $T^*\in\overline{\SK_p(Z^*)^k}^{\V_{\SK_p}}$. 
\end{proof} 
\begin{remarks}
Let $6<p<\infty$ and denote $m:=\lceil p/2\rceil$. 
\smallskip

(i) We do not know whether there exists a closed subspace $Z\subset\ell^p$ so that
\begin{equation*}\A(Z)\subsetneq \overline{\QN_p(Z)^{m-1}}\subsetneq\overline{\QN_p(Z)^{m-2}}\subsetneq \cdots\subsetneq\overline{\QN_p(Z)^2}\subsetneq \overline{\QN_p(Z)}\subsetneq\K(Z).\end{equation*}
Such an example would improve upon Theorem \ref{0301} and the chain $\big\{\overline{\QN_p(Z)^k}\big\}_{k=1}^{m-1}$ of non-trivial closed ideals of $\mathcal K(Z)$ would have maximal length in the sense that  $\A(Z)=\overline{\QN_p(Z)^{n}}$ for every $n\ge m$, which holds by Proposition \ref{prop41}.
\smallskip

(ii) By using a similar argument as in Theorem \ref{0301} one obtains a closed subspace $Z\subset c_0$ together with the following chains of closed ideals with maximal length (in the sense described above):
\begin{equation}\label{3o}
\A(Z)\subsetneq \overline{\QN_p(Z)^{m-1}}\subsetneq\overline{\QN_p(Z)^{m-2}}\subsetneq\cdots\subsetneq\overline{\QN_p(Z)^2}\subsetneq \overline{\QN_p(Z)}\subsetneq\K(Z),
\end{equation}
\begin{equation}\label{4k}
\A(Z)\subsetneq \overline{\SK_p(Z)^{m-1}}\subsetneq\overline{\SK_p(Z)^{m-2}}\subsetneq\cdots\subsetneq\overline{\SK_p(Z)^2}\subsetneq \overline{\SK_p(Z)}\subsetneq\K(Z).
\end{equation}
In fact, by Proposition \ref{prop43} there is a closed subspace $Z_0\subset c_0$ together with operators $T\in \QN_p(Z_0)$ and $U\in\SK_p(Z_0)$ for which $T^{m-1}\notin\A(Z_0)$ and $U^{m-1}\notin\A(Z_0)$. Moreover, according to \cite[Theorem 2.9]{TW22}, there is a closed subspace $Z_1\subset c_0$ together with an operator $S\in\K(Z_1)$ such that $S^n\notin\A(Z_1)$ for every $n\in\mathbb N$. One then verifies that the direct sum $Z=Z_0\oplus Z_1\subset c_0$ has the desired properties \eqref{3o} and \eqref{4k} by verifying the analogues of Steps 1--3 in Theorem \ref{0301} for the classes $\QN_p(Z)$ and $\SK_p(Z)$. We leave the details to the interested reader, since we exhibit in Theorem \ref{2083} a closed subspace $X\subset c_0$ for which $\K(X)$ carries two countably infinite chains of closed ideals, where the ideals are of a similar form as in the chains in \eqref{3o} and \eqref{4k}.
\end{remarks}

Let $2<p<\infty$. By \cite[Theorem 3.13]{TW22} there is a closed subspace $X\subset c_0$ such that the uniform closures $\overline{\QN_p(X)}$ and $\overline{\SK_p(X)}$ are incomparable closed ideals of $\mathcal L(X)$ such that
\begin{equation}\label{6}\A(X)\subsetneq \overline{\QN_p(X)}\subsetneq\K(X)\quad\text{ and }\quad\A(X)\subsetneq\overline{\SK_p(X)}\subsetneq\K(X).\end{equation}
This result is improved upon in Theorem \ref{2083} where we find a closed subspace $X\subset c_0$ that, in addition to \eqref{6}, carries two countably infinite chains of closed ideals of $\mathcal L(X)$, where the closed ideals of the first chain lie strictly between $\A(X)$ and $\overline{\QN_p(X)}$ and the closed ideals of the second chain lie strictly between $\A(X)$ and $\overline{\SK_p(X)}$. Here the closed ideals have the form  $\overline{\QN_{a_r}(X)^{b_r}}$ and $\overline{\SK_{a_r}(X)^{b_r}}$ for suitable increasing sequences $(a_r)\subset\mathbb N$ and $(b_r)\subset\mathbb N$ of natural numbers. Moreover, the closed ideals $\overline{\QN_{a_r}(X)^{b_r}}$ and $\overline{\SK_{a_s}(X)^{b_s}}$ are incomparable for every $r,s\in\mathbb N$.
It then follows from (BOI3) that the ideals $\overline{\QN_{a_r}(X)^{b_r}}^{\V_{\QN_p}}$ form a countably infinite chain of closed ideals of $\QN_p(X)$, and similarly, the ideals $\overline{\SK_{a_r}(X)^{b_r}}^{\V_{\SK_p}}$ form a countably infinite chain of closed ideals of $\SK_p(X)$. 
\smallskip

In order to prove the incomparability of the two chains of uniformly closed ideals described above we require the following refinement of Lemma \ref{minimal}.(i) for $\QN_p$ and Lemma \ref{minimal}.(ii) for $\SK_p$ in the case of $p>2$.

\begin{lem}\label{cor:minimal}
Let $2<p,q<\infty$ and let $X$, $Y$ and $Z$ be arbitrary Banach spaces. 
\begin{enumerate}
\item[(i)] Suppose that $S\in\QN_p(X,Y)$ and $T\in\overline{\SK_q(Y,Z)}$. Then $TS\in\overline{\F(X,Z)}^{\V_{\QN_p}}$.
\item[(ii)] Suppose that $S\in\overline{\QN_q(X,Y)}$ and $T\in\SK_p(Y,Z)$. Then $TS\in \overline{\F(X,Z)}^{\V_{\SK_p}}$. 
\end{enumerate}
\end{lem}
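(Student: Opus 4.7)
My plan is to prove both parts with a common template; I describe (i) in detail and indicate how (ii) mirrors it. The first step for (i) is a reduction: if $T_n\in\SK_q(Y,Z)$ with $\|T_n-T\|\to 0$, then by (BOI2) for $\QN_p$,
\[\|T_nS-TS\|_{\QN_p}\le\|T_n-T\|\cdot\|S\|_{\QN_p}\to 0,\]
so it suffices to handle the case $T\in\SK_q(Y,Z)$. Next I factor $S$ using the definition of quasi $p$-nuclearity: choose $(x_k^*)\in\ell^p_s(X^*)$ satisfying $\|Sx\|\le(\sum_k|x_k^*(x)|^p)^{1/p}$, set $\Phi x:=(x_k^*(x))_k$ and $M:=\overline{\Phi(X)}\subset\ell^p$. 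Then $\Phi\in\QN_p(X,M)$ and the defining inequality yields a bounded map $\tilde S\colon M\to Y$ with $\|\tilde S\|\le 1$ and $S=\tilde S\Phi$.

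The crucial observation is that $M$, as a closed subspace of $\ell^p$ with $p>2$, has type 2, so $M^*$ has cotype 2. Proposition \ref{QNpcotype2} applied to $M^*$ with exponent $q>2$ then gives that $M^*$ has the $\QN_q$-AP, and Fact \ref{16122} yields
\[\SK_q(M,Z)=\overline{\F(M,Z)}^{\V_{\SK_q}}.\]
Since $T\tilde S\in\SK_q(M,Z)$ by (BOI2), I pick $F_n\in\F(M,Z)$ with $\|T\tilde S-F_n\|_{\SK_q}\to 0$; by (BOI3) also $\|T\tilde S-F_n\|\to 0$. Then $F_n\Phi\in\F(X,Z)$ and another application of (BOI2) gives
\[\|TS-F_n\Phi\|_{\QN_p}=\|(T\tilde S-F_n)\Phi\|_{\QN_p}\le\|T\tilde S-F_n\|\cdot\|\Phi\|_{\QN_p}\to 0,\]
proving $TS\in\overline{\F(X,Z)}^{\V_{\QN_p}}$.

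Part (ii) is the symmetric mirror. I first reduce to $S\in\QN_q(X,Y)$ via $\|TS_n-TS\|_{\SK_p}\le\|T\|_{\SK_p}\cdot\|S_n-S\|\to 0$, then factor $S=\tilde S\Phi$ through $M:=\overline{\Phi(X)}\subset\ell^q$ with $\Phi\in\QN_q(X,M)$. Since $q>2$, again $M^*$ has cotype 2, so Proposition \ref{QNpcotype2} with exponent $p>2$ combined with Fact \ref{16122} yields $\SK_p(M,Z)=\overline{\F(M,Z)}^{\V_{\SK_p}}$. Choosing $F_n\in\F(M,Z)$ with $\|T\tilde S-F_n\|_{\SK_p}\to 0$, I now use the other slot of (BOI2),
\[\|TS-F_n\Phi\|_{\SK_p}=\|(T\tilde S-F_n)\Phi\|_{\SK_p}\le\|T\tilde S-F_n\|_{\SK_p}\cdot\|\Phi\|\to 0.\]
The main obstacle is recognising the right factorisation $S=\tilde S\Phi$ through a subspace of $\ell^p$ (resp.\ $\ell^q$) built from the defining sequence of $S$, since this is exactly what delivers the type-2 intermediate space needed to activate the chain cotype 2 of $M^*$ $\Rightarrow$ $\QN$-AP of $M^*$ (Proposition \ref{QNpcotype2}) $\Rightarrow$ finite-rank approximation of $T\tilde S$ in the Sinha--Karn norm (Fact \ref{16122}). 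The only subtlety afterwards is applying the correct slot of (BOI2) in each part, depending on which factor carries the relevant ideal norm.
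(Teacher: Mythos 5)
Your proof is correct for both parts. For part (i) it is essentially the paper's argument: you factor the quasi $p$-nuclear operator $S$ through a closed subspace $M\subset\ell^p$ (you do this by hand, which is precisely the content of Lemma 5 of Persson--Pietsch that the paper cites), use that $M$ has type 2, hence $M^*$ has cotype 2, hence $M^*$ has the $\QN_q$-AP by Proposition \ref{QNpcotype2}, and then invoke Fact \ref{16122}. The only cosmetic difference is that you first pass to $T\in\SK_q$ by a norm-continuity argument and then approximate, whereas the paper keeps $T$ in the uniform closure and instead invokes Lemma \ref{minimal}(i); both handle the closure in a routine way.

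For part (ii) your route is genuinely different from the paper's. The paper factors the Sinha--Karn $p$-compact operator $T$ through a quotient $W=\ell^{p'}/N$ of $\ell^{p'}$, observes that $W$ is reflexive with cotype 2, deduces that $W$ itself has the $\QN_q$-AP, and then composes via Lemma \ref{minimal}(ii). You instead factor the quasi $q$-nuclear factor $S$ through a subspace $M\subset\ell^q$ (legitimate since $q>2$ and the factorization only needs the defining sequence of $S$), so that again $M^*$ has cotype 2 and you can run the identical chain Proposition \ref{QNpcotype2} $\Rightarrow$ $\QN_p$-AP of $M^*$ $\Rightarrow$ Fact \ref{16122} $\Rightarrow$ $\SK_p(M,Z)=\overline{\F(M,Z)}^{\V_{\SK_p}}$, and finish with the appropriate slot of (BOI2). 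The trade-off: your version buys a uniform template for (i) and (ii) and avoids citing the quotient-space factorization of \cite[Theorem 3.2]{SK02}, at the mild cost of carrying the ideal norm on the $T\tilde S-F_n$ side of (BOI2) in part (ii) rather than on the $\Phi$ side; the paper's version is slightly more ``dual-symmetric'' in the sense that it factors the operator that carries the relevant ideal norm in each part.
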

\begin{proof}
(i) According to the proof of \cite[Lemma 5]{PP69}, the operator $S\in\QN_p(X,Y)$ admits a factorisation $S=VU$ where $U\in\QN_p(X,M)$, $V\in\K(M,Y)$ and $M$ is a closed subspace of $\ell^p$. Since $T\in\overline{\SK_q(Y,Z)}$ by assumption, the operator ideal property yields that $TV\in \overline{\SK_q(M,Z)}$. Moreover, the closed subspace $M\subset\ell^p$ has type 2 and thus the dual space $M^*$ has cotype 2, see \cite[Proposition 11.10]{DJT95}. By Proposition \ref{QNpcotype2} the dual $M^*$ has the $\QN_q$-AP, and thus according to Fact \ref{16122} we have \[\SK_q(M,Z)=\overline{\F(M,Z)}^{\V_{\SK_q}}\subset\A(M,Z),\]
where the inclusion follows from $\V\le \V_{\SK_p}$. Consequently, $TV\in\overline{\SK_q(M,Z)}=\A(M,Z)$ and thus $TS=(TV)U\in\overline{\F(X,Z)}^{\V_{\QN_p}}$ by Lemma \ref{minimal}.(i).
\smallskip

(ii) According to \cite[Theorem 3.2]{SK02} (see also \cite[p. 2446]{CK10}), the operator $T\in\SK_p(Y,Z)$ admits a factorisation $T=BA$, where $A\in \mathcal L(Y,W)$, $B\in\SK_p(W,Z)$ and $W=\ell^{p'}/N$ is a quotient space of $\ell^{p'}$. Since $S\in\overline{\QN_q(X,Y)}$ by assumption, we have $AS\in\overline{\QN_q(X,W)}$ by the operator ideal property. Moreover, $W^*\cong N^\perp\subset\ell^p$ has type 2, and thus the reflexive space $W$ has cotype 2, see \cite[Proposition 11.10]{DJT95}. Consequently, $W$ has the $\QN_q$-AP by Proposition \ref{QNpcotype2}. It follows that 
\[\QN_q(X,W)=\overline{\F(X,W)}^{\V_{\QN_q}}\subset\A(X,W),\] 
and thus $AS\in\overline{\QN_q(X,W)}=\A(X,W)$. Finally, Lemma \ref{minimal}.(ii) yields that $TS=B(AS)\in \overline{\F(X,Y)}^{\V_{\SK_p}}$. 
\end{proof}

We are now in position to prove the main result of this section.
\begin{thm}\label{2083}
Suppose that $2<p<\infty$. Then there is a closed subspace $X\subset c_0$ together with two decreasing chains $\{\I_r\mid r\in\mathbb N\}$ and $\{\mathcal J_r\mid r\in\mathbb N\}$ of algebraic ideals of $\mathcal L(X)$ such that the following holds for the uniform norm closures:
\begin{equation}\label{eq:29}
\begin{tikzcd}[tips=false,column sep=0.01em,row sep=0.01em]
&\cdots\arrow[draw=none]{r}[sloped,auto=false]{\subsetneq}&\overline{\mathcal I_{r+1}}\arrow[draw=none]{r}[sloped,auto=false]{\subsetneq}&\overline{\mathcal I_r}\arrow[draw=none]{r}[sloped,auto=false]{\subsetneq}&\cdots\arrow[draw=none]{r}[sloped,auto=false]{\subsetneq} & \overline{\mathcal I_1}\arrow[draw=none]{r}[sloped,auto=false]{\subsetneq} &\overline{\QN_p(X)}\arrow[draw=none]{dr}[sloped,auto=false]{\subsetneq}&
\\          
\A(X)\arrow[draw=none]{dr}[sloped,auto=false]{\subsetneq}\arrow[draw=none]{ur}[sloped,auto=false]{\subsetneq}&&& & & &  &\K(X).\\   
&\cdots\arrow[draw=none]{r}[sloped,auto=false]{\subsetneq}&\overline{\mathcal J_{r+1}}\arrow[draw=none]{r}[sloped,auto=false]{\subsetneq}&\overline{\mathcal J_r}\arrow[draw=none]{r}[sloped,auto=false]{\subsetneq}&\cdots\arrow[draw=none]{r}[sloped,auto=false]{\subsetneq}  &\overline{\mathcal J_1} \arrow[draw=none]{r}[sloped,auto=false]{\subsetneq}& \overline{\SK_p(X)}\arrow[draw=none]{ur}[sloped,auto=false]{\subsetneq}&
\end{tikzcd}
\end{equation}
The two chains of closed ideals in \eqref{eq:29}  are incomparable in the sense that each closed ideal in the upper chain is incomparable with each closed ideal in the lower chain. Moreover, the following hold:
\begin{equation}\label{09014}
\overline{\F(X)}^{\V_{\QN_p}}\subsetneq\cdots \subsetneq\overline{\I_{r+1}}^{\V_{\QN_p}}\subsetneq \overline{\I_r}^{\V_{\QN_p}}\subsetneq \cdots\subsetneq \overline{\I_1}^{\V_{\QN_p}}
\subsetneq\QN_p(X),
\end{equation}
\begin{equation}\label{09013}
\overline{\F(X)}^{\V_{\SK_p}}\subsetneq\cdots \subsetneq\overline{\mathcal J_{r+1}}^{\V_{\SK_p}}\subsetneq \overline{\mathcal J_r}^{\V_{\SK_p}}\subsetneq \cdots\subsetneq \overline{\mathcal J_1}^{\V_{\SK_p}}
\subsetneq\SK_p(X).\end{equation}
In particular, by Proposition \ref{quotient} the compact-by-approximable algebra $\QA_X=\K(X)/\A(X)$ contains two incomparable countably infinite chains of closed ideals which are all nilpotent by Lemma \ref{16120}. Moreover, the quotient algebras $\QAQN$ and $\QASK$ both contain a countably infinite chain of closed ideals. 
\end{thm}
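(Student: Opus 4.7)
The plan is to extend Theorem~\ref{0301} to infinite chains by invoking Proposition~\ref{prop43} for a sequence of parameters $q_r\to\infty$ instead of a single parameter. For each $r\in\mathbb N$ I would apply Proposition~\ref{prop43} with $q_r:=2r+2$ to obtain closed subspaces $M_r,N_r\subset c_0$ and operators $T_r\in\QN_{q_r}(M_r)$, $U_r\in\SK_{q_r}(N_r)$ with $T_r^{r}\notin\A(M_r)$ and $U_r^{r}\notin\A(N_r)$ (since $\lceil q_r/2\rceil-1=r$), retaining the auxiliary factorizations $T_r=V_rU_r$ (and their $\SK$-analogues) from the proof of Proposition~\ref{prop43} that underlie the Step~2 argument of Theorem~\ref{0301}. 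Using \cite[Theorem 2.9]{TW22} I would additionally pick $M_0\subset c_0$ with a compact $R\in\K(M_0)$ satisfying $R^n\notin\A(M_0)$ for every $n$; as in Step~1 of Theorem~\ref{0301}, $R$ contrasts the nilpotency provided by Lemma~\ref{16120} and witnesses $\overline{\QN_p(X)}\subsetneq\K(X)$ and $\overline{\SK_p(X)}\subsetneq\K(X)$. Set $X:=M_0\oplus\bigl(\bigoplus_{r\ge r_0}M_r\bigr)\oplus\bigl(\bigoplus_{r\ge r_0}N_r\bigr)\subset c_0$, with $r_0$ large enough that $q_r/r=2+2/r\le p$.

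Next, define $\I_r:=\QN_{q_r}(X)^{r}$ and $\J_r:=\SK_{q_r}(X)^{r}$ (re-indexed over~$\mathbb N$); these are algebraic ideals by the operator ideal property, and the multiplication rules \eqref{powermultiplication},~\eqref{powermultiSKp} give $\I_r\subset\QN_p(X)$ and $\J_r\subset\SK_p(X)$. To obtain a decreasing chain of algebraic ideals I would replace $\I_r$ by $\tilde\I_r:=\sum_{s\ge r}\QN_{q_s}(X)^{s}$ (and analogously $\tilde\J_r$), which is clearly decreasing and has the same uniform-closure structure as far as the distinguishing operators are concerned. The canonically embedded $T_r^{r}$ belongs to $\tilde\I_r\setminus\A(X)$, witnessing $\A(X)\subsetneq\overline{\tilde\I_r}$, and similarly for $U_r^r$. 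For $\overline{\tilde\I_{r+1}}\subsetneq\overline{\tilde\I_r}$ I would mimic Step~2 of Theorem~\ref{0301}: supposing $T_r^{r}=\lim R_n$ with $R_n\in\tilde\I_{r+1}$, sandwiching between the factorizing operators $V_r,U_r$ pushes the reconstructed composition into $\QN_q(X)$ with $q\le 2$, hence into $\A(X)$, contradicting the non-approximability of the reconstructed $T_r^{m_r-1}$ from Proposition~\ref{prop43}.

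For the mutual incomparability of the two chains, Lemma~\ref{cor:minimal} is the key. If $T_r^{r}\in\overline{\tilde\J_s}$, pick $S_n\in\tilde\J_s\subset\overline{\SK_{q_s/s}(X)}$ converging to $T_r^{r}$; composing on the appropriate side with $\QN_{q_r}$-operators from the factorization of $T_r$ yields, by Lemma~\ref{cor:minimal}(i), elements of $\overline{\F(X)}^{\V_{\QN_{q_r}}}\subset\A(X)$, and passing to the limit together with the Step~2 reconstruction gives $T_r^{m_r-1}\in\A(X)$, a contradiction. The reverse $U_s^{s}\notin\overline{\tilde\I_r}$ is symmetric via Lemma~\ref{cor:minimal}(ii). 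The strict inclusions in the $\V_{\QN_p}$- and $\V_{\SK_p}$-norms \eqref{09014}--\eqref{09013} follow from the uniform strictness together with (BOI3), the top strictness being witnessed by an operator in $\QN_p(X)$ not in $\overline{\QN_{q}(X)}$ for some $q=q_{r_0}/r_0<p$ (standard diagonal-operator constructions provide such operators). Proposition~\ref{quotient} then transports every closed-ideal chain to the quotient algebras $\QA_X$, $\QAQN$, $\QASK$, and Lemma~\ref{16120} supplies nilpotency in $\QA_X$. The main obstacle is the uniform execution of the Step~2 reconstruction argument at every level $r$ of the infinite chain: one must arrange the parameters and factorizations so that the multiplication rule crosses the critical index $q=2$ precisely when one extra $\QN_{q_r}$ (or $\SK_{q_s}$) factor is added, while simultaneously keeping every $\tilde\I_r$, $\tilde\J_r$ inside $\QN_p(X)$, $\SK_p(X)$ and ensuring that the reconstructions of non-approximable operators at different levels do not interfere through the direct-sum structure.
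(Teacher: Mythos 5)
The overall strategy — applying Proposition~\ref{prop43} with a sequence of growing parameters to produce a decreasing chain of ideals of the form $\QN_q(X)^e$, $\SK_q(X)^e$, and then distinguishing the two chains via Lemma~\ref{cor:minimal} — matches the paper's blueprint. But the concrete parameter choices in your proposal break the key calculations, and the proof does not go through as written.

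First, there is an off-by-one error in the exponent. You set $\I_r:=\QN_{q_r}(X)^r$ with $q_r=2r+2$, noting that $T_r^r\notin\A(X)$ by Proposition~\ref{prop43}. But then the argument ``if $T_r^r=\lim R_n$ with $R_n\in\tilde\I_{r+1}$, then multiplying by $T_r$ and using the multiplication rule forces an approximable operator, contradiction'' is vacuous: the target $T_r^{r+1}$ lies in $\QN_{q_r}(X)^{r+1}\subset\QN_{q_r/(r+1)}(X)=\QN_{2}(X)\subset\A(X)$ \emph{automatically}, so there is nothing to contradict. What you need is one fewer factor in the ideal, so that the non-approximable power $T_r^r$ is reached precisely when one extra $T_r$-factor is appended to the witness. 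The paper takes $\I_r=\QN_{\theta(r+1)+1}(X)^{\theta(r)-1}$: here $\lceil(\theta(r+1)+1)/2\rceil=\theta(r)+1$, the exponent is $\theta(r)-1=\lceil p_r/2\rceil-2$ (not $\lceil p_r/2\rceil-1$), the witness is $T_r^{\theta(r)-1}$, and $T_r^{\theta(r)}\notin\A(X)$ is exactly what Proposition~\ref{prop43} supplies.

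Second, even with the exponent corrected, the linear scaling $q_r=2r+2$ fails the strict-decrease computation. Take $\I_r=\QN_{q_r}(X)^{r-1}$ and suppose $S_n\in\I_{r+1}=\QN_{q_{r+1}}(X)^r$ converges uniformly to $T_r^{r-1}$. Then $S_n\in\QN_{q_{r+1}/r}(X)=\QN_{(2r+4)/r}(X)$ and $T_r\in\QN_{2r+2}(X)$, so $S_nT_r\in\QN_t(X)$ with
\[
\frac1t=\frac{r}{2r+4}+\frac{1}{2r+2}=\frac12-\frac{1}{r+2}+\frac{1}{2r+2}<\frac12
\]
for every $r\ge 1$, i.e.\ $t>2$. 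So $S_nT_r$ need not be approximable, and the argument yields no contradiction; the sum $\tilde\I_{r+1}=\sum_{s\ge r+1}\QN_{q_s}(X)^{s-1}$ only makes the worst term $s=r+1$ unavoidable. This is precisely where the paper's exponential scaling $\theta(r)=2^{r+\lambda}$ is indispensable: the identity $\theta(r+1)=2\theta(r)$ makes the computation in Step~4 close at $1/t>1/2$, and moreover $\I_{r+1}\subset\I_r$ then follows automatically by pairing factors (Step~2), eliminating the need for the ad hoc $\tilde\I_r$ construction altogether. (You yourself flag this as ``the main obstacle,'' but do not resolve it.)

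Finally, the invocation of ``factorizing operators $V_r,U_r$'' from Proposition~\ref{prop43} and the ``sandwiching'' device is out of place here. That sandwiching trick, used in Theorem~\ref{0301}, is needed because there $Z$ is a subspace of $\ell^p$ while Proposition~\ref{prop43} produces operators on subspaces of $c_0$, so one must re-factor through $\ell^p$. In Theorem~\ref{2083} the ambient space $X$ is already a subspace of $c_0$, the operators $T_r=J_r\widetilde T_rP_r$ live on $X$ directly, and the Step~4 argument of the paper's proof proceeds by right-multiplication by $T_r$ alone, with no auxiliary factorizations.
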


\begin{proof}
Let $\lambda:=\left\lceil \log_2\Big(\frac{p+1}{p-2}\Big)\right\rceil$. In order to simplify the notation, we denote $\theta(r)=2^{r+\lambda}$ for all $r\in\mathbb N_0=\mathbb N\cup\{0\}$. Clearly $\left\lceil \frac{\theta(r+1)+1}{2}\right\rceil=\theta(r)+1$ for all $r\in\mathbb N_0$. Thus by Proposition \ref{prop43} there is for all $r\in\mathbb N_0$ a closed subspace $M_r\subset c_0$ together with the following operators:
\begin{enumerate}[(i)]
\item $\widetilde T_r\in\QN_{\theta(r+1)+1}(M_r)$ such that $\widetilde T_r^{\theta(r)}\notin\A(M_r)$.
\item $\widetilde U_r\in\SK_{\theta(r+1)+1}(M_r)$ such that $\widetilde U_r^{\theta(r)}\notin\A(M_r)$.
\end{enumerate}
Moreover, by \cite[Theorem 2.9]{TW22} there is a closed subspace $M\subset c_0$ that carries a compact operator
\begin{enumerate}
\item[(iii)]  $\widetilde V\in\K(M)$ such that $\widetilde V^r\notin\A(M)$ for all $r\in\mathbb N$.
\end{enumerate}
Consider the closed subspace $X:=\big(M\oplus M_0\oplus M_1\oplus M_2\oplus\cdots\big)_{c_0}\subset c_0$. For each $r\in\mathbb N$ we define the following algebraic ideals of $\mathcal L(X)$:
\begin{align}\label{definitionofI_r}
\I_r&=\QN_{\theta(r+1)+1}(X)^{\theta(r)-1},\\
\label{definitionofJ_r}\mathcal J_r&=\SK_{\theta(r+1)+1}(X)^{\theta(r)-1}.\end{align}
It follows from Steps 1--7 below that the closed subspace $X\subset c_0$ together with the ideals $\I_r$ and $\mathcal J_r$ in \eqref{definitionofI_r} and \eqref{definitionofJ_r} has the desired properties. 
\smallskip

For Steps 1, 2 and 4 we define the auxiliary ideals $\mathcal I_0=\QN_{\theta(1)+1}(X)^{\theta(0)-1}$ and $\mathcal J_0=\SK_{\theta(1)+1}(X)^{\theta(0)-1}$ in a similar way as in \eqref{definitionofI_r} and \eqref{definitionofJ_r}. Note that $\theta(0)=2^\lambda\ge 2$ and thus $\I_0$ and $\J_0$ are well-defined ideals of $\mathcal L(X)$. 
\medskip

\textbf{Step 1.} \emph{Claim.} 
\begin{enumerate}
\item[(a)] $\I_0\subset\QN_p(X)$. 
\item[(b)] $\mathcal J_0\subset\SK_p(X)$.
\end{enumerate}

For part (a) suppose that $T\in \I_0$. By the multiplication rule \eqref{powermultiplication} for the quasi $p$-nuclear operators and linearity, we have $T\in\QN_q(X)$ for $q:=\frac{\theta(1)+1}{\theta(0)-1}$. Moreover, since $\lambda\ge \log_2(\frac{p+1}{p-2})$ one gets that
\[
q=\frac{\theta(1)+1}{\theta(0)-1}=\frac{2^{1+\lambda}+1}{2^{\lambda}-1}=2+\frac{3}{2^{\lambda}-1}\le 2+\frac{3}{\frac{p+1}{p-2}-1}=p.
\] 
Thus $T\in \QN_p(X)$ by the monotonicity  \eqref{monot} of the classes $\QN_r$.
\smallskip

Part (b) is verified in a similar way. In fact, suppose that $T\in\mathcal J_0$. Then $T\in\SK_q(X)$ by the multiplication rule for the Sinha-Karn $p$-compact operators \eqref{powermultiSKp}, where $q$ is the constant in part (a). Consequently, by the monotonicity \eqref{monotonicity} of the classes $\SK_r$, we have $T\in\SK_p(X)$. 
 
\medskip

\textbf{Step 2.} \emph{Claim.} For all $r\in\mathbb N_0=\mathbb N\cup\{0\}$ the following hold:
\begin{enumerate}
\item[(a)] $\I_{r+1}\subset\I_r$. 
\item[(b)] $\mathcal J_{r+1}\subset\mathcal J_r$.
\end{enumerate}
Fix $r\in\mathbb N_0$ for the argument. For part (a) suppose that $T\in\I_{r+1}$. By linearity we may assume that $T=S_1\cdots S_{\theta(r+1)-1}$, where $S_j\in\QN_{\theta(r+2)+1}(X)$ for all $j\in\{1,\ldots,\theta(r+1)-1\}$. Since $(\theta(r+2)+1)/2=\theta(r+1)+\frac{1}{2}$, it follows by the multiplication rule \eqref{powermultiplication} and the monotonicity \eqref{monot} that
\[S_jS_{j+1}\in\QN_{\theta(r+1)+\frac{1}{2}}(X)\subset\QN_{\theta(r+1)+1}(X)\]
for all $j\in\{1,\ldots,\theta(r+2)-2\}$. Thus
\[T=S_1\overbrace{(S_2S_3)\cdots(S_{\theta(r+1)-2}S_{\theta(r+1)-1})}^{\frac{\theta(r+1)-2}{2}=\theta(r)-1\text{ pairs}}\in\QN_{\theta(r+1)+1}(X)^{\theta(r)-1}=\I_r.\]

Part (b) is shown exactly in the same way by using the corresponding multiplication rule \eqref{powermultiSKp} and the monotonicity \eqref{monotonicity} for Sinha-Karn $p$-compact operators.
\medskip

Since $\QN_p\subset\K$, $\SK_p\subset\K$ and $\F(X)$ is the minimal ideal of $\mathcal L(X)$ (see Fact \ref{090909}), the following diagram holds by Steps 1 and 2:
\begin{equation}\label{eq:230}
\begin{tikzcd}[tips=false,column sep=0.01em,row sep=0.01em]
&\cdots\arrow[draw=none]{r}[sloped,auto=false]{\subset}&\mathcal I_{r+1}\arrow[draw=none]{r}[sloped,auto=false]{\subset}&\mathcal I_r\arrow[draw=none]{r}[sloped,auto=false]{\subset}&\cdots\arrow[draw=none]{r}[sloped,auto=false]{\subset} &\mathcal I_1\arrow[draw=none]{r}[sloped,auto=false]{\subset} &\QN_p(X)\arrow[draw=none]{dr}[sloped,auto=false]{\subset}&
\\          
\F(X)\arrow[draw=none]{dr}[sloped,auto=false]{\subset}\arrow[draw=none]{ur}[sloped,auto=false]{\subset}&&& & & &  &\K(X).\\   
&\cdots\arrow[draw=none]{r}[sloped,auto=false]{\subset}&\mathcal J_{r+1}\arrow[draw=none]{r}[sloped,auto=false]{\subset}&\mathcal J_r\arrow[draw=none]{r}[sloped,auto=false]{\subset}&\cdots\arrow[draw=none]{r}[sloped,auto=false]{\subset}  &\mathcal J_1 \arrow[draw=none]{r}[sloped,auto=false]{\subset}& \SK_p(X)\arrow[draw=none]{ur}[sloped,auto=false]{\subset}&
\end{tikzcd}
\end{equation}
Consequently, the inclusions in \eqref{eq:29},  \eqref{09014} and \eqref{09013} hold. We next show that all the inclusions in \eqref{eq:29}, \eqref{09014} and \eqref{09013} are strict.
\medskip

\textbf{Step 3.} \emph{Claim.} 

\begin{enumerate}
\item[(a)] $\overline{\QN_p(X)}\subsetneq \K(X)$. 
\item[(b)] $\overline{\SK_p(X)}\subsetneq \K(X)$.
\end{enumerate}
The claim follows from a similar argument as Step 1 of Theorem \ref{0301}. In fact, we first observe that the compact operator $\widetilde T\in\K(M)$ in (iii) together with the operator ideal property implies that $V:=J_M\widetilde TP_M\in\K(X)$ satisfies $V^r\notin\A(X)$ for every $r\in\mathbb N$. Thus $\QA_X=\K(X)/\A(X)$ is non-nilpotent. However, by Lemma \ref{16120} the quotients $\overline{\QN_p(X)}/\A(X)$ and $\overline{\SK_p(X)}/\A(X)$ are nilpotent and thus both claims in Step 3 hold.
\smallskip

For Steps 4 and 5 we define for every $r\in\mathbb N_0$ the operators $T_r:=J_r\widetilde T_r P_r$ and $U_r:=J_r\widetilde U_r P_r$, where $J_r:M_r\to X$ is the natural isometric embedding and $P_r:X\to M_r$ is the natural projection. By (i), (ii) and the operator ideal property, the following hold:
\begin{enumerate}
\item[(iv)] For all $r\in\mathbb N_0$ the operator $T_r\in\QN_{\theta(r+1)+1}(X)$ satisfies $T_r^{\theta(r)}\notin\A(X)$.
\item[(v)] For all $r\in\mathbb N_0$ the operator $U_r\in\SK_{\theta(r+1)+1}(X)$ satisfies $U_r^{\theta(r)}\notin\A(X)$.
\end{enumerate}
\textbf{Step 4.} \emph{Claim.} For all $r\in\mathbb N_0$ the following hold:
\begin{enumerate}
\item[(a)] $\overline{\I_{r+1}}\subsetneq \overline{\I_r}$. 
\item[(b)] $\overline{\mathcal J_{r+1}}\subsetneq \overline{\mathcal J_r}$.
\end{enumerate}

Fix $r\in\mathbb N_0$ for the argument. For part (a) note first that Step 2 implies $\overline{\I_{r+1}}\subset \overline{\I_r}$. In order to obtain the strict inclusion, we will show that
\begin{equation}\label{2687}
T_r^{\theta(r)-1}\in \I_r\setminus\overline{\I_{r+1}}.
\end{equation}
Firstly, it is clear that $T_r^{\theta(r)-1}\in\I_r$. Next, towards a contradiction, assume that $T_r^{\theta(r)-1}\in\overline{\I_{r+1}}$. Let $(S_n)\subset\I_{r+1}$ be a sequence such that $||S_n-T_r^{\theta(r)-1}||\to 0$ as $n\to\infty$. It follows that
\begin{equation}\label{U_m}
||S_nT_r-T_r^{\theta(r)}||=||(S_n-T_r^{\theta(r)-1})T_r||\le ||S_n-T_r^{\theta(r)-1}||\,||T_r||\to 0
\end{equation}
as $n\to\infty$.
By linearity and the multiplication rule \eqref{powermultiplication} we have $S_n\in \I_{r+1}\subset\QN_s(X)$  for every $n\in\mathbb N$, where $s:=\frac{\theta(r+2)+1}{\theta(r+1)-1}$. Moreover, since $T_r\in\QN_{\theta(r+1)+1}(X)$, it follows from \eqref{multiplication} that $S_nT_r\in\QN_t(X)$ for every $n\in\mathbb N$, where $\frac{1}{t}=\frac{1}{s}+\frac{1}{\theta(r+1)+1}$. Here $t< 2$, which is seen from the following computation (recall that $\theta(r+1)=2\cdot \theta(r)$ for all $r\in\mathbb N_0$):
\begin{align*}
\frac{1}{t}&=\frac{\theta(r+1)-1}{\theta(r+2)+1}+\frac{1}{\theta(r+1)+1}>\frac{\theta(r+1)-1}{\theta(r+2)+2}+\frac{1}{\theta(r+1)+1}\\
&=\frac{\theta(r+1)-1}{\theta(r+2)+2}+\frac{2}{\theta(r+2)+2}=\frac{\theta(r+1)+1}{2(\theta(r+1)+1)}=\frac{1}{2}.
\end{align*}
Consequently, $S_nT_r\in\A(X)$ for every  $n\in\mathbb N$ by \eqref{QNple2}. It follows from \eqref{U_m} that $T_r^{\theta(r)}\in\A(X)$, which is a contradiction in view of (iv). Thus \eqref{2687} holds, which yields the claim.
\smallskip

For part (b) observe first that Step 2 implies $\overline{\mathcal J_{r+1}}\subset\overline{\mathcal J_r}$. Now, the strict inclusion follows from the fact that
\[U_r^{\theta(r)-1}\in\mathcal J_r\setminus\overline{\mathcal J_{r+1}}.\]
This is shown in the same way as \eqref{2687} by using the corresponding multiplication rules \eqref{multip} and \eqref{powermultiSKp} for the classes $\SK_r$. We thus we omit the details.
\smallskip

By Steps 1, 3 and 4 and Fact \ref{090909} the diagram in \eqref{eq:29} holds. It is then straightforward to verify, by applying (BOI3), that also the strict inclusions in \eqref{09014} and \eqref{09013} hold. In fact, suppose for instance, that $\overline{\J_{m+1}}^{\V_{\SK_p}}\subsetneq\overline{\J_{m}}^{\V_{\SK_p}}$ does not hold for some $m\in\mathbb N$. This means in view of \eqref{eq:230} that $\overline{\J_{m+1}}^{\V_{\SK_p}}=\overline{\J_{m}}^{\V_{\SK_p}}$. But then $\overline{\J_{m+1}}=\overline{\J_m}$ since $||\cdot||\le ||\cdot||_{\SK_p}$ by (BOI3) which contradicts Step 4. The other strict inclusions in \eqref{09014} and \eqref{09013} are verified in the same way.
\smallskip

Finally, we show in Steps 5--7 that the two chains of closed ideals in \eqref{eq:29} are incomparable. 
\medskip

\textbf{Step 5.} \emph{Claim.} For all $r\in\mathbb N$ the following hold:
\begin{enumerate}
\item[(a)] $\overline{\I_r}\not\subset\overline{\SK_p(X)}$. 
\item[(b)] $\overline{\mathcal J_r}\not\subset\overline{\QN_p(X)}$. 
\end{enumerate}
Fix $r\in\mathbb N$ for the argument. We claim that  $T_r^{\theta(r)-1}\in\I_r\setminus\overline{\SK_p(X)}$, which then implies part (a). Clearly $T_r^{\theta(r)-1}\in\I_r$. Next, assume towards a contradiction that $T_r^{\theta(r)-1}\in\overline{\SK_p(X)}$. It follows by Lemma \ref{cor:minimal}.(i) that
\[T_r^{\theta(r)}=T_r^{\theta(r)-1}\cdot T_r\in\overline{\F(X)}^{\V_{\QN_{\theta(r+1)+1}}}\subset\A(X).\]
But this contradicts (iv). 

For part (b) it suffices to verify that $U_r^{\theta(r)-1}\in\mathcal J_r\setminus\overline{\QN_p(X)}$. Again, clearly $U_r^{\theta(r)-1}\in\mathcal J_r$. Next, assume on the contrary that $U_r^{\theta(r)-1}\in\overline{\QN_p(X)}$. By Lemma \ref{cor:minimal}.(ii) we have \[U_r^{\theta(r)}=U_r\cdot U_r^{\theta(r)-1}\in\overline{\F(X)}^{\V_{\SK_{\theta(r+1)+1}}}\subset\A(X),\] which contradicts (v). 
\medskip

\textbf{Step 6.} \emph{Claim.} 
\begin{enumerate}
\item[(a)] $\overline{\QN_p(X)}\not\subset\overline{\SK_p(X)}$.
\item[(b)] $\overline{\SK_p(X)}\not\subset\overline{\QN_p(X)}$.
\end{enumerate}

Both claims are direct consequences of Steps 1, 2 and 5. In fact, for (a), assume towards a contradiction that $\overline{\QN_p(X)}\subset\overline{\SK_p(X)}$. Then Steps 1 and 2 implies $\overline{\I_1}\subset\overline{\SK_p(X)}$, which contradicts part (a) of Step 5. A similar argument yields part (b).
\medskip

\textbf{Step 7.} \emph{Claim.} For all $r,s\in\mathbb N$ the following hold:
\begin{enumerate}
\item[(a)] $\overline{\I_r}\not\subset\overline{\mathcal J_s}$.
\item[(b)] $\overline{\mathcal J_s}\not\subset\overline{\I_r}$.
\end{enumerate}

Here also the claims follow immediately from Steps 1, 2 and 5. For part (a), assume  that $\overline{\I_r}\subset\overline{\mathcal J_s}$ for some $r,s\in\mathbb N$. Then Steps 1 and 2 imply $\overline{\I_r}\subset\overline{\SK_p(X)}$, which contradicts part (a) of Step 5. Again, a similar argument yields part (b). 
\end{proof}

We conclude this section with some additional observations.
\begin{remarks}
(i) In the case $2<p\le 4$ of Theorem \ref{2083} one obtains two chains of 2-nilpotent closed ideals of $\QA_X$, since here \[index(\overline{\QN_p(X)}/\A(X))=index(\overline{\SK_p(X)}/\A(X))=2\] 
by Lemma \ref{16120}.
\smallskip

(ii) Suppose that $1\le p\le 4$ and $p\neq 2$. Then any closed subspace of $\QA_X^{\QN_p}$ and of $\QASK$ is automatically a closed ideal since the quotient algebras $\QAQN$ and $\QASK$ are 2-nilpotent by Proposition \ref{prop41}.
\end{remarks}

\section{Concluding examples and problems}\label{section6}

In the final section of this paper we exhibit a construction of a Banach space $Z$ for which the quotient algebra $\QA_Z^\I$ carries an uncountable family of closed ideals. This is a variant of the construction in \cite[Theorem 4.5]{TW22} for the case $\I=\K$. Here the setting is rather general since we only require that the Banach operator ideal $\I=(\I,\VI)$ is contained in the ideal of the compact operators $\K=(\K,\V)$ and that the $\I$-AP fails a certain duality property.
We also draw attention to duality problems related to the Banach operator ideals $\QN_p=(\QN_p,\V_{\QN_p})$ and $\SK_p=(\SK_p,\V_{\SK_p})$, and the corresponding quotient algebras $\QAQN$ and $\QASK$ for $p\neq 2$.
\smallskip
 
We proceed with the example where $\QA_Z^\I$ carries an uncountable family of closed ideals for a specific class of Banach operator ideals $\I=(\I,\VI)$ such that $\I\subset\K$. In fact, we will require the existence of Banach spaces $X$ and $Y$ such that $X$ has the $\I$-AP and $\I(X,Y)\neq\overline{\F(X,Y)}^{\VI}$. We note that the latter condition typically concerns the failure of the $\I^{dual}$-AP of the dual space $X^*$. In fact, let $X$ be a Banach space and suppose that $\I=(\I,\VI)$ is a Banach operator ideal for which $\I=(\I^{dual})^{dual}$. Then one may verify (cf. \cite[Proposition 4.9]{Kim19}) that the following holds:
\begin{equation*}
X^*\text{ has the }\I^{dual}\text{-AP if and only if }\I(X,Y)=\overline{\F(X,Y)}^{\VI}\text{ for every dual space }Y.
\end{equation*}

\begin{ex}\label{infinitelymanyideals}
Fix $1< p<\infty $ and let $\I=(\I,\VI)$ be a Banach operator ideal such that $\I\subset\K$. Suppose that the Banach spaces $X$ and $Y$ satisfy the following two conditions: 
\begin{enumerate}[(i)] 
\item $X$ has the $\I$-AP.

\item $\I(X,Y)\neq \overline{\F(X,Y)}^{\VI}$. 
\end{enumerate}
Set $X_0=Y$ and $X_n=X$ for all $n\in\mathbb N$ and consider the direct $\ell^p$-sum $Z:=\big(\bigoplus_{n=0}^\infty X_n\big)_{\ell^p}$. For each non-empty subset $A\subset\mathbb N$ we define
\[\I_A:=\left\{U \in \I(Z)\mid P_0UJ_k \in \overline{\F(X_k,X_0)}^{\VI} \text{ for all } k \in A\cup\{0\}\right\},\]
where $J_k:X_k\to Z$ is the natural isometric embedding and $P_k:Z\to X_k$ is the natural projection for each $k\in \mathbb N_0=\mathbb N\cup\{0\}$. 
\smallskip

\emph{Claim:} The family $\{\I_A\mid \emptyset\neq A\subset\mathbb N\}$ is an uncountable family of closed ideals of $\I(Z)$ that has the reverse partial order structure of the power set $(\mathcal P(\mathbb N),\subset)$ of the natural numbers $\mathbb N$.
\smallskip

Consequently, by Proposition \ref{quotient} the family $\{q(\I_A)\mid \emptyset\neq A\subset\mathbb N\}$ of closed ideals in the quotient algebra $\QA_Z^\I$ is uncountable and has the reverse partial order structure of the power set $(\mathcal P(\mathbb N),\subset)$. Here $q:\I(Z)\to \QA_Z^\I$ denotes the quotient map.
\smallskip

We establish the claim in four steps.
\smallskip

\textbf{Step 1.} \emph{Claim.} $\I_A$ is a closed subspace of $\I(Z)$ for every non-empty subset $A\subset \mathbb N$. 
\smallskip

Towards this,  let $\emptyset\neq A\subset \mathbb N$. Clearly $\I_A$ is linear subspace of $\I(Z)$. To verify that $\I_A=\overline{\I_A}^{\VI}$, suppose that $T\in\overline{\I_A}^{\VI}$. Let $\varepsilon>0$ be arbitrary and suppose that $k\in A\cup\{0\}$. By assumption, there is an operator $S\in\I_A$ such that $||T-S||_\I<\varepsilon$. Then $P_0SJ_k\in\overline{\F(X_k,X_0)}^{\VI}$ and since
\[
||P_0TJ_k-P_0SJ_k||_\I\le ||T-S||_\I<\varepsilon
\]
by (BOI2), it follows that $P_0TJ_k\in \overline{\F(X_k,X_0)}^{\VI}$. Thus $T\in\I_A$.
\medskip

\textbf{Step 2.} \emph{Claim.} $VU\in \I_A$ for every $U\in \I_A$ and $V\in \I(Z)$.
\medskip

Let $k\in A\cup\{0\}$ be fixed and suppose that $U\in \I_A$ and $V\in\I(Z)$. We first observe that for each $N\in\mathbb N$ we have
\begin{equation}\label{rre}
\sum_{n=0}^N P_0VJ_nP_nUJ_k\in\overline{\F(X_k,X_0)}^{\VI}.
\end{equation}
In fact, $P_0UJ_k\in\overline{\F(X_k,X_0)}^{\VI}$ since $U\in \I_A$. Moreover, $P_n UJ_k\in \overline{\F(X_k,X_n)}^{\VI}$ for all $n\in\{1,\ldots,N\}$, since in this case $X_n=X$ and $X$ has the $\I$-AP by assumption (i). Thus \eqref{rre} holds by the operator ideal property.

Next, note that $\sum_{n=0}^N J_nP_n\to I_Z$ pointwise as $N\to\infty$ and thus 
\begin{equation}\label{622022}
\lim_{N\to\infty}||U-\sum_{n=0}^N J_nP_n U||=0\end{equation} 
since $U\in\I(Z)\subset\K(Z)$.
Consequently, we have
\begin{align*}
||P_0VUJ_k-\sum_{n=0}^N P_0VJ_nP_nUJ_k||_\I&=||P_0V(U-\sum_{n=0}^N J_nP_nU)J_k||_\I\\
&\le ||U-\sum_{n=0}^N J_nP_n U||\cdot ||V||_\I\to 0
\end{align*}
as $N\to\infty$ by (BOI2). Thus $P_0VUJ_k\in\overline{\F(X_k,X_0)}^{\VI}$ by \eqref{rre}, which then yields that $VU\in I_A$.
\medskip

\textbf{Step 3.} \emph{Claim.} $UV\in \I_\mathbb N$ for every $U\in \I_A$ and $V\in \I(Z)$. In particular, $UV\in \I_A$.
\medskip

The claim follows from a similar argument as in Step 2. Here we have for a fixed $k\in\mathbb N_0$ and all $N\in\mathbb N$ that 
\begin{equation}\label{63220_a}
\sum_{n=0}^N P_0UJ_nP_nVJ_k\in \overline{\F(X_k,X_0)}^{\VI}.
\end{equation}
In fact, since $U\in\I_A$, we have $P_0UJ_0\in\overline{\F(X_0)}^{\VI}$. Moreover, since $V\in\I(Z)$ we have $P_nVJ_k\in\overline{\F(X_k,X_n)}^{\VI}$ for all $n\in\{1,\ldots,N\}$ by the operator ideal property and assumption (i). 

Now, as in \eqref{622022} we have that $||V-\sum_{n=0}^N J_nP_nV||\to 0$ as $N\to\infty$, since $V$ is a compact operator. Thus
\begin{align*}
||P_0UVJ_k-\sum_{n=0}^N P_0UJ_nP_nVJ_k||_\I&=||P_0U(V-\sum_{n=0}^N J_nP_nV)J_k||_\I\\
&\le ||V-\sum_{n=0}^N J_nP_n V||\cdot ||U||_\I\to 0
\end{align*}
as $N\to\infty$. It follows from \eqref{63220_a} that $P_0UVJ_k\in\overline{\F(X_k,X_0)}^{\VI}$, which yields the claim.
\medskip

\textbf{Step 4}. \emph{Claim.} For all non-empty subsets $A,B\subset\mathbb N$ the following holds:
\[A\subset B\text{ if and only if }\I_B\subset \I_A.\]
Let $A$ and $B$ be non-empty subsets of $\mathbb N$. Clearly the forward implication holds. For the converse implication, assume that $\I_B\subset \I_A$. Towards a contradiction, assume $A\not\subset B$ so that there is an element $r\in A\setminus B$. Then, by assumption (ii), there is an operator \[T\in\I(X_r,X_0)\setminus\overline{\F(X_r,X_0)}^{\VI}.\] Consider the operator $U:=J_0TP_r\in \I(Z)$. For a fixed $k\in B\cup\{0\}$ note that \[P_0UJ_k=P_0J_0TP_rJ_k=TP_rJ_k=0\] since $r\neq k$. Consequently, $U\in \I_B$. Moreover, $P_0UJ_r=T\notin\overline{\F(X_r,X_0)}^{\VI}$, which implies that $U\notin \I_A$. Thus $U\in\I_B\setminus\I_A$, which contradicts the initial assumption $\I_B\subset\I_A$. Consequently, $A\subset B$.  
\medskip

By Steps 1--3, the subset $\I_A$ is a closed ideal of $\I(Z)$ for every $\emptyset\neq A\subset\mathbb N$. Moreover, by Step 4 the family 
$\{\I_A\mid \emptyset\neq A\subset\mathbb N\}$ is uncountable and has the reverse partial order structure of $(\mathcal P(\mathbb N),\subset)$.
\end{ex}
\begin{remarks}\label{remarks15}
(i) The case $\I=\K$ of Example \ref{infinitelymanyideals} was established in \cite[Theorem 4.5]{TW22}. In that case one obtains Banach spaces $X$ and $Y$ that satisfy the conditions (i) and (ii) by considering a Banach space $X$ that has the AP for which the dual $X^*$ fails the AP (recall that such spaces exist, see \cite[Theorem 1.e.7(b)]{LT77}). According to \cite[Theorem 1.e.5]{LT77}, there is a  Banach space $Y$ such that $\K(X,Y)\neq\A(X,Y)$. Note here that the Banach space $X$ is necessarily non-reflexive by the duality property of the AP (see \cite[Theorem 1.e.7(a)]{LT77}), and consequently, also the direct $\ell^p$-sum $Z=(Y\oplus X\oplus X\oplus\cdots)_{\ell^p}$ in Example \ref{infinitelymanyideals} is non-reflexive.

Moreover, in the case $\I=\K$ the ideals $\I_A$ in Example \ref{infinitelymanyideals} have additional features as was observed in \cite{TW22}. For instance, the ideals $\I_A$ are left ideals of the bounded operators $\mathcal L(Z)$, since the claim of Step 2 holds for an arbitrary $V\in\mathcal L(Z)$ if $\V_{\I}=\V$. In addition, it follows from \cite[Lemma 4.6]{TW22} that $\I_{\mathbb N}=\A(X)$ and thus the closed ideals $q(\I_A)$ in $\QA_X$ are 2-nilpotent by Step 3. Here $q$ denotes the quotient map from $\K(X)$ onto $\QA_X$.
\smallskip

(ii) In contrast to the closed ideals found in Section \ref{section5}, the ideals $\I_A$ in Example \ref{infinitelymanyideals} are not algebraic ideals of the bounded operators $\mathcal L(Z)$ for any $\emptyset \neq A\subsetneq\mathbb N$, which can be seen with a similar argument as in \cite[Remark 4.8]{TW22}. In fact, for such a subset $A$, pick $r\in\mathbb N\setminus A$ and $s\in A$. By assumption (ii) in Example \ref{infinitelymanyideals}, there is an operator \[U\in\I(X_r,X_0)\setminus\overline{\F(X_r,X_0)}^{\VI}.\] 
Let $T:=J_0UP_r$ so that $T\in \I_A$ since $r\notin A\cup\{0\}$. Next, let $I_{sr}$ (respectively, $I_{rs}$) denote the identity operator on $X$ considered as an operator $X_s\to X_r$ (respectively, $X_r\to X_s$).  We claim that $TV\notin \I_A$ for the operator $V:=J_rI_{sr}P_s\in\mathcal L(Z)$, which shows that $\I_A$ is not an ideal of $\mathcal L(Z)$. 

For this, note that it suffices (by the definition of the ideal $\I_A$) to show that $P_0TVJ_s\notin\overline{\F(X_s,X_0)}^{\VI}$. But this holds by the operator ideal property since 
\[P_0TVJ_sI_{rs}=P_0(J_0UP_r)(J_rI_{sr}P_s)J_sI_{rs}=UI_{sr}I_{rs}=U\notin \overline{\F(X_r,X_0)}^{\VI}.\] 
Consequently, $TV\notin \I_A$ and thus $\I_A$ is not an ideal of $\mathcal L(Z)$.
\end{remarks}

In the next example we observe for any $2<p<\infty$ that there are Banach spaces $X$ and $Y$ such that the conditions (i) and (ii) in Example \ref{infinitelymanyideals} are satisfied for $\I=\QN_p$. Here both spaces $X$ and $Y$ can be chosen to be reflexive (in contrast to the case $\I=\K$, see Remarks \ref{remarks15}.(i)).
\begin{ex}
Suppose that $2<p<\infty$ and let $M\subset\ell^p$ be a closed subspace that fails the $\SK_p$-AP, see Example \ref{Ex:34}.(i). Consider the dual space $X:=M^*$. Since $M$ has type 2, the dual space $X$ has cotype 2, see e.g. \cite[Proposition 11.10]{DJT95}, and thus $X$ has the $\QN_p$-AP by Proposition \ref{QNpcotype2}. Moreover, since $M=X^*$ fails the $\SK_p$-AP, there is according to Fact \ref{1612} a Banach space $Y$ and an operator $T\in\QN_p(X,Y)\setminus\overline{\F(X,Y)}^{\V_{\QN_p}}$. By applying \cite[Lemma 5]{PP69} and the operator ideal property, we may assume that  $Y$ is a closed subspace of $\ell^p$. Consequently, since $X$ has the $\QN_p$-AP and $\QN_p(X,Y)\neq\overline{\F(X,Y)}^{\V_{\QN_p}}$, the reflexive Banach spaces $X$ and $Y$ satisfy conditions (i) and (ii) of Example \ref{infinitelymanyideals}. Note that the direct $\ell^p$-sum $Z=(Y\oplus X\oplus X\oplus X\cdots)_{\ell^p}$ in Example \ref{infinitelymanyideals} is then also reflexive.
\end{ex}
We conclude this paper with a few observations and problems related to the duality between the quotient algebras $\QA_X^{\QN_p}$ and $\QA_{X^*}^{\SK_p}$ as well as between $\QA_X^{\SK_p}$ and $\QA_{X^*}^{\QN_p}$ for $p\neq 2$. For this, we consider the following natural mappings for any Banach space $X$ and $p\in[1,\infty)\setminus\{2\}$:
\begin{align}
\label{PHI}&\Phi_X:\QA_X^{\QN_p}\to \QA_{X^*}^{\SK_p},\quad \Phi_X\big(T+\overline{\F(X)}^{\V_{\QN_p}}\big)= T^*+\overline{\F(X^*)}^{\V_{\SK_p}},\\
\label{PSI}&\Psi_X:\QA_X^{\SK_p}\to \QA_{X^*}^{\QN_p},\quad \Psi_X\big(T+\overline{\F(X)}^{\V_{\SK_p}}\big)= T^*+\overline{\F(X^*)}^{\V_{\QN_p}}.
\end{align}
Due to the dualities \eqref{911} and \eqref{9112}, these mappings are well-defined bounded linear antihomomorphisms, by which we mean that the mappings $\Phi_X$ and $\Psi_X$ reverse products. In fact, \[\Phi_X(ST+\overline{\F(X)}^{\V_{\QN_p}})=T^*S^*+\overline{\F(X^*)}^{\V_{\SK_p}}\] for all $S,T\in\QN_p(X)$ since $(ST)^*=T^*S^*$, and the analogue holds for the mapping $\Psi_X$.

By applying the dualities \eqref{911} and \eqref{9112}, it is easy to verify that if $X$ is a reflexive Banach space, then both mappings $\Phi_X$ and $\Psi_X$ are in addition isometric bijections. However, this is not always the case for non-reflexive Banach spaces and the following observation yields a non-reflexive Banach space $X$ for which the mappings $\Phi_X$ and $\Psi_X$ are not surjective.

\begin{prop}\label{prop64}
Suppose that $1\le p<\infty$ and $p\neq 2$. There exists a non-reflexive Banach space $W$ such that $\QA_W^{\QN_p}=\{0\}$ and $\QA_W^{\SK_p}=\{0\}$, but for which  $\QA_{W^*}^{\SK_p}\neq\{0\}$ and $\QA_{W^*}^{\QN_p}\neq\{0\}$.
\end{prop}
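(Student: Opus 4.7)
The plan is first to argue that $W$ must necessarily be non-reflexive, and then to outline a construction of $W$ as a non-reflexive Banach space whose dual realises as an $\ell^1$-direct sum of reflexive spaces produced from the machinery of Section~\ref{section3}.

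The non-reflexivity of $W$ is forced: if $W$ were reflexive, Schauder's theorem together with the isometric dualities \eqref{911} and \eqref{9112} would make $\Phi_W$ and $\Psi_W$ from \eqref{PHI} and \eqref{PSI} isometric bijections, forcing $\QA_W^{\QN_p}=\{0\}\Leftrightarrow \QA_{W^*}^{\SK_p}=\{0\}$ and $\QA_W^{\SK_p}=\{0\}\Leftrightarrow \QA_{W^*}^{\QN_p}=\{0\}$, contradicting the desired conclusion.

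For the construction, I would proceed as follows. Using Example~\ref{Ex:33}(i), Example~\ref{Ex:34}(i), the factorisation lemma of \cite[Lemma~5]{PP69}, and the dualities \eqref{911} and \eqref{9112}, fix for the given $p\neq 2$ reflexive subspaces $X_j, Y_j\subset\ell^p$ together with operators
\[
T_1\in\QN_p(Y_1,X_1)\setminus\overline{\F(Y_1,X_1)}^{\V_{\QN_p}}, \qquad T_2\in\SK_p(Y_2,X_2)\setminus\overline{\F(Y_2,X_2)}^{\V_{\SK_p}}.
\]
Define
\[
W := \Bigl(X_1^* \oplus Y_1^* \oplus Y_1^* \oplus \cdots\Bigr)_{c_0}\oplus \Bigl(X_2^* \oplus Y_2^* \oplus Y_2^* \oplus \cdots\Bigr)_{c_0},
\]
which is non-reflexive, and whose dual by reflexivity of the summands equals
\[
W^* = \Bigl(X_1 \oplus Y_1 \oplus Y_1 \oplus \cdots\Bigr)_{\ell^1}\oplus \Bigl(X_2 \oplus Y_2 \oplus Y_2 \oplus \cdots\Bigr)_{\ell^1}.
\]
Applying the argument of Example~\ref{infinitedimensional} with the $\ell^1$-sum (which the proof accommodates) and using the operators $T_1$ and $T_2$ on the first and second $\ell^1$-factors respectively yields that both $\QA_{W^*}^{\QN_p}$ and $\QA_{W^*}^{\SK_p}$ are infinite-dimensional, in particular non-zero.

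The main technical obstacle is to verify simultaneously that $\QA_W^{\QN_p}=\{0\}$ and $\QA_W^{\SK_p}=\{0\}$. Via Facts~\ref{1612} and~\ref{16122} this reduces to dual approximation conditions on the summands $X_j, Y_j$, which in turn must be secured by sharpening the choices within Example~\ref{Ex:33}(i) and Example~\ref{Ex:34}(i) through the cotype arguments of Proposition~\ref{QNpcotype2} and Proposition~\ref{SKp}. The crux is then an AP-stability lemma: namely, that the $c_0$-direct sum of reflexive Banach spaces each possessing both the $\QN_p$-AP and the $\SK_p$-AP retains both approximation properties. The subtlety exploited throughout is that, because $W$ is non-reflexive and the canonical embedding $W\hookrightarrow W^{**}$ is strict, the non-approximable $\QN_p$- and $\SK_p$-operators visible on $W^*$ arise from the dual pairing with elements of $W^{**}\setminus W$ and are therefore absent from $W$ itself.
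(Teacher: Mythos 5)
Your opening observation that $W$ is forced to be non-reflexive is fine, but the proposed construction has a genuine gap at exactly the point you flag as the ``main technical obstacle.'' Your plan requires an AP-stability lemma to the effect that a $c_0$-direct sum of spaces each having the $\QN_p$-AP and $\SK_p$-AP again has both properties. No such lemma appears in the paper, and it is not at all clear that it holds: if $T\in\QN_p(Z,W)$ with $W=(\bigoplus E_n)_{c_0}$, then compactness gives $\sum_{n\le N}J_nP_nT\to T$ in the operator norm $\V$, but the $\QN_p$-AP demands convergence of finite-rank approximants in the strictly stronger norm $\V_{\QN_p}$, and the tail estimate in $\V_{\QN_p}$ is not controlled. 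The classical AP enjoys good stability under $c_0$-sums because there one really is working with the uniform norm; for $\I$-APs with $\VI\succeq\V$ this is a different matter. Moreover, even before the stability question, you would have to secure that each dual summand $X_j^*$, $Y_j^*$ has \emph{both} the $\QN_p$-AP and the $\SK_p$-AP. For $p>2$ the cotype-$2$ argument (Proposition~\ref{QNpcotype2}) gives the $\QN_p$-AP of $X_j^*$, but nothing in the paper's toolkit yields the $\SK_p$-AP of $X_j^*$ for $p>2$; the dual-cotype result (Proposition~\ref{SKp}) only covers $1\le p<2$. Finally, the closing sentence about operators ``arising from the dual pairing with $W^{**}\setminus W$'' is an intuition, not an argument.

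The paper avoids all of these difficulties with a single move you do not consider: it invokes the James--Lindenstrauss construction to produce a space $Z$ with $Z^{**}$ possessing a Schauder basis and $Z^{**}/Z\approx X^*$, where $X$ is a separable reflexive space realising $\QA_X^{\QN_p}\neq\{0\}$ and $\QA_X^{\SK_p}\neq\{0\}$ (Lemma~\ref{finallemma}). Setting $W:=Z^{**}$ makes $W$ have the classical AP outright, so $\QA_W^{\QN_p}=\QA_W^{\SK_p}=\{0\}$ is immediate from Remarks~\ref{9113}(i) — no stability lemma and no cotype bookkeeping required. At the same time $W^*=Z^{***}\approx Z^*\oplus X$ contains $X$ complemented, so the operator ideal property transports the nontrivial elements of $\QAQN$ and $\QASK$ into $\QA_{W^*}^{\QN_p}$ and $\QA_{W^*}^{\SK_p}$. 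If you want to salvage your route, you would have to prove the $c_0$-sum stability lemma and verify the approximation properties of the dual summands for all $p\ne 2$; both steps are substantial and, as far as the present paper's machinery goes, unavailable. Switching to the James--Lindenstrauss argument is much cleaner.
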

\begin{proof}
 Let $X$ be a separable reflexive Banach space such that 
\begin{equation}\label{222}
\QA_{X}^{\QN_p}\neq \{0\}\quad\text{ and }\quad\QASK\neq\{0\}.
\end{equation}
The construction of Reinov in \cite[Lemma 1.1]{Reinov82} together with the dualities displayed in \eqref{911} and \eqref{9112} ensure that such a Banach space exists, and we will verify this separately in Lemma \ref{finallemma} below. According to the James-Lindenstrauss construction (see \cite[Theorem 1.d.3]{LT77}), there is a Banach space $Z$ such that $Z^{**}$ has a Schauder basis and $Z^{**}/Z\approx X^*$. 
It follows that $Z^{***}\approx Z^*\oplus X$ since $X$ is reflexive.

Consider the bidual $W:=Z^{**}$. Since $W$ has the AP, it has both the $\QN_p$-AP and the $\SK_p$-AP (see Remarks \ref{9113}.(i)). Consequently, $\QA_{W}^{\QN_p}=\{0\}$ and $\QA_W^{\SK_p}=\{0\}$. On the other hand, there are operators $T\in\QN_p(X)\setminus\overline{\F(X)}^{\V_{\QN_p}}$ and $U\in\SK_p(X)\setminus\overline{\F(X)}^{\V_{\SK_p}}$ by \eqref{222}. Since $W^*=Z^{***}\approx Z^*\oplus X$, the following hold by the operator ideal property:
\begin{align*}
&\theta^{-1}J_XTP_X\theta\in\QN_p(W^*)\setminus\overline{\F(W^*)}^{\V_{\QN_p}},\\
&\theta^{-1}J_XUP_X\theta\in\SK_p(W^*)\setminus\overline{\F(W^*)}^{\V_{\SK_p}},
\end{align*}
where $J_X:X\to Z^*\oplus X$ and $P_X:Z^*\oplus X\to X$ are the natural maps, and $\theta:W^*\to Z^*\oplus X$ is a linear isomorphism. Consequently, $\QA_{W^*}^{\QN_p}\neq\{0\}$ and $\QA_{W^*}^{\SK_p}\neq\{0\}$. 
\end{proof}
In Proposition \ref{prop64} above we used the fact that there is a separable reflexive Banach space $X$ that satisfies \eqref{222}. We verify this in the following lemma.  
\begin{lem}\label{finallemma}
Suppose that $1\le p<\infty$ and $p\neq 2$. Then there is a separable reflexive Banach space $X$ such that $\QA_X^{\QN_p}\neq\{0\}$ and $\QA_X^{\SK_p}\neq\{0\}$.
\end{lem}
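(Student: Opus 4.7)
The plan is to use Reinov's construction in \cite[Lemma 1.1]{Reinov82} to obtain, for every $p\in[1,\infty)\setminus\{2\}$, separable reflexive Banach spaces $Y$ and $Z$ together with an operator
\[
S\in\QN_p(Y,Z)\setminus\overline{\F(Y,Z)}^{\V_{\QN_p}},
\]
and to combine $S$ with its adjoint inside a suitable reflexive direct sum. For $2<p<\infty$ this is the same reflexive witness as the one already invoked in the proof of Proposition \ref{prop43}; for $1\le p<2$ the same construction still produces reflexive separable $Y,Z$, even though $S$ is then necessarily approximable in operator norm by \eqref{QNple2}.

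First I would promote $S$ to an $\SK_p$-witness on the duals. By the duality \eqref{911}, $S^*\in\SK_p(Z^*,Y^*)$, and I claim $S^*\notin\overline{\F(Z^*,Y^*)}^{\V_{\SK_p}}$. Indeed, if $(F_n)\subset\F(Z^*,Y^*)$ satisfied $\|F_n-S^*\|_{\SK_p}\to 0$, then by reflexivity $F_n^*\in\F(Y^{**},Z^{**})=\F(Y,Z)$, and applying \eqref{911} to $F_n-S^*$ together with the identification $S^{**}=S$ yields
\[
\|F_n^*-S\|_{\QN_p}=\|F_n-S^*\|_{\SK_p}\longrightarrow 0,
\]
contradicting the choice of $S$. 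Next I would assemble $X:=Y\oplus Z\oplus Z^*\oplus Y^*$, equipped with, say, the $\ell^2$-direct sum norm; since $Y$ and $Z$ are separable reflexive, so are their duals, and hence $X$ is separable reflexive.

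Finally, letting $J_W$ and $P_W$ denote the canonical embedding and projection associated with each summand $W\in\{Y,Z,Z^*,Y^*\}$ of $X$, I would set
\[
T:=J_Z S P_Y\in\QN_p(X)\qquad\text{and}\qquad U:=J_{Y^*}S^*P_{Z^*}\in\SK_p(X)
\]
(both memberships follow from (BOI2)). Since $P_Z T J_Y=S$ and $P_{Y^*}U J_{Z^*}=S^*$, an application of (BOI2) together with the two non-membership statements above forces $T\notin\overline{\F(X)}^{\V_{\QN_p}}$ and $U\notin\overline{\F(X)}^{\V_{\SK_p}}$, whence $\QA_X^{\QN_p}\neq\{0\}$ and $\QA_X^{\SK_p}\neq\{0\}$, as required. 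The delicate point of the argument is the uniform availability of Reinov's construction across the whole range $p\neq 2$: for $p>2$ one has the strong conclusion $S\notin\A(Y,Z)$, whereas for $1\le p<2$ one must extract from the same \cite[Lemma 1.1]{Reinov82} only the strictly weaker assertion $S\notin\overline{\F(Y,Z)}^{\V_{\QN_p}}$, while still guaranteeing that $Y$ and $Z$ are separable reflexive; once that is granted, the duality-plus-direct-sum argument above handles both quotient algebras simultaneously.
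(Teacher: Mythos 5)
Your proposal follows essentially the same route as the paper: obtain Reinov's reflexive separable witnesses $Y,Z$ with an operator $S\in\QN_p(Y,Z)\setminus\overline{\F(Y,Z)}^{\V_{\QN_p}}$, dualize to get an $\SK_p$-witness $S^*$ on $(Z^*,Y^*)$, and plant both inside the reflexive $\ell^2$-sum of the four spaces via the operator ideal property. The only small discrepancy is bibliographic: the paper invokes \cite[Lemma 1.1 \emph{and} Corollary 1.1]{Reinov82} (together with the identity $\overline{\F}^{\V_{\QN_p}}=\overline{\F}^{\V_{\varPi_p}}$ from \eqref{2012}) to guarantee the non-approximability in the $\QN_p$-norm for the full range $p\neq 2$, whereas you cite only Lemma 1.1; you should add the Corollary since, as you yourself observe, Lemma 1.1's conclusion $S\notin\A$ cannot hold when $1\le p<2$, and it is the Corollary that supplies the $\V_{\varPi_p}$-norm statement in that range.
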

\begin{proof}
According to \cite[Lemma 1.1 and Corollary 1.1]{Reinov82}, there are separable reflexive Banach spaces $E$ and $F$ such that $\QN_p(E,F)\neq \overline{\F(E,F)}^{\V_{\varPi_p}}$. Since $\overline{\F}^{\V_{\QN_p}}=\overline{\F}^{\V_{\varPi_p}}$ (see \eqref{2012}) there is an operator $T\in\QN_p(E,F)\setminus\overline{\F(E,F)}^{\V_{\QN_p}}$. By reflexivity and the dualities \eqref{911} and \eqref{9112}, we have $T^*\in\SK_p(F^*,E^*)\setminus\overline{\F(F^*,E^*)}^{\V_{\SK_p}}$.

Consider the separable and reflexive direct $\ell^2$-sum $X:=(E\oplus F\oplus E^*\oplus F^*)_{\ell^2}$. Then $J_FTP_E\in\QN_p(X)\setminus\overline{\F(X)}^{\V_{\QN_p}}$ and $J_{E^*}T^*P_{F^*}\in\SK_p(X)\setminus\overline{\F(X)}^{\V_{\SK_p}}$ by the operator ideal property. This completes the proof. 
\end{proof}
In connection to Proposition \ref{prop64}, it appears unknown whether or not there is a non-reflexive Banach space $W$ (respectively, $Z$) such that $\QA_{W}^{\QN_p}\neq\{0\}$ and $\QA_{W^*}^{\SK_p}=\{0\}$ (respectively, $\QA_Z^{\SK_p}\neq\{0\}$ and $\QA_{Z^*}^{\QN_p}=\{0\}$) for $p\neq 2$. This suggests the following question related to the natural mappings displayed in \eqref{PHI} and \eqref{PSI}.
\begin{qu}\label{QUESTION}
Suppose that $1\le p<\infty$ and $p\neq 2$. Are the mappings $\Phi_X$ and $\Psi_X$ in \eqref{PHI} and \eqref{PSI} injective for all Banach spaces $X$ ?
\end{qu}
We note here that if, for instance, $\Psi_X$ is indeed always injective, then $\QA_{X^*}^{\QN_p}=\{0\}$ would imply $\QASK=\{0\}$. In this event Theorem \ref{trivialquotient}.(ii) would be a direct consequence of Theorem \ref{trivialquotient}.(i).

In connection to Question \ref{QUESTION}, we recall that a Banach operator ideal $\I=(\I,\VI)$ is called \emph{regular} if 
\[\I(X,Y)=\{T\in\mathcal L(X,Y)\mid j_Y T\in\I(X,Y^{**})\}\]
for all Banach spaces $X$ and $Y$, and $||T||_{\I}=||j_YT||_{\I}$ for all $T\in\I(X,Y)$, where $j_Y:Y\to Y^{**}$ is the canonical isometric embedding.
Due to the dualities \eqref{911} and \eqref{9112}, the following question on regularity is relevant for Question \ref{QUESTION}. 
\begin{qu}
Suppose that $1\le p<\infty$ and $p\neq 2$. Are the approximative kernels $\overline{\F}^{\V_{\QN_p}}$ and $\overline{\F}^{\V_{\SK_p}}$ regular Banach operator ideals ?
\end{qu}
It is known that some of the classical approximative Banach operator ideals are non-regular. For instance, the Banach operator ideal $\mathcal N_1=(\mathcal N_1,\V_{\mathcal N_1})$ of the 1-nuclear operators is not regular, which follows from a classical result of Figiel and Johnson \cite[Proposition 3]{FJ73}, see also \cite[8.3.8]{Pietsch80}. Moreover, results of Reinov in \cite[Section 3]{Reinov82} (see also \cite[Corollary 1.8]{ReinovReinov}) show that the Banach operator ideal $\N_p=(\N_p,\V_{\N_p})$ of the $p$-nuclear operators is not regular for any $1\le p<\infty$ and $p\neq 2$. We refer to to the paper \cite{ReinovReinov} for more results and discussions on (non-)regularity of classical approximative Banach operator ideals.

\section*{Acknowledgements} This work is part of the Ph.D.-thesis of the author and he would like to thank his supervisor Hans-Olav Tylli for suggesting the topic of this paper and for many valuable discussions.

This work was supported by the Magnus Ehrnrooth Foundation and the Swedish Cultural Foundation in Finland. 

\bibliographystyle{amsplain}

\bibliography{bibliographyWirzenius}
\end{document}